\documentclass{article}

\usepackage[utf8]{inputenc}
\usepackage[english]{babel}

\usepackage[left=2cm,right=2cm,top=2cm,bottom=2cm]{geometry}

\usepackage{cancel}
\usepackage{amstext}
\usepackage{amssymb}
\usepackage{amsmath}
\usepackage{amsfonts}
\usepackage{mathrsfs}
\usepackage{bm}
\usepackage{bbm}
\usepackage{nicefrac, xfrac}
\usepackage{upgreek}
\usepackage{booktabs}
\usepackage{xfrac}

\usepackage{xcolor}
\usepackage{color}

\usepackage{graphicx}
\usepackage{pgf,tikz,pgfplots}
\pgfplotsset{compat=1.15}
\usetikzlibrary{arrows, arrows.meta, calc, math, shapes, positioning}

\usepackage{wrapfig}

\usepackage[colorlinks=true]{hyperref}

\usepackage{ifpdf}

\usepackage[babel=true]{csquotes}
\MakeOuterQuote{"}
\usepackage{textcomp}

\usepackage{graphicx}
\usepackage{longtable}
\usepackage{caption}
\usepackage{subcaption}

\usepackage{svg}

\usepackage{url}
\usepackage{float}
\usepackage{multicol}

\usepackage[framed, amsthm, amsmath, thmmarks]{ntheorem}

\definecolor{mygray}{gray}{0.95}

\newcommand{\theoremcolorbox}[2]{\def\theoremframecommand{\fcolorbox{#1}{#2}}}
\theoremcolorbox{black}{mygray}
\newtheorem{theorem}{Theorem}[section]
\newtheorem{definition}[theorem]{Definition}
\newtheorem{corollary}[theorem]{Corollary}
\newtheorem{lemma}[theorem]{Lemma}
\newtheorem{proposition}[theorem]{Prop.}

\newcounter{customcondition}

\newtheorem{condition}[customcondition]{Condition}

\newshadedtheorem{theoremFramed}[theorem]{Theorem}
\newframedtheorem{corollaryFramed}[theorem]{Corollary}
\newframedtheorem{lemmaFramed}[theorem]{Lemma}
\newframedtheorem{propositionFramed}[theorem]{Prop.}
\newframedtheorem{conditionFramed}[theorem]{Condition}

\newtheorem{assumptionFunction}{Assumption, AF}
\newtheorem{assumptionAlgo}{Assumption, AA}
\newtheorem{assumptionModel}{Assumption, AM}

\theoremstyle{remark}
\newtheorem{remark}{Remark}

\newtheorem{example}{Example}
\newframedtheorem{exampleFramed}{Example}

\newtheorem{update}{Update Rule}
\newframedtheorem{updateFramed}{Update Rule}

\usepackage{enumerate}
\usepackage[shortlabels]{enumitem}

\numberwithin{equation}{section}

\usepackage[ruled,vlined]{algorithm2e}
\allowdisplaybreaks
\usepackage{algorithmicx, algpseudocode}
\usepackage[capitalize]{cleveref}

\newcommand{\rme}{{\mathrm{e}}}

\newcommand{\R}{{\mathbb{R}}} % reals
\newcommand{\NN}{{\mathbb{N}}} % natural numbers {1, 2, ...}
\newcommand{\RR}{{\mathbb{R}}} % reals
\newcommand{\reals}{\mathbb{R}}

\DeclareSymbolFont{bbold}{U}{bbold}{m}{n}
\DeclareSymbolFontAlphabet{\mathbbold}{bbold}

\newcommand{\calL}{{\mathcal{L}}}

\newcommand{\calO}{{\mathcal{O}}}

\newcommand{\abs}[1]{\left \vert #1 \right \vert}

\newcommand{\norm}[1]{\left \lVert #1 \right \rVert}

\newcommand*\Rp{\reals^p}
\newcommand*\expo[1]{\rme^{#1} }

\newcommand*\mykappa[1]{ \kappa_{\scriptscriptstyle \text{#1}} }
\newcommand*\myeta[1]{ \eta_{\scriptscriptstyle \text{#1}} }

\newif\ifnotes\notestrue
\def\hfabian#1{}

\def\hjeremy#1{}

\usepackage[square,numbers]{natbib}
\setcitestyle{semicolon, aysep={,}}

\title{{\bf A survey of trust-region radius update mechanisms.  \\ 
            Part I: First-order analysis} }

\author{{\bf J\'{e}r\'{e}my Rieussec} \quad {\bf Fabian Bastin}\\
	University of Montreal, Dept. Computing Science and Operational Research\\
	Email: jeremy.rieussec@umontreal.ca, bastin@iro.umontreal.ca \\
}

\date{\today}

\begin{document}

\maketitle

\begin{abstract}
We isolate three structural conditions on trust-region radius update rules for smooth unconstrained nonlinear optimisation, and study the class of mechanisms they define.
The conditions act on the radius directly: a lower bound relative to the gradient norm, a contraction on unsuccessful iterations, and a controlled expansion on successful ones.
A mechanism is \emph{weakly admissible} if it satisfies the first two conditions, and \emph{strongly admissible} if it satisfies the lower bound together with the controlled-expansion condition. Under uniformly bounded model Hessians, weak admissibility yields $\lim_{k\to\infty}\|\nabla f(x_k)\|=0$, and strong admissibility yields the optimal worst-case complexity $\calO(\varepsilon^{-2})$ for first-order stationarity. Strong admissibility extends the convergence guarantee to linearly growing model Hessians.
We verify admissibility for five mechanism classes: fixed-factor, step-driven, retrospective, criticality-anchored, and gradient-scaled.
Along the way, we prove convergence of the retrospective update under linearly growing model Hessians and revisit the framework of~\citet{CurtSche20, WangYuan2022}: we extend it to three distinct scaling factors with decoupled step acceptance (covering $\eta = 0$), and specialise its stochastic version to the deterministic gradient-scaled update.\end{abstract}

\section{Introduction} \label{sec:introduction}

    % Optimisation methods for finite-dimensional smooth unconstrained problems are central to mathematical programming. They provide theoretical and algorithmic foundations that allow researchers to solve diverse problems in science, engineering, economics, and statistical learning. 
    In this work, we study unconstrained, possibly nonconvex, optimisation problems of the form
    \begin{equation}
        \min_{x \in \reals^{p}} \; f(x),
        \label{eqn: deterministic obj function}
    \end{equation}
    where $f: \reals^p \to \R$ is at least continuously differentiable. 
    Since global minima are in general hard to locate, we seek \textit{first-order critical points}, i.e., points where $\nabla f(x^*) = 0$. \\

    \noindent\textbf{Trust-region methods.}
    % Importance of trust-region methods
    Originating in the work of Levenberg and Marquardt~\cite{Levenberg_1944, Marquardt_1963}, trust-region methods compute steps by approximately minimising a local quadratic model of $f$ within a ball of radius $\Delta_k$. The radius adapts to model quality: it expands when the model is accurate and contracts when it is not.
    For comprehensive overviews, see~\cite{ConnGoulToin00, NoceWrig06, More83, Yuan00, Yuan15}. The framework has been extended in many directions: discretised problems arising in PDE-constrained optimisation~\cite{GrattonSartenaerToint08},  constrained optimisation in Hilbert spaces~\cite{Toint88}, composite optimisation~\cite{CartisGouldToint11-composite}, and nonsmooth optimisation~\cite{Yuan85, GarmanjaniJudiceVicente16}.
    Recent work has focused on worst-case complexity bounds~\cite{CartGoulToin10, CartGoulToin12, CartGoulToin22, CurtisRobinsonSamadi2017, CurtisLubbertsRobinson2018, CurtisRobinsonRoyerWright2021, GrapigliaYuanYuan15, GrapigliaYuanYuan16}, extensions to derivative-free and stochastic settings~\cite{Powell2002, ConnScheVice09, BandScheVice14, BlanCartMeniSche19, ChenMeniSche18, CurtSche20, WangYuan2022}, and unified frameworks across trust-region and line-search methods~\cite{CurtSche20}.

    \noindent\textbf{The radius update problem.}
    The radius update rule drives this adaptation of $\Delta_k$.
    Many rules have been proposed: ratio-based, step-tracking, retrospective, criticality-based, gradient-proportional.
    Each has been analysed in isolation, under its own assumptions and with its own convergence proof.
    No unified perspective exists on which structural properties make a rule work.
    This makes it hard to compare mechanisms, to identify what is intrinsic to the trust-region framework, and to transfer guarantees across settings.

    \noindent\textbf{Beyond the deterministic setting.}
    Modern applications, such as machine learning, often lack exact function and derivative evaluations. Several methods extend classical algorithms to noisy settings, through stochastic gradient \cite{RobbMonro51,BottCurtNoce18}, variable sampling \cite{FrieSchm12, ByrdChinNoceWu12, BollByrdNoce18SIAM, BollByrdNoce18IMA}.  For example, \citet{Cart91, XuRoosMaho20, YaoXuRoosMaho21} accommodate inexact gradient and Hessian information in a trust-region framework. 
    Recent advances extend convergence frameworks \citet{CurtSche20} to stochastic settings  through martingale analysis and renewal processes \cite{WangYuan2022, BandScheVice14, BlanCartMeniSche19, ChenMeniSche18}.
    A central concept in these analyses is model accuracy, formalised through the notions of $\kappa$-fully linear and $\kappa$-fully quadratic models~\citep{ConnScheVice09, ConnScheVice09_introduction}.
    For $\kappa = (\mykappa{ef}, \mykappa{eg}) > 0$, a model $m_k$ is $\kappa$-fully linear on the ball of radius $\Delta_k$ if, for all $s$ with $\|s\| \leq \Delta_k$,
    \begin{equation}
        \| \nabla f(x_k) - \nabla m_k(0) \| \leq \mykappa{eg} \Delta_k, \quad \text{and} \quad
        | f(x_k + s) - m_k(s) | \leq \mykappa{ef} \Delta_k^2,
        \label{eq:fully_linear}
    \end{equation}
    with an analogous $\kappa$-fully quadratic notion for second-order models, which requires $\kappa = (\mykappa{ef}, \mykappa{eg}, \mykappa{eh}) > 0$ such that, for all $s$ with $\|s\| \leq \Delta_k$,
    \begin{equation}
        \| \nabla^2 f(x_k) - \nabla^2 m_k(0) \| \leq \mykappa{eh} \Delta_k, \quad
        \| \nabla f(x_k) - \nabla m_k(0) \| \leq \mykappa{eg} \Delta_k^2, \quad
        | f(x_k + s) - m_k(s) | \leq \mykappa{ef} \Delta_k^3.
        \label{eq:fully_quadratic}
    \end{equation}
    % Transition to questions
    These model conditions~\eqref{eq:fully_linear} and \eqref{eq:fully_quadratic} tie the trust-region radius to model accuracy: the radius must converge to zero to guarantee accurate models near optimality.
    By contrast, classical trust-region methods~\cite{ConnGoulToin00, NoceWrig06} maintain $\Delta_k$ bounded below near a minimiser, allowing eventual Newton or quasi-Newton steps and faster local convergence under standard assumptions.
    \citet{Fan06, FanLu15, FanPan11} bridge these two regimes. $\Delta_k$ may converge to zero at the rate  $\calO(\|x_k - x^*\|)$ while preserving global and superlinear convergence. \citet{YuanFan01, FanPanHong16} propose update rules that achieve this regime.

    \noindent\textbf{Contributions and outline.}
    The choice of the radius update mechanism is crucial. Various mechanisms have been proposed across contexts, with distinct convergence analyses and asymptotic behaviours. We address the following questions:
    \begin{enumerate}[(i)]
        \item Which trust-region radius update mechanisms have been proposed in the literature?
        \item Which properties are intrinsic to the trust-region framework, i.e. independent of the specific radius update rule?
        \item Do all mechanisms ensure global convergence? Do they compare in their guarantees?
        \item Does the update mechanism affect the worst-case complexity of the algorithm?
        \item Which updates can offer superlinear convergence under suitable conditions?
    \end{enumerate}
    We isolate three structural conditions on the radius update rule and study the class of mechanisms they define.
    Let $M_k$ denote a running bound on the model Hessian norm, and let $g_k$ denote the model gradient at iteration $k$. 
    The three conditions are, with $\underline{\gamma_2} \in (0,1)$, $\overline{\gamma_3} > 1$, and some constant $\tilde{\eta} \in (0,1)$:
    \begin{align}
        &\textbf{Lower bound:} \quad
        \Delta_k \;\geq\; \frac{\mykappa{lbd}}{M_k}
        \min_{0 \leq j \leq k} \|g_j\|
        \quad \text{for some } \mykappa{lbd} > 0,
        \label{cond: delta_k lower bound intro} \\[4pt]
        &\textbf{Contraction:} \quad
        \Delta_{k+1} \;\leq\; \underline{\gamma_2} \,\Delta_k
        \quad \text{when } \rho_k < \tilde{\eta},
        \label{cond: decrease intro} \\[4pt]
        &\textbf{Expansion:} \quad
        \Delta_{k+1} \;\leq\; \overline{\gamma_3} \,\Delta_k
        \quad \text{when } \rho_k \geq \tilde{\eta},
        \label{cond: increase intro}
    \end{align}
    A mechanism is \emph{weakly admissible} if it satisfies the first two conditions, and \emph{strongly admissible} if it satisfies all three.

    Under uniformly bounded model Hessians,  $\liminf_{k\to\infty}\norm{g_k} = 0$ implies $\lim_{k\to\infty}\norm{g_k} = 0$ independently of the update rule (Theorem~\ref{th: firt-order TR CV}).  
    Adding weak admissibility yields $\lim_{k\to\infty}\norm{g_k} = 0$ (Theorem~\ref{thm: Liminf of the gradient norm under strong assumption}), and strong admissibility  yields the optimal complexity $\calO(\varepsilon^{-2})$ (Theorem~\ref{thm: Upper bound successful iter BTR}).
    Strong admissibility extends the liminf guarantee to linearly growing model Hessians (Theorem~\ref{thm: Liminf of the gradient norm under weaker assumption}).  
    We verify admissibility for five mechanism classes: fixed-factor~\cite{ConnGoulToin00}, step-driven~\cite{Powe75, Hei03}, retrospective~\cite{BastMalmMoufToinToma10}, criticality-anchored~\cite{CurtSche20}, and gradient-scaled~\cite{FanPanHong16, YuanFan01}.
    We extend the retrospective analysis of~\cite{BastMalmMoufToinToma10} to linearly growing model Hessians, complementing~\citet{Yuan15}.
    We also adapt the Lyapunov framework of~\citet{CurtSche20} with three multiplicative factors and decoupled step acceptance, this allows $\eta = 0$ which is not treated in \cite{CurtSche20}, and specialise the stochastic analysis of~\citet{WangYuan2022} to the deterministic case. 

    \noindent\textbf{Relation to existing work.}
    The closest predecessor of the present work is the survey of \citet{Yuan15}, which reviews trust-region algorithms for unconstrained optimisation, nonsmooth problems, equality- and inequality-constrained problems, and conic and subspace variants. Yuan's survey is broad in scope and catalogues mechanisms, applications, and open questions across the trust-region landscape. Our focus is narrower and complementary. Whereas~\citet{Yuan15} surveys mechanisms one at a time, we organise them around three structural conditions on the radius update. This \emph{condition-based} viewpoint unifies the convergence analyses and makes the guarantees of different mechanisms directly comparable.
    \citet{CartGoulToin22} introduce the lower-bound property on the radius (our C1) for the classical fixed-factor update. We elevate it to a named condition, verify it holds for all mechanisms studied, and show it is key for global convergence.  
    \cite{GrapigliaYuanYuan15,GrapigliaYuanYuan16, Toin13} propose a nonlinear stepsize control framework that unifies trust-region and regularisation schemes. The radius follows a specific function  \(\Delta(\delta, \chi) = \delta^{\alpha} \chi^{\beta}\) with \( \alpha \in (0, 1] \) and \( \beta \in [0, 1] \). They establish optimal complexity bounds for these update rules. Our complexity argument follows their pattern and arguments, but applies to any strongly admissible mechanism.
    Finally, unlike ARC-type frameworks~\cite{CartGoulToin11, BirgGardMartSantToin17,NestPoly06}, which tie convergence to regularisation, our conditions act directly on the radius. 

    \noindent\textbf{Outline.}
    Section~\ref{sec: generic TR} introduces the generic trust-region algorithm together with its standing assumptions. Section~\ref{sec:global convergence results} states the three structural conditions, defines weak and strong admissibility, and establishes the corresponding convergence results.
    We then analyse the worst-case complexity of strongly admissible mechanisms in Section~\ref{sec:first-order worst-case complexity}.
    Section~\ref{sec:Retrospective trust-region radius} extends the retrospective trust-region analysis to linearly growing model Hessians, and Section~\ref{sec:radius update mechanisms} verifies admissibility for the five mechanism classes.
    In Section~\ref{sec:iteration complexity framework}, we build on the framework discussed in \citet{CurtSche20}, extending it to three multiplicative factors and decoupled step acceptance, and we specialise the stochastic analysis of~\citet{WangYuan2022} to the deterministic setting.
    Section~\ref{sec:conclusion} concludes and points to Part~II, which addresses asymptotic radius behaviour and superlinear convergence.

    \noindent\textbf{Notation.}
    At iteration $k$, we write $f_k = f(x_k)$, $\nabla f_k = \nabla f(x_k)$, and 
    $\nabla^2 f_k = \nabla^2 f(x_k)$.
    For a matrix $A \in \mathbb{R}^{n \times n}$, $\norm{A}$ denotes the operator 
    norm induced by $\norm{\cdot}$.
    For the Euclidean norm, $\norm{A}$ coincides with the spectral norm.
    Unless stated otherwise, $\norm{\cdot}$ denotes the Euclidean norm for vectors 
    and the induced spectral norm for matrices.
    The model at iteration $k$ is denoted $m_k(s)$, with gradient $g_k$ 
    and Hessian $H_k$, where $H_k$ is either $\nabla^2 f(x_k)$ 
    or an approximation.
    The trial step $s_k$ yields the candidate point $x_k^{\text{cand}} = x_k + s_k$, accepted 
    or rejected based on the ratio of actual to predicted reduction $\rho_k$.
    We fix $0 \leq \eta \leq \eta_1 \leq \eta_2 < 1$ throughout.
    The threshold $\eta$ governs step acceptance, and we define the sets of successful and unsuccessful iterations 
    respectively as $\mathcal{S} = \{ k \geq 0 : \rho_k \geq \eta \}$ and $\mathcal{U} = \{ k \geq 0 : \rho_k < \eta \}$.
    The thresholds $\eta_1$ and $\eta_2$ are used by certain mechanisms to adjust 
    $\Delta_k$ more finely.
    The scaling parameters satisfy $0 < \gamma_1 \leq \gamma_2 < 1 < \gamma_3$: 
    $\gamma_1$ and $\gamma_2$ govern contraction; $\gamma_3$ governs expansion.
    Finally, following~\cite{BlanCartMeniSche19, ChenMeniSche18, ConnGoulToin00}, constants 
    are denoted $\kappa$ with a descriptive subscript:
    \begin{multicols}{2}
        \begin{description}
        \item[$\mykappa{lbf}$] \emph{\underline{l}ower \underline{b}ound on the \underline{f}unction}
        \item[$\mykappa{mdc}$] \emph{\underline{m}odel \underline{d}e\underline{c}rease}
        \item[$\mykappa{umh}$] \emph{\underline{u}pper bound on the \underline{m}odel's \underline{H}essian}
        \item[$\mykappa{lbd}$] \emph{\underline{l}ower \underline{b}ound on \underline{d}elta}
        \end{description}
    \end{multicols}

\section{Generic trust-region algorithm and its subproblem} \label{sec: generic TR}

    \subsection{Basic trust-region algorithm (BTR)} \label{subsec:basic trust-region algorithm}

    Algorithm~\ref{alg:generic BTR}  outlines the \emph{basic trust-region} (BTR) method.
	At each iteration $k$, a model $m_k$ is built  to serve as a local approximation of $f$ in a neighborhood of the current iterate $x_k$ called the \textit{trust region}.
    We consider  quadratic models, commonly used in the literature, of the form
    \begin{equation}
        m_k(s) = f_k + g_k^\top s + \frac{1}{2} s^\top H_k s,
        \label{eqn:local quadratic model}
    \end{equation} 
    in the \textit{trust region} defined as the ball $\mathcal{B}_k \overset{def}{=} \left\lbrace x \in \Rp \, \text{ s.t. } \, \| x - x_k \|_k \leq \Delta_k \right\rbrace$, and where \( H_k \) is the Hessian of the objective function at \( x_k \), or an approximation.
    Please refer to \citet{ConnGoulToin00} and \citet{CartGoulToin22} for a detailed discussion on the choice of the model, including %where they consider 
    linear or quadratic models, and more complex ones based on higher-order derivatives or other approximations.
    A trial step $s_k$ which achieves some "sufficient" decrease condition on the model is computed.
    These sufficient decrease conditions are discussed in subsection~\ref{subsec:sufficient decrease condition}.
    Then, at Step~3, the ratio $\rho_k$, defined in \eqref{eqn:definition rho}, evaluates how accurately the model predicts the actual reduction in the objective function. 
    Values close to one indicate good agreement between the model and the function, whereas values close to zero reveal poor predictive quality. 
    A threshold parameter $\eta \in [0,1)$ determines whether the trial step is accepted (\textit{successful}) or rejected (\textit{unsuccessful}). 
    When $\rho_k < \eta$, the model is deemed unreliable—typically because the trust region is too large—and the radius is reduced so that a new trial step is computed in a smaller region where the model is more accurate.
    \begin{algorithm}[htbp] 
        \small
        \SetAlgoLined   
        \DontPrintSemicolon 
        
        \textbf{Step 0: Initialisation.} Choose $\eta \geq 0$. \\
        Set an initial iterate $x_0 \in \reals^p$, an initial trust-region radius $\Delta_0 \in (0 \, , \, \infty) $. 
        
        Compute $f(x_0)$ and set $k = 0$. \; ~\\
        
        \textbf{Step 1: Model definition and step computation.} Define a local model $m_k$ of $f$ in the neighbourhood $\mathcal{B}_k$. \; ~\\
        
        \textbf{Step 2: Model minimisation.} Compute a step $s_k$ that "sufficiently reduces the model" $m_k$ with $x_k + s_k \in \mathcal{B}_k$, and define the candidate point $x_k^{\text{cand}} = x_k + s_k$. \; ~\\
        
        \textbf{Step 3: Acceptance of the trial point.} Compute $f(x_k^{\text{cand}})$ and define
        \begin{equation}
            \rho_k = \dfrac{f(x_k) - f(x_k^{\text{cand}})}{m_k(0) - m_k(s_k)}
            \label{eqn:definition rho}
        \end{equation}
        If $\rho_k \geq \eta$, then define $x_{k+1} = x_k + s_k$ ; otherwise take $x_{k+1} = x_k$. \; ~\\
        
        \textbf{Step 4: Trust-region radius update.} Compute $\Delta_{k+1}$ according to some update rule.  \; ~\\
        
        \textbf{Step 5: Next iteration.} Increment $k$ by 1 and repeat \textbf{Steps 1 to 5} while some stopping criterion is not met.  \;
        
        \caption{Basic trust-region algorithm (BTR).}
        \label{alg:generic BTR}
    \end{algorithm}
    The choice of the acceptance parameter $\eta$ plays a key theoretical role in trust-region algorithms 
    \citep[Chap.~17]{ConnGoulToin00}; see also \citet{GouldOrbanSartenaerToint05, More83, Yuan98, Yuan00}. 
    Several authors, including \citet{Powe75, Powe84, Yuan00, Yuan15}, have used $\eta = 0$, 
    thereby accepting any step that decreases the objective function. 
    However, this choice only ensures that at least one accumulation point, if there are any, is first-order critical, that is
    $\liminf_{k \to \infty} \|g_k\| = 0$. 
    \citet{Yuan98} illustrates this limitation by constructing a one-dimensional example that cycles among three points, two of which are non-stationary. 
    For stronger convergence guarantees, it is preferable to select a small positive constant $\eta > 0$ 
    \citep{ConnGoulToin00, NoceWrig06, Yuan00}, ensuring that 
    \begin{equation}
        \lim_{k \to \infty} \|g_k\| = 0.
        \label{eqn:strong convergence}
    \end{equation}

    \subsection{Assumptions on function} \label{subsec:assumptions for global convergence}

        We state the assumptions used to establish first-order convergence.
        The first assumption stresses that the objective function should not be unbounded below, implying there exist points where the gradient norm can be made arbitrarily small, possibly even zero, see \citet{Ekeland74}.
        \begin{assumptionFunction}
				$f$ is bounded below on $\R^p$. Formally, 
				$ \exists \, \mykappa{lbf} \in \RR$ %\vert \,
                such that $\forall x \in \Rp$ ,  $f(x) \geq \mykappa{lbf}$.
				\label{as: Bounded below}
		\end{assumptionFunction}
        Assumption AF.\ref{as: Lipschitz continuous gradient} is a common assumption in optimisation, it ensures that the gradient does not change too abruptly. %This can seem strong, but i
        It is crucial for worst-case complexity analysis. 
        \citet{CartGoulToin22} show that without Lipschitz continuity, for any finite sequence of test points, one can construct a one-dimensional counterexample $f$ where the gradient norms do not decrease below any threshold.
        Therefore, it is impossible to give any guarantees for a finite sequence of test points to get the norm of the gradient to fall below any prescribed accuracy threshold. 
        \begin{assumptionFunction}[Gradient Lipschitz continuity]
            $f$ is differentiable with Lipschitz continuous gradient with constant \( L > 0 \)
            \[
                \|\nabla f(x) - \nabla f(y)\| \leq L \|x - y\|, \quad \forall x, y \in \RR^p.
            \]
            \label{as: Lipschitz continuous gradient}
        \end{assumptionFunction}        
        
        \begin{remark}
            Another assumption often made in the context of trust-region methods, which can be found in \cite{Powe75, Powe84, Stei83, Hei03, YuanFan01}, is that the gradient function \( \nabla f \) is uniformly continuous. However, it is implied by Lipschitz continuity of the gradient.
        \end{remark}
        
    \subsection{Assumptions on the model} \label{subsec:sufficient decrease condition}

        We now introduce Assumption~AM.\ref{as: model agreement with function}. 
        It requires that the model $m_k$ matches the objective function $f$ at the current iterate $x_k$ up to first order. 
        This condition ensures that the model accurately reflects both the value and the gradient of the objective function at $x_k$,  providing a reliable local approximation for the trust-region step computation.
        \begin{assumptionModel}
            For all $k$, we have $m_k(0)  = f(x_k)  $ and $g_k \overset{def}{=} \nabla_x m_k(0)  = \nabla_x f(x_k)$.
            \label{as: model agreement with function}
        \end{assumptionModel}

        The trial step $s_k$ is obtained by approximately minimising the model $m_k(s)$ in the trust region $\mathcal{B}_k$, ensuring a ``sufficient'' decrease in the model. To motivate this condition, we recall the descent property in gradient descent. With the update $x_{k+1} = x_k - \alpha_k \nabla f(x_k)$ and Lipschitz continuity of the gradient (see AF.\ref{as: Lipschitz continuous gradient} in Section~\ref{subsec:assumptions for global convergence}), one obtains the descent lemma \cite{Nest14},
        $
        \Bigl(\alpha_k - \tfrac{L}{2}\alpha_k^2\Bigr)\|\nabla f(x_k)\|^2 \leq f(x_k) - f(x_{k+1}),
        $
        which, for $\alpha_k = 1/L$, reduces to
        $
        \frac{1}{2L}\|\nabla f(x_k)\|^2 \leq f(x_k) - f(x_{k+1}).
        $
        This inequality underpins the convergence analysis of gradient descent in nonconvex settings \cite{Nest14}, implying in particular that, if $f$ is bounded below, then $\liminf_{k \to \infty}\|\nabla f(x_k)\| = 0$. With non-constant stepsizes, a line search can ensure sufficient decrease via the Armijo condition.  

        By analogy, trust-region methods enforce sufficient decrease through the \emph{Cauchy step}, obtained by minimising the quadratic model $m_k(s)$ along the gradient direction within the trust region. When the model is quadratic, the exact minimiser along this arc can be computed explicitly.
        The resulting step, referred to as the \textit{Cauchy step}, is defined as
        \begin{equation}
            s_k^C = - t_k^C g_k \quad \text{where} \quad  t_k^C = \underset{t \geq 0, \; - t g_k \in \mathcal{B}_k}{\mathrm{argmin}} \, \left\lbrace m_k\left(- t g_k \right) \right\rbrace .
            \label{eqn:Cauchy step}
        \end{equation}
        and $x^C = x_k + s_k^C$ is defined as  the associated \textit{Cauchy point}, while the model decrease $m_k(0) - m_k(s_k^C)$  at the Cauchy point, is also referred to as the \textit{Cauchy decrease}~\cite{ConnGoulToin00}.
        It is an important feature of the convergence theory as it is possible to determine a minimum decrease when moving to this point, as shown in Theorem~\ref{th: Cauchy decrease} (see \cite{ConnGoulToin00, NoceWrig06}). 
        
            \begin{theorem}
                For the quadratic model~\eqref{eqn:local quadratic model}, the model decrease at the Cauchy step~\eqref{eqn:Cauchy step} satisfies the inequality
                \begin{equation}
                    m_k(0) - m_k\left(s_k^C\right) \geq \dfrac{1}{2} \Vert g_k \Vert \min \left[ \dfrac{\Vert g_k \Vert}{\| H_k \|} , \, \Delta_k \right].
                    \label{eqn: Cauchy decrease}
                \end{equation}
                where, by convention, for  \( \|H_k\| = 0 \), we set \( \frac{\Vert g_k \Vert}{\| H_k \|} = +\infty \).
                \label{th: Cauchy decrease}
            \end{theorem}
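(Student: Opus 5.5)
The argument reduces the Cauchy step to a one-dimensional problem in $t$ and treats it by cases. If $g_k = 0$, both sides of \eqref{eqn: Cauchy decrease} vanish, so we assume $g_k \neq 0$. We take the trust region in the Euclidean norm, so the constraint $-tg_k \in \mathcal{B}_k$ reads $0 \le t \le \Delta_k/\|g_k\|$. Define
\[
\phi(t) := m_k(-t g_k) - m_k(0) = -t\|g_k\|^2 + \tfrac{1}{2} t^2 \kappa_k, \qquad \kappa_k := g_k^\top H_k g_k .
\]
Then $t_k^C$ minimises $\phi$ over $[0, t_{\max}]$, where $t_{\max} = \Delta_k/\|g_k\|$. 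Throughout we use the bound $\kappa_k \le \|H_k\|\,\|g_k\|^2$, which holds because $\|\cdot\|$ is the induced operator norm.

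\textbf{Case 1: $\kappa_k \le 0$.} Here $\phi$ is concave with $\phi'(0) < 0$, so it is decreasing on $[0,\infty)$ and the minimiser is $t_k^C = t_{\max}$. The quadratic term is nonpositive, so
\[
-\phi(t_{\max}) \ge t_{\max}\|g_k\|^2 = \|g_k\|\Delta_k \ge \tfrac{1}{2}\|g_k\| \min\bigl[\|g_k\|/\|H_k\|,\ \Delta_k\bigr].
\]
This case also covers $H_k = 0$, which is consistent with the convention $\|g_k\|/\|H_k\| = +\infty$.

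\textbf{Case 2: $\kappa_k > 0$.} Now $\phi$ is a convex parabola with unconstrained minimiser $t^* = \|g_k\|^2/\kappa_k$, so $t_k^C = \min(t^*, t_{\max})$. We split into two subcases.
\begin{itemize}
\item If $t^* \le t_{\max}$, the decrease is $-\phi(t^*) = \|g_k\|^4/(2\kappa_k)$. Using $\kappa_k \le \|H_k\|\,\|g_k\|^2$, this is at least $\|g_k\|^2/(2\|H_k\|)$.
\item If $t^* > t_{\max}$, then $\kappa_k\, t_{\max} < \|g_k\|^2$. Hence
\[
-\phi(t_{\max}) = t_{\max}\Bigl(\|g_k\|^2 - \tfrac{1}{2} t_{\max}\kappa_k\Bigr) \ge \tfrac{1}{2} t_{\max}\|g_k\|^2 = \tfrac{1}{2}\|g_k\|\Delta_k .
\]
\end{itemize}
In either subcase the decrease is bounded below by the minimum in \eqref{eqn: Cauchy decrease}.

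The only delicate point is the subcase $t^* > t_{\max}$. There one must use the inequality $\kappa_k t_{\max} < \|g_k\|^2$ rather than the cruder bound through $\|H_k\|$, because the latter would lose the factor needed to reach $\tfrac{1}{2}\|g_k\|\Delta_k$. Beyond that, the case split must handle the sign of $\kappa_k$ and the convention for $\|H_k\| = 0$ consistently, which the two cases above do. Finally, if the trust region is measured in a norm $\|\cdot\|_k$ other than the Euclidean one, the same argument goes through with $t_{\max}$ adjusted; the bound then changes by a norm-equivalence constant, which we assume equals one here.
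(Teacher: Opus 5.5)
Your proof is correct. It is the standard argument behind \citet[Theorem 6.3.1]{ConnGoulToin00} and \citet[Theorem 10.1]{ConnScheVice09_introduction}, to which the paper defers instead of giving its own proof: a split on the sign of $g_k^\top H_k g_k$, with interior and boundary subcases when the curvature is positive, and the case $H_k = 0$ absorbed into the nonpositive-curvature case.
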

            \begin{proof}
                See \citet[Theorem 6.3.1]{ConnGoulToin00} or \citet[Theorem 10.1]{ConnScheVice09_introduction}. 
            \end{proof}
            \begin{remark}
                When the Hessian is zero, the model is linear and the Cauchy point reduces to the point on the boundary in the direction of the gradient, i.e. \( s_k^C = - \Delta_k \frac{g_k}{\|g_k\|} \).
            \end{remark}

            \citet{Powe75,Powe84, ConnGoulToin00} show that only a fraction of the decrease provided by the Cauchy point is needed to ensure global convergence to first-order critical points.
            This is formalised in Assumption~AA.\ref{as: Cauchy decrease} below.
            \begin{assumptionAlgo}[Sufficient decrease condition: \emph{Cauchy decrease}]
                For all $k$,
                            $$ m_k(0) - m_k(s_k) \geq \dfrac{\mykappa{mdc} }{2} \Vert g_k \Vert \min \left[ \dfrac{\Vert g_k \Vert}{\| H_k \|}, \Delta_k \right]  $$
                    for some constant $ \mykappa{mdc} \in (0, 1)$.
                \label{as: Cauchy decrease}
            \end{assumptionAlgo}

        Finally, we discuss the assumptions on the model Hessian, which are crucial for convergence analysis. \\
        The model Hessian can be uniformly bounded, as expressed below.
        \begin{assumptionModel}[Uniform bound on the model Hessian]
            There exists a constant \( \mykappa{umh} > 0 \) such that the Hessian of the model \( m_k \) is uniformly bounded across all iterations \( k \), i.e. 
                \[
                    \| H_k \| \leq \mykappa{umh} < +\infty, \quad \forall k \in \NN.
                \]
            \label{as: model hessian}
        \end{assumptionModel}
        If the Hessian bound is relaxed, convergence can still be guaranteed provided that the growth of the Hessians remains moderate. In particular, results in \cite{GrapigliaYuanYuan15, GrapigliaYuanYuan16, Powe84, Yuan85, Yuan00} provide convergence guarantees under the following weaker assumption on the model Hessian.
        \begin{assumptionModel}
            For all $k \in \NN$, the Hessian of the model $m_k$ satisfies $\| H_k \| \leq M_k < +\infty$,  where \( M_k \) is defined in \eqref{eqn: M_k definition}, and %the sequence \( \{ M_k \} \) satisfies 
%            The bound on the model Hessian satisfies, $\| H_k \| \leq M_k < +\infty$, for all $ k \in \NN$ where \( M_k \) is defined in \eqref{eqn: M_k definition}, and the sequence \( \{ M_k \} \) satisfies 
            \begin{equation}
                \sum_{k=0}^\infty \dfrac{1}{M_k} = +\infty,
                \label{eqn: linear growth bound on model Hessian}
            \end{equation}
            \label{as: Linear growth bound model Hessian}
        \end{assumptionModel}

        This assumption generalises a standard requirement in the literature \cite{Powe70, Powe75, Toin79}, which allows the model Hessian to grow with the step size when using secant-based updates such as BFGS, DFP, or SR1,
        \begin{assumptionModel}
            The Hessian approximation $H_k$ satisfies, for some constants $\mykappa{umh}, \mykappa{umh}^{\prime} > 0$,
            \[
                \| H_k \| \leq \mykappa{umh} + \mykappa{umh}^{\prime} \sum_{i=0}^{k} \| s_i \|,
            \]
            \label{as: model hessian based on sum of steps}
        \end{assumptionModel}

        Another useful practice, subsumed by Assumption AM.\ref{as: Linear growth bound model Hessian}, is to let the Hessian bound grow linearly with the iteration index.%$k$.
        This situation arises, for instance, when update formulas used for second-derivative approximations ensure that the differences \(\left\{ \| H_{k+1} - H_k \| ; \; k = 1, 2, 3, \ldots \right\}\) are uniformly bounded \cite{Powe84}.
        \begin{assumptionModel}
        	There exist constants $\mykappa{umh} > 0$, $\mykappa{umh}^{\prime} > 0$, such that, for all $k \in \NN$,
%            The bound on the model Hessian satisfies, for some constants $\mykappa{umh}, \mykappa{umh}^{\prime} > 0$,
            \[
                M_k \leq \mykappa{umh}  + \mykappa{umh}^{\prime} \, k.
            \]       
            \label{as: growth bound model Hessian with iteration}
        \end{assumptionModel}
        
        \subsection{Trust-region subproblem} \label{sec:TR subproblem}

            The trust-region subproblem \eqref{eq: TR subproblem}, widely studied 
            \cite{CartGoulToin22, ConnGoulToin00, More83, MoreSorensen1983, NoceWrig06, Stei83, Yuan2000}, is a key component of trust-region methods.
            In our context, the subproblem involves minimising a quadratic model~\eqref{eqn:local quadratic model} constrained to a ball of radius \( \Delta_k \) centered at the current iterate \( x_k \), i.e.,
            \begin{equation}
                \min_{s \in \Rp} m_k(s) \quad \text{subject to} \quad \|s\| \leq \Delta_k.
                \label{eq: TR subproblem}
            \end{equation}
            When \( H_k \) is positive definite, the unconstrained exact solution to this problem is given by $H_k s_k = - g_k$, but it can be outside the trust region if \( \Delta_k \) is small.
            Therefore, the solution to the trust-region subproblem is often found on the boundary of the trust region, i.e., \( \|s_k\| = \Delta_k \), especially when  \( H_k \) is not positive definite.
            Solving the trust-region subproblem exactly can be computationally expensive, especially for large-scale problems. 
            Therefore, it is common to use iterative methods to find an approximate solution, like the truncated Conjugate Gradient (CG) method, or Krylov subspace methods \cite{Stei83, More83, NoceWrig06, CartGoulToin22}.

    \subsection{General dynamics of the trust-region radius} \label{sec:general dynamics of the trust-region radius}

            We establish general dynamics of $\Delta_k$ for Algorithm~\ref{alg:generic BTR}.
            These results are classical in the trust-region literature \cite{ConnGoulToin00, NoceWrig06, CartGoulToin22}. 
            These results hold independently of any specific update mechanism, and we focus in particular on the step norm $\|s_k\|$. Moreover, they naturally motivate a condition on the radius that we adopt in the convergence analysis. 

            In the following, we work under Assumptions AF.\ref{as: Bounded below}, AF.\ref{as: Lipschitz continuous gradient}, AM.\ref{as: model agreement with function}, and AA.\ref{as: Cauchy decrease}.
			\begin{theorem}
				Consider any non-negative constant \( \eta \in (0, 1) \) and let \( \Delta_k \) be the trust-region radius at iteration \( k \). If we have 
					\begin{equation}
						\Delta_k \leq \dfrac{\mykappa{mdc}(1 - \eta) \| g_k \| }{L + \| H_k \|},
                        \label{eqn: successful iteration for low Delta}
					\end{equation}
				then $ \rho_k \geq \eta. $ \\
                This is also true if we assume the inequality holds for $\| s_k \|$ instead of $\Delta_k$, i.e., if we have
                    \begin{equation}
                        \| s_k \| \leq \dfrac{\mykappa{mdc}(1 - \eta)\| g_k \|}{L + \| H_k \|}.
                        \label{eqn: successful iteration for low s_k}
                    \end{equation}
                \label{thm: successful iteration for low Delta}
			\end{theorem}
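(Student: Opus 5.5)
Since $\|s_k\|\le\Delta_k$, condition \eqref{eqn: successful iteration for low Delta} implies \eqref{eqn: successful iteration for low s_k}. So I will prove the statement under the weaker hypothesis on $\|s_k\|$. The strategy is the classical one: bound $|1-\rho_k|$ by comparing the actual and predicted reductions. The comparison uses a Taylor-type estimate from AF.\ref{as: Lipschitz continuous gradient}, and the predicted reduction is bounded below through AA.\ref{as: Cauchy decrease}. We may assume $g_k\neq 0$; otherwise the right-hand side is zero, which forces $s_k=0$, and that degenerate case is excluded or treated by convention.

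\textbf{Step 1: error between $f$ and $m_k$.} By AM.\ref{as: model agreement with function}, $m_k(0)=f_k$ and $g_k=\nabla f_k$. The integral form of the mean value theorem together with the Lipschitz bound on $\nabla f$ then gives
\[
\abs{f(x_k+s_k)-m_k(s_k)} \le \tfrac{L}{2}\|s_k\|^2+\tfrac12\|H_k\|\|s_k\|^2=\tfrac12(L+\|H_k\|)\|s_k\|^2 .
\]

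\textbf{Step 2: lower bound on the predicted reduction.} Write $\beta=\mykappa{mdc}(1-\eta)<1$. The hypothesis gives $\|s_k\|\le \beta\|g_k\|/(L+\|H_k\|)\le \|g_k\|/\|H_k\|$, using the convention $+\infty$ when $H_k=0$. It also gives $\|s_k\|\le\Delta_k$. Hence $\min[\|g_k\|/\|H_k\|,\Delta_k]\ge\|s_k\|$, and AA.\ref{as: Cauchy decrease} yields
\[
m_k(0)-m_k(s_k)\ge \tfrac{\mykappa{mdc}}{2}\|g_k\|\,\|s_k\|.
\]
If $s_k=0$ while $g_k\neq0$, this contradicts the Cauchy decrease bound, which is strictly positive. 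So $s_k\neq0$ and $\rho_k$ is well defined.

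\textbf{Step 3: conclude.} Combining the two steps,
\[
\abs{\rho_k-1}=\frac{\abs{f(x_k+s_k)-m_k(s_k)}}{m_k(0)-m_k(s_k)}\le \frac{(L+\|H_k\|)\|s_k\|}{\mykappa{mdc}\|g_k\|}\le 1-\eta .
\]
Therefore $\rho_k\ge\eta$.

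\textbf{Main obstacle.} The delicate point is Step 2. One must check that the step-norm hypothesis, and not only the radius hypothesis, is enough to control the $\min$ in AA.\ref{as: Cauchy decrease}. Two facts make this work: $\|s_k\|\le\Delta_k$ always holds, and $\beta<1$ gives the needed comparison with $\|g_k\|/\|H_k\|$. The rest is routine.
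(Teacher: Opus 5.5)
Your proof is correct and follows essentially the same argument as the paper: bound the model error by $\tfrac12(L+\|H_k\|)\|s_k\|^2$, and use $\mykappa{mdc}(1-\eta)<1$ to get $\|s_k\|\le\|g_k\|/\|H_k\|$, so that the Cauchy-decrease minimum is at least $\|s_k\|$ and $|\rho_k-1|\le 1-\eta$. Two small additions of yours are welcome: you reduce the $\Delta_k$ case to the $\|s_k\|$ case via $\|s_k\|\le\Delta_k$, where the paper instead cites the literature, and you check that $\rho_k$ is well defined.
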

            \begin{proof}
                The result follows directly from the Cauchy decrease assumption AA.\ref{as: Cauchy decrease}. A proof can be found in \citet[Lemma 2.3.3]{CartGoulToin22} or \citet[Theorem 6.4.2]{ConnGoulToin00} for the case involving $\Delta_k$. The argument for $s_k$ is similar, but we include it here for completeness.
                From \eqref{eqn:definition rho} and \eqref{eqn:local quadratic model}, we have
                \[
                    | \rho_k - 1 | = \dfrac{| f(x_k + s_k) - f(x_k) - g_k^T \, s_k -\frac{1}{2} s_k^T H_k s_k |}{m_k(0) - m_k(s_k) } \leq \dfrac{ \left(L  +  \| H_k \| \right) \, \| s_k \|^2 }{\mykappa{mdc} \| g_k \| \min\left\{ \| s_k \| \, , \, \dfrac{ \| g_k \| }{\| H_k \|}  \right\}},
                \]
                where the inequality follows from triangular inequality, the Lipschitz continuity of the gradients (AF.\ref{as: Lipschitz continuous gradient}) with Lemma 1.2.3 \citet{Nest18} and AA.\ref{as: Cauchy decrease} with $\norm{s_k} \leq \Delta_k$.
                Since $\mykappa{mdc}, (1 - \eta) \in (0,1)$ and $L > 0$, hypothesis~\eqref{eqn: successful iteration for low s_k} implies $\|s_k\| \leq \frac{\|g_k\|}{\|H_k\|}$.
                Thus, still from~\eqref{eqn: successful iteration for low s_k}, we have 
                \[
                    |\rho_k - 1|
                    \;\leq\; \frac{(L + \|H_k\|)}{\mykappa{mdc}\,\|g_k\|\,\|s_k\|}\,\|s_k\|^2
                    \;\leq\; 1 - \eta,
                \]
                which yields $\rho_k \geq \eta$.
            \end{proof}

            \begin{corollary}
            For any non-negative constant \( \eta \in (0, 1) \), whenever $\rho_k < \eta$, we have 
            \begin{equation}
                \Delta_k \geq \| s_k \| > \dfrac{\mykappa{mdc}(1 - \eta)\| g_k \| }{L + \| H_k \|}.
                \label{eqn: delta_k lower bound}
            \end{equation}
            \label{cor: delta_k lower bound}
        \end{corollary}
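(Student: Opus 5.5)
The corollary is the contrapositive of Theorem~\ref{thm: successful iteration for low Delta}, combined with the feasibility constraint on the trial step. I would prove the two inequalities in \eqref{eqn: delta_k lower bound} separately.

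\emph{Left inequality.} The bound $\Delta_k \geq \|s_k\|$ holds on every iteration, regardless of $\rho_k$. Step~2 of Algorithm~\ref{alg:generic BTR} requires $x_k + s_k \in \mathcal{B}_k$, so $\|s_k\| \leq \Delta_k$. Here I take the trust-region norm $\|\cdot\|_k$ to be the Euclidean norm, as the subproblem \eqref{eq: TR subproblem} does.

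\emph{Right inequality.} Fix an iteration $k$ with $\rho_k < \eta$ and argue by contradiction. Suppose
\[
\|s_k\| \leq \frac{\mykappa{mdc}(1-\eta)\|g_k\|}{L + \|H_k\|}.
\]
This is exactly hypothesis \eqref{eqn: successful iteration for low s_k}, so the second part of Theorem~\ref{thm: successful iteration for low Delta} gives $\rho_k \geq \eta$, which is a contradiction. Hence the strict inequality $\|s_k\| > \mykappa{mdc}(1-\eta)\|g_k\|/(L+\|H_k\|)$ holds. Chaining it with $\Delta_k \geq \|s_k\|$ yields \eqref{eqn: delta_k lower bound}.

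\emph{Degenerate cases.} The only point needing care is whether the ratio $\rho_k$ and the cited theorem are well defined.
\begin{itemize}
\item If $g_k = 0$, the claimed lower bound is $0$. The predicted decrease may then vanish, but one can take $s_k = 0$, or simply note that the right-hand side is zero. The strict inequality $\|s_k\| > 0$ is then only meaningful when $\rho_k$ is defined, that is, when $s_k \neq 0$ and the predicted decrease is positive.
\item When $g_k \neq 0$ and $s_k \neq 0$, the predicted decrease is positive by AA.\ref{as: Cauchy decrease}, so $\rho_k$ is well defined.
\end{itemize}
I would therefore state the result for iterations where $\rho_k$ is defined, which implicitly requires $g_k \neq 0$, since the algorithm would otherwise have stopped.

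Beyond these remarks no real obstacle is expected: the content is entirely contained in Theorem~\ref{thm: successful iteration for low Delta}.
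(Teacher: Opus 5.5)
Your proof is correct and follows the paper's route: the paper also obtains the corollary by contraposition of Theorem~\ref{thm: successful iteration for low Delta}, which you apply in its $\|s_k\|$ form and chain with the feasibility bound $\|s_k\| \le \Delta_k$. Your remarks on the degenerate case (when $g_k = 0$, so that $\rho_k$ may be undefined) go beyond the paper's one-line proof, but they are harmless.
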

        \begin{proof}
            This is a direct consequence of Theorem~\ref{thm: successful iteration for low Delta} by contraposition.
        \end{proof}
        Theorem~\ref{thm: successful iteration for low Delta} and Corollary~\ref{cor: delta_k lower bound} together show that, whenever the radius is reduced below a certain threshold depending on the gradient norm and curvature, the iteration is guaranteed to be successful. 

        The update mechanism must adjust $\Delta_k$ dynamically to face two requirements. Reduce the radius when the model is unreliable, and prevent it from staying unnecessarily below the threshold, which would slow progress.
        This motivates Condition~\ref{cond: delta_k lower bound} presented in the next section.

        \subsection{First-order global convergence} \label{subsec:cornerstone result for First order global convergence}

            We present a central result in Theorem~\ref{th: firt-order TR CV} that establishes the global success of trust-region methods.
            It is well established and classical in the literature \cite{ConnGoulToin00, NoceWrig06}, but we include it here to emphasise its role in our analysis. The property is intrinsic to Algorithm~\ref{alg:generic BTR} as it holds \textbf{independently} of the update mechanism.
            Provided a fixed fraction of the Cauchy decrease and the standard assumptions on the objective function are satisfied, any accumulation points of the sequence, if some exist, are first-order critical.
            \begin{theorem}
                Suppose that the function satisfies Assumptions AF.\ref{as: Bounded below} 
                and AF.\ref{as: Lipschitz continuous gradient}, 
                the model $m_k$ satisfies AM.\ref{as: model agreement with function} 
                and AM.\ref{as: model hessian}. 
                Furthermore, the steps $s_k$ satisfy the Cauchy decrease condition AA.\ref{as: Cauchy decrease}, and in Algorithm~\ref{alg:generic BTR}, $\eta > 0$. \\
                If we \textbf{already have} that $\liminf_{k \to \infty} \|g_k\| = 0$, then 
                \begin{equation}
                    \lim_{k \to \infty} \|g_k\| = 0.
                \end{equation}
                    \label{th: firt-order TR CV}
            \end{theorem}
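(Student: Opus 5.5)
This is the classical argument of \citet[Theorem 6.4.6]{ConnGoulToin00}, run by contradiction. Suppose $\|g_k\|\not\to 0$. Then there exist $\epsilon>0$ and an infinite subsequence $\{t_i\}\subseteq\mathcal{S}$ with $\|g_{t_i}\|\geq 2\epsilon$. We may take these indices successful: the iterate changes only at successful iterations, so $g_k$ is constant between them. Since $\liminf_k\|g_k\|=0$ is assumed, for each $t_i$ we can define $\ell_i>t_i$ as the first index after $t_i$ with $\|g_{\ell_i}\|<\epsilon$. Passing to a subsequence, we may assume $t_{i+1}>\ell_i$, so that the index windows $\mathcal{K}_i=\{k\in\mathcal{S}: t_i\leq k<\ell_i\}$ are disjoint. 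On every $k\in\mathcal{K}_i$ we have $\|g_k\|\geq\epsilon$.

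\textbf{Function decrease on the windows.} For $k\in\mathcal{K}_i$, AA.\ref{as: Cauchy decrease}, AM.\ref{as: model hessian} and $\rho_k\geq\eta$ give
\[
f(x_k)-f(x_{k+1})\geq \eta\,\frac{\mykappa{mdc}}{2}\,\epsilon\min\left[\frac{\epsilon}{\mykappa{umh}},\Delta_k\right]\geq \eta\,\frac{\mykappa{mdc}}{2}\,\epsilon\min\left[\frac{\epsilon}{\mykappa{umh}},\|x_{k+1}-x_k\|\right],
\]
where the second inequality uses $\|x_{k+1}-x_k\|=\|s_k\|\leq\Delta_k$. This is exactly where $\eta>0$ is needed.

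\textbf{Summing and using boundedness below.} The sequence $f(x_k)$ is monotonically nonincreasing, because unsuccessful iterations leave $x_k$ unchanged and successful ones decrease $f$ since $\eta>0$. By AF.\ref{as: Bounded below} it converges, so $f(x_{t_i})-f(x_{\ell_i})\to 0$. Every term of the telescoped sum over $\mathcal{K}_i$ is nonnegative. Once $f(x_{t_i})-f(x_{\ell_i})$ is small enough, each individual term must therefore be below $\eta\mykappa{mdc}\epsilon^2/(2\mykappa{umh})$, and so the minimum in the display is attained by the step-length term. This gives
\[
\|x_{\ell_i}-x_{t_i}\|\leq\sum_{k\in\mathcal{K}_i}\|x_{k+1}-x_k\|\leq \frac{2}{\eta\,\mykappa{mdc}\,\epsilon}\bigl(f(x_{t_i})-f(x_{\ell_i})\bigr)\longrightarrow 0 .
\]

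\textbf{Contradiction.} By AF.\ref{as: Lipschitz continuous gradient} and AM.\ref{as: model agreement with function},
\[
\epsilon\leq\|g_{t_i}\|-\|g_{\ell_i}\|\leq\|g_{t_i}-g_{\ell_i}\|\leq L\|x_{t_i}-x_{\ell_i}\|\to0,
\]
which is impossible.

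The main obstacle is the bookkeeping of the two cases in the minimum inside the Cauchy bound. The route above handles it by noting that each individual decrease tends to zero, which rules out the $\epsilon/\mykappa{umh}$ branch for large $i$. A second point needing care is that unsuccessful iterations inside the window contribute neither decrease nor displacement. Consequently only successful indices enter both sums, and nothing about the radius update rule is ever used, consistent with the claim that the result is mechanism-independent.
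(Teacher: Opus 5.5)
Your proof is correct and is essentially the paper's argument. Both proceed by contradiction over windows running from a large-gradient index to the first later small-gradient index: Lipschitz continuity ties the gradient drop to the displacement, and the Cauchy decrease on successful steps ties that displacement to a function decrease that must vanish, since $f(x_k)$ is monotone and bounded below. The only difference is direction and bookkeeping: you run it in contrapositive form (vanishing decrease forces vanishing displacement), whereas the paper shows that a displacement larger than $\varepsilon/(2L)$ forces a uniform decrease. Bounding the minimum by $\|x_{k+1}-x_k\|$ rather than $\Delta_k$ also neatly replaces the paper's case split on $\Delta_k$, which as written tacitly needs the large-radius index to be a successful one.
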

			\begin{proof}
               See Appendix~\ref{appendix: proof of Theorem 1}.
			\end{proof}
            \begin{remark}
                If we only assume that $\eta = 0$, then for iterations with $\rho_k \in (\eta, \eta_1)$,
                the algorithm can accept steps with arbitrarily small decrease allowing to have accumulation points with non-zero gradient norm, as exhibited by \citet{Yuan98} on a one-dimensional example.
            \end{remark}
        
    \section{Abstract conditions and convergence analysis of the generic TR algorithm} \label{sec:global convergence results}

        Therefore, it suffices to analyse the case $\liminf_{k \to \infty} \|g_k\| = 0 $.   
        In Subsection~\ref{subsec:liminf behaviour of the gradient norm under strong assumption}, we show that the liminf property holds whenever Condition~\ref{cond: delta_k lower bound} is satisfied and the update mechanism enforces uniform contraction on unsuccessful iterations (Condition~\ref{cond: decrease of the trust-region radius}).
        In Subsection~\ref{subsec:liminf behaviour of the gradient norm under weaker assumption}, convergence is established with respect to the weaker assumption AM~\ref{as: Linear growth bound model Hessian}, under Condition~\ref{cond: delta_k lower bound}, \ref{cond: decrease of the trust-region radius}, and \ref{cond: increase of the trust-region radius}, the latter requiring uniform growth of the trust-region radius on successful iterations. Moreover, we stress the necessity of Condition~\ref{cond: increase of the trust-region radius} to compensate for the weaker bound on the Hessian.

        \subsection{The three structural conditions}

            We now state formally the three conditions on the radius update rule that drive the convergence analysis.

            \begin{condition}[Lower bound on the radius]
                There exists a constant $ 0 < \mykappa{lbd} \leq 1 $ such that the trust-region radius \( \Delta_k \) satisfies the following lower bound for all \( k \geq 0 \),
                \begin{equation}
                    \Delta_k \geq \frac{\mykappa{lbd}}{M_k} \min_{0 \leq i \leq k} \| g_i \|,
                    \label{lem: delta_k lower bound}
                \end{equation}
                where $\{M_k\} $ is a sequence of non-decreasing positive numbers that upper bound the model Hessian norm.
                \label{cond: delta_k lower bound}
            \end{condition}
            We choose $\mykappa{lbd} \leq 1$ without loss of generality. It simplifies formulas in proofs and discussions. Also, the sequence $\{M_k\}$ can be defined in various ways. A natural choice is 
            \begin{equation}
                M_k = L + \max_{0 \leq i \leq k} \| H_i \|.
                \label{eqn: M_k definition}
            \end{equation}	   

            \begin{condition}[Contraction on unsuccessful iterations]
                There exists $\underline{\gamma_2} \in (0\, , \, 1)$ and $\myeta{dec} > 0$ such that, for all
                $k$, if $\rho_k < \myeta{dec}$, then
                \[
                    \Delta_{k+1} \;\leq\; \underline{\gamma_2} \, \Delta_k.
                \]
                \label{cond: decrease of the trust-region radius}
            \end{condition}

            \begin{condition}[Bounded expansion on successful iterations]
                There exists $\underline{\gamma_2} \in (0\, , \, 1)$, $\overline{\gamma_3} > 1$ and $\myeta{inc} > 0$ such that, for all $k$, 
                \begin{description}
                        \item[(decrease)] if $\rho_k < \myeta{inc}$, then $\Delta_{k+1} \leq \underline{\gamma_2} \, \Delta_k$;
                        \item[(increase)] else $\rho_k \geq \myeta{inc}$, then $\Delta_{k+1} \leq \overline{\gamma_3} \,  \Delta_k$.
                \end{description}                    
                \label{cond: increase of the trust-region radius}
            \end{condition}
            Condition~\ref{cond: delta_k lower bound} prevents the radius from collapsing faster than the criticality measure. As long as the smallest gradient norm observed so far is bounded away from zero, so is $\Delta_k$.
            Condition~\ref{cond: decrease of the trust-region radius} forces a correction on the radius after a failed step. 
            Finally, Condition~\ref{cond: increase of the trust-region radius} caps the rate at which the radius can grow, preventing arbitrary jumps that would disconnect $\Delta_k$ from the local geometry.
            These three conditions define two classes of update rules.
            \begin{definition}[Weak and strong admissibility]
                A trust-region radius update rule is said to be
                \begin{enumerate}[(i)]
                    \item \emph{weakly admissible} if it satisfies Condition~\ref{cond: delta_k lower bound}~and~\ref{cond: decrease of the trust-region radius};
                    \item  \emph{strongly admissible} if it satisfies Condition~\ref{cond: delta_k lower bound}~ and~\ref{cond: increase of the trust-region radius}.
                \end{enumerate}
                \label{def: admissibility}
            \end{definition}
            Strong admissibility is the property used throughout the analysis. It yields global convergence under linearly growing model Hessians (Theorem~\ref{thm: Liminf of the gradient norm under weaker assumption}) and the optimal worst-case complexity $\calO(\varepsilon^{-2})$ (Theorem~\ref{thm: Upper bound successful iter BTR}).
            Weak admissibility is sufficient under the stronger assumption that $\|H_k\|$ is uniformly bounded (Theorem~\ref{thm: Liminf of the gradient norm under strong assumption}).

            \subsection{Liminf behaviour of the gradient norm under strong assumption} \label{subsec:liminf behaviour of the gradient norm under strong assumption}

            We now exhibit that under the assumptions that the Hessians are uniformly bounded (AM.\ref{as: model hessian}) and the lower bound condition \ref{cond: delta_k lower bound}, along with the other standard assumptions, we can guarantee that the $\liminf$ of the gradient norm goes to zero when $k \to \infty$ under a condition on the update mechanism introduced below. It requires that the trust-region radius is uniformly contracted on unsuccessful iterations.
            \begin{theorem}[Lim inf of the gradient norm]
                Suppose that Algorithm~\ref{alg:generic BTR} is applied under Assumptions~AF.\ref{as: Bounded below}, AF.\ref{as: Lipschitz continuous gradient}, AM.\ref{as: model agreement with function}, AM.\ref{as: model hessian}, AA.\ref{as: Cauchy decrease}. 
                Assume furthermore that $\Delta_k$ follows the lower bound condition \ref{cond: delta_k lower bound} and the decrease condition \ref{cond: decrease of the trust-region radius} with $\myeta{dec} \geq \eta$, but $\myeta{dec} \neq 0$.
                Then, we have that
                \[
                    \liminf_{k \to \infty} \|g_k\| = 0,
                \]
                \label{thm: Liminf of the gradient norm under strong assumption}
            \end{theorem}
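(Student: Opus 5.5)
We argue by contradiction. Suppose $\liminf_{k\to\infty}\|g_k\|>0$. Since $\|g_k\|>0$ for every $k$ (otherwise the algorithm stops at a critical point), there exists $\epsilon>0$ with $\|g_k\|\geq\epsilon$ for all $k$, hence $\min_{0\le i\le k}\|g_i\|\geq\epsilon$ for all $k$. Under AM.\ref{as: model hessian}, the sequence $M_k$ defined in \eqref{eqn: M_k definition} satisfies $M_k\leq L+\mykappa{umh}=:M$. Condition~\ref{cond: delta_k lower bound} then gives the uniform lower bound $\Delta_k\geq \mykappa{lbd}\,\epsilon/M=:\Delta_{\min}>0$ for all $k$. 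The whole proof consists in showing that this uniform lower bound cannot coexist with the contraction of Condition~\ref{cond: decrease of the trust-region radius} and the boundedness of $f$.

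\textbf{Step 1: there are infinitely many successful iterations.} Suppose instead that $\rho_k<\eta$ for all $k\geq k_0$. Since $\eta\leq\myeta{dec}$, also $\rho_k<\myeta{dec}$, so Condition~\ref{cond: decrease of the trust-region radius} gives $\Delta_{k+1}\leq\underline{\gamma_2}\Delta_k$ for all $k\geq k_0$. Then $\Delta_k\to0$, which contradicts $\Delta_k\geq\Delta_{\min}$. If $\eta=0$ is allowed, we instead use $\myeta{dec}>0$: all iterations with $\rho_k<\myeta{dec}$ contract, and the argument below is carried out with $\myeta{dec}$ in place of $\eta$.

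\textbf{Step 2: infinitely many iterations with $\rho_k\geq\myeta{dec}$.} Repeating Step 1 with $\myeta{dec}$ in place of $\eta$ shows that the set $\mathcal{K}=\{k:\rho_k\geq\myeta{dec}\}$ is infinite. Every $k\in\mathcal{K}$ also satisfies $\rho_k\geq\eta$, so it is an accepted step. By AA.\ref{as: Cauchy decrease}, for each $k\in\mathcal{K}$,
\[
f(x_k)-f(x_{k+1})\geq \myeta{dec}\,\frac{\mykappa{mdc}}{2}\,\epsilon\min\Bigl[\frac{\epsilon}{\mykappa{umh}},\,\Delta_{\min}\Bigr]=:\delta>0 .
\]
On every other accepted iteration $\rho_k\geq\eta\geq0$, so $f$ does not increase; on rejected iterations $x_{k+1}=x_k$. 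Hence $f(x_0)-f(x_{k+1})\geq \delta\,|\mathcal{K}\cap[0,k]|$, and the right-hand side tends to infinity with $k$. This contradicts AF.\ref{as: Bounded below}, which completes the proof.

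\textbf{Main obstacle.} The only delicate point is the threshold bookkeeping when $\eta=0$ or $\eta<\myeta{dec}$. Guaranteed decrease is available only on iterations with $\rho_k\geq$ a positive constant, and the monotonicity of $f$ on the remaining accepted iterations relies on $\eta\geq0$. The hypothesis $\myeta{dec}\neq0$ is exactly what makes $\mathcal{K}$ well defined with a strictly positive decrease $\delta$. The lower bound $\Delta_{\min}$ is used twice: to rule out perpetual contraction in Steps 1 and 2, and to bound the Cauchy decrease from below in Step 2.
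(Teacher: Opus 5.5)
Your proof is correct and follows essentially the same route as the paper: you split on whether $\{k:\rho_k\geq\myeta{dec}\}$ is finite (then Condition~\ref{cond: decrease of the trust-region radius} forces $\Delta_k\to0$, which contradicts Condition~\ref{cond: delta_k lower bound}) or infinite (then the summed Cauchy decrease contradicts AF.\ref{as: Bounded below}). The differences are only in presentation: you argue by contradiction with a uniform $\epsilon$ and a constant per-iteration decrease $\delta$, whereas the paper sums $\min_{0\le i\le k}\|g_i\|^2$ directly, and your Step~1 is already covered by Step~2.
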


            \begin{proof}
                We analyse two cases:
                \begin{enumerate}[(a)]
                    \item First, suppose the set $\left\{ k \in \NN \, : \, \rho_k \geq \myeta{dec} \right\}$ is finite.
                    Then there exists an index \( k_0 \) such that for all \( k \geq k_0 \),  \( \rho_k < \myeta{dec} \).
                    Then, by Condition~\ref{cond: decrease of the trust-region radius} and Condition~\ref{cond: delta_k lower bound}, we have for all \( k \geq k_0 \),
                    \[
                        \frac{\mykappa{lbd}}{L + \mykappa{umh}} \min_{0 \leq i \leq k} \| g_i \| \leq  \Delta_k \leq \underline{\gamma_2}^{k - k_0} \Delta_{k_0} \to 0
                    \]
                    which implies that \( \liminf_{k \to \infty} \| g_k \| = 0. \)

                    \item Now, suppose that $\left\{ k \in \NN \, : \, \rho_k \geq \myeta{dec} \right\}$ is infinite.
                    By the Cauchy decrease condition~AA.\ref{as: Cauchy decrease}, AM.\ref{as: model hessian}, and Condition~\ref{cond: delta_k lower bound}, we have that for all \( k \) such that \( \rho_k \geq \myeta{dec} \),
                    \[
                        f(x_k) - f(x_{k+1})  \geq  \myeta{dec} \frac{\mykappa{mdc}}{2} \| g_k \| \min\left[ \frac{\Vert g_k \Vert }{\| H_k \|} \, , \, \Delta_k \right]  \geq  \myeta{dec} \frac{\mykappa{mdc} \, \mykappa{lbd}}{2 (L+\mykappa{umh})} \min_{0 \leq i \leq k} \| g_i \|^2.
                    \]
                    Therefore, we have that
                    \begin{equation}
                        \frac{\myeta{dec}\, \mykappa{mdc} \, \mykappa{lbd}}{2 (L+\mykappa{umh})}
                        \sum_{ \left\{ k  \, : \, \rho_k \geq \myeta{dec} \right\}} \left(\min_{0 \leq i \leq k} \| g_i \|^2\right) \leq \sum_k f(x_k) - f(x_{k+1}) \leq f(x_0) - \mykappa{lbf} < +\infty.
                        \label{eq:bound_normgradient}                    	
                    \end{equation}
                        Then, %from \eqref{eq:bound_normgradient},
                        %, as \( \sum_{k \in \calS} \left(\min_{0 \leq i \leq k} \| g_i \|^2\right) < +\infty \),
                        we have that \( \min_{0 \leq i \leq k} \| g_i \|^2 \) goes to 0 as \( k \) grows to $+\infty$, which implies that, 
                    $
                        \liminf_{k \to \infty} \| g_k \| = 0.
                    $
                \end{enumerate}
                The proof is complete.
            \end{proof}

        \subsection{Liminf behaviour of the gradient norm under weaker assumption} \label{subsec:liminf behaviour of the gradient norm under weaker assumption}

                We show that we can relax some assumptions on the Hessian but still guarantee that the $\liminf$ of the gradient norm goes to zero when $k \to \infty$, provided the update mechanism satisfies an additional condition and the growth of the model Hessians remains moderate. 
                The following Lemma~\ref{lem: sum of M_k}, which can be found in \cite{Powe84, Yuan85, Yuan00, GrapigliaYuanYuan15, GrapigliaYuanYuan16}.
                \begin{lemma}
                    Let $\{\Delta_k \}$ and $\{M_k\}$ be two sequences of real numbers such that $\Delta_k \geq \dfrac{\tau}{M_k} > 0$, where $\tau > 0$.
                    Let $J$ be a subset of $\NN$, $\underline{\gamma_2} < 1$, $\overline{\gamma_3} > 1$.
                    Assume furthermore that
                    \begin{align}
                        \Delta_{k+1} & \leq \overline{\gamma_3} \, \Delta_k \text{ for all }k \in J, \label{eq:sumMk_first}\\
                        \Delta_{k+1} & \leq \underline{\gamma_2} \, \Delta_k \text{ for all }k \notin J, \notag \\
                        M_{k+1} & \geq M_k \text{ for all } k \geq 0, \notag \\
                        \sum_{k \in J} \frac{1}{M_k} & < +\infty. \notag 
                    \end{align}
                    %                              \item $a_{k+1} \leq \underline{\gamma_2} \, a_k$ for all $k \notin J$;
                    Then, we have that
                    \[
                    \sum_{k=1}^\infty \frac{1}{M_k} < +\infty.
                    \]
                    \label{lem: sum of M_k}
                \end{lemma}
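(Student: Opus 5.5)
The plan is to exploit the multiplicative structure of the radius recursion, i.e.\ to work in logarithms. Write $n_J(k) = |J \cap \{0,\dots,k-1\}|$ for the number of ``expansion-allowed'' indices before $k$. Iterating the two hypotheses on $\Delta_{k+1}$ gives
\[
\Delta_k \;\leq\; \Delta_0\, \overline{\gamma_3}^{\,n_J(k)}\, \underline{\gamma_2}^{\,k-n_J(k)} .
\]
Combining this with the lower bound $\Delta_k \geq \tau/M_k$ and taking logarithms yields a counting inequality of the form
\[
k - n_J(k) \;\leq\; c_0 + c_1\, n_J(k) + c_2 \log M_k ,
\]
with $c_1 = \log\overline{\gamma_3}/|\log\underline{\gamma_2}|$, $c_2 = 1/|\log\underline{\gamma_2}|$, and $c_0$ depending on $\Delta_0$ and $\tau$. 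So the number of contraction indices up to $k$ is controlled linearly by the number of $J$-indices and logarithmically by $M_k$. If $J$ is finite, this already forces $M_k$ to grow geometrically, so $\sum 1/M_k<\infty$ trivially. Hence the interesting case is $J=\{j_1<j_2<\dots\}$ infinite.

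In that case I would split $\sum_k 1/M_k$ into the $J$-part, finite by hypothesis, and the non-$J$ part. For the non-$J$ part I would group the indices into the blocks $(j_i, j_{i+1})$ of consecutive contractions. There are two bounds available on such a block. The first is the trivial one from monotonicity: $1/M_k \leq 1/M_{j_i}$ for $k>j_i$. The second is a geometric one: for $k = j_i + 1 + t$ inside the block, $\tau/M_k \leq \Delta_k \leq \underline{\gamma_2}^{t}\,\Delta_{j_i+1}$. The aim is to show that each block contributes at most a constant multiple of $1/M_{j_i}$, plus a summable remainder, using the following steps.
\begin{enumerate}[(i)]
\item Use the geometric bound to show that, once $t$ exceeds roughly $c_2\log(M_{j_i}\Delta_{j_i+1}/\tau)$, the terms $1/M_k$ decay like $\underline{\gamma_2}^{t}$ relative to $\Delta_{j_i+1}/\tau$.
\item Use the global counting inequality, via an Abel-summation argument over $i$, to control how large $\Delta_{j_i+1}M_{j_i}$ can be on average.
\item Conclude that $\sum_{k\notin J}1/M_k \leq C\sum_{i} 1/M_{j_i} + C' < \infty$.
\end{enumerate}

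The main obstacle is the logarithmic term $c_2\log M_k$. A naive bound charges each block roughly $\log M_{j_i}/M_{j_i}$, and $\sum_i \log M_{j_i}/M_{j_i}$ need not converge merely because $\sum_i 1/M_{j_i}$ does; for example, $M_{j_i}\sim i\log^2 i$ defeats it. My first attempt would be to avoid the logarithm altogether. The idea is a potential-function (amortisation) argument on $\Phi_k = \Delta_k M_k$. Along contraction steps $\Phi$ cannot fall below $\tau$, so the ``budget'' $\log(\Phi/\tau)$ is consumed at rate $|\log\underline{\gamma_2}|$ per contraction. It is replenished only at $J$-steps, by at most $\log\overline{\gamma_3}$, or through growth of $M$. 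I would then charge each non-$J$ index $k$ to the earlier $J$-index or $M$-increase that created the budget it consumes, weighting the charge by $1/M_k \leq 1/M_{\text{charged index}}$. Budget created by growth of $M$ from $M_a$ to $M_b$ pays for about $\log(M_b/M_a)$ steps, each weighted by at most $1/M_b$. The hope is that these contributions telescope into something bounded by a convergent integral $\int dM/M^2$, while budget created at $J$-steps is bounded by a constant times $\sum_J 1/M_j$. If this charging scheme closes, it gives the lemma; if it does not, I would fall back on the cited arguments of \cite{Powe84, Yuan00}.
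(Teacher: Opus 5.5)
Your counting inequality and the finite-$J$ case are correct. For infinite $J$, however, which is the entire content of the lemma, nothing is actually proved: steps (i)--(iii) and the charging scheme are stated as aims, and you leave open whether the scheme closes. The per-block target also cannot be met in terms of $1/M_{j_i}$ alone. A run of $r$ consecutive $J$-indices at essentially constant $M$ may inflate $\Delta_{j_i+1}M_{j_i}$ by $\overline{\gamma_3}^{\,r}$, after which the next block can have length of order $r$. The bound therefore has to be amortised over earlier $J$-indices, and that is exactly the unspecified step (ii).

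The paper only cites \citet{Powe84}, whose argument supplies the missing idea: split the indices by the \emph{density} of $J$ rather than into blocks. Fix an integer $p$ with $\overline{\gamma_3}\,\underline{\gamma_2}^{\,p-1}<1$.
If $k\ge 1$ and $n_J(k)\ge k/p$, the $\lceil k/p\rceil$-th element of $J$ is at most $k-1$. Monotonicity then gives $1/M_k\le 1/M_{j_{\lceil k/p\rceil}}$, and each $j_i$ is charged by at most $p$ such $k$, so these terms sum to at most $p\sum_{j\in J}1/M_j$.
If $n_J(k)<k/p$, your iterated product bound gives $\tau/M_k\le\Delta_k\le\Delta_0\lambda^k$ with $\lambda=(\overline{\gamma_3}\,\underline{\gamma_2}^{\,p-1})^{1/p}<1$, so these terms are geometrically summable.
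The logarithm you flag as the main obstacle is an artefact of the block decomposition. Exponentiated on the sparse index set, your own counting inequality already bounds $1/M_k$ geometrically.

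Alternatively, your potential $\Delta_k M_k$ does close, and it gives a genuinely different proof. Put $a=|\ln\underline{\gamma_2}|$, $b=\ln\overline{\gamma_3}$ and $\phi_k=\ln(\Delta_k M_k/\tau)\ge 0$. The hypotheses give $a\,\mathbf{1}_{k\notin J}\le \phi_k-\phi_{k+1}+b\,\mathbf{1}_{k\in J}+\ln(M_{k+1}/M_k)$.
Multiply by $w_k=1/M_{k+1}$, not by $1/M_k$: with $1/M_k$ the term $\ln(M_{k+1}/M_k)/M_k$ need not be summable (it equals $1$ for all $k$ if $M_{k+1}=M_k\exp(M_k)$).
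Since $w_k$ is nonincreasing and $\phi_k\ge 0$, Abel summation gives $\sum_{k=0}^{N} w_k(\phi_k-\phi_{k+1})\le w_0\phi_0$. The inequality $\ln x\le x-1$ gives $\ln(M_{k+1}/M_k)/M_{k+1}\le 1/M_k-1/M_{k+1}$, which telescopes to at most $1/M_0$; this is the $\int dM/M^2$ you anticipated.
Hence $a\sum_{k\notin J}1/M_{k+1}\le \phi_0/M_1+1/M_0+b\sum_{k\in J}1/M_k$. Adding $\sum_{k\in J}1/M_{k+1}\le\sum_{k\in J}1/M_k$ gives $\sum_{k\ge 1}1/M_k<\infty$.
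Compared with Powell's split, this needs no choice of $p$ and no block bookkeeping, and it yields the multiplier $1+b/a$ in front of $\sum_{J}1/M_k$, where Powell's argument needs $p>1+b/a$. Powell's route, in turn, uses nothing beyond the iterated product bound. One of the two must be written out for the proposal to become a proof.
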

                \begin{proof}
                    See \citet{Powe84} for a proof of the lemma.
                \end{proof}
                % We now discuss the importance of \eqref{eq:sumMk_first}.
                % \citet{Powe84} considers the set $J_1 = \{ k \text{ s.t. } k \leq p q(k) \}$
                % where $q(k) = \#\{ \calS \cap \{0, \ldots, k\} \}$,
                % and $p$ is defined as the smallest integer such that $\overline{\gamma_3} \, \underline{\gamma_2}^{p-1} < 1$.
                % This allows to control the first sum as
                % \[
                %     \sum_{k \in J_1} \frac{1}{M_k} \leq p \sum_{k \in J} \frac{1}{M_k} < +\infty,
                % \]
                % and the second sum \(\sum_{k \notin J_1} \frac{1}{M_k}\) separately.
                % For the latter one, we can compare the sum to a geometric series involving $\overline{\gamma_3} \underline{\gamma_2}^{p-1}$.
                % In the situation $k \notin J_1$, for every successful we have at least $p$ iterations that decrease the trust-region radius by a factor of $\underline{\gamma_2}$. Then, we can derive a bound of the form, 
                % \[
                %     \sum_{k \notin J_1} \frac{1}{M_k} \leq \dfrac{\Delta_1}{\tau \, \underline{\gamma_2} \left( 1 - \left(\overline{\gamma_3} \underline{\gamma_2}^{p-1}\right)^{\frac{1}{p}} \right)}.
                % \]
                The following discussion highlights the importance of condition~\eqref{cond: increase of the trust-region radius} to ensure the liminf behaviour of the gradient norm. We present an example where all conditions in Lemma~\ref{lem: sum of M_k} are satisfied except for \eqref{eq:sumMk_first}, 
                and the sum \( \sum_{k=1}^\infty \frac{1}{M_k} \) diverges.
                Take  \( M_k = k \), $\tau = 1$, some constant $\gamma \in (0, 1)$, and  define the sequence \( \{\Delta_k\}_{k \geq 0} \) as
                \[
                    \begin{cases}
                        \Delta_k = \dfrac{1}{k}, & \text{if } k = \ell^2 \text{ for some } \ell \geq 1, \\
                        \Delta_{k+p} = \dfrac{\gamma^{-2\ell - 1 + p}}{k+p} & \text{ for }  p = 1, 2, \ldots, 2\ell
                    \end{cases}
                \]
                This means that \( \Delta_k \) decreases by a factor of \( \gamma \) for \( k \) between two consecutive squares, and at the squares, it jumps to a much larger value, for example,  
                \[
                    \Delta_1 = 1, \, \Delta_2 = \frac{\gamma^{-2}}{2}, \, \Delta_3 = \frac{\gamma^{-1}}{3}, \, \Delta_4 = \frac{1}{4}, \, \Delta_5 = \frac{\gamma^{-4}}{5}, \, \Delta_6 = \frac{\gamma^{-3}}{6}, \, \Delta_7 = \frac{\gamma^{-2}}{7}, \, \Delta_8 = \frac{\gamma^{-1}}{8}, \, \Delta_9 = \frac{1}{9}, \ldots
                \]
                Therefore, we see that all $M_k$ are admissible, meaning that $\Delta_k \geq \frac{1}{M_k}$, for all $k \geq 1$.
                Now consider the set \( J = \{ \ell^2 \, : \, \ell \geq 1 \} \). Then, 
                \[
                    \sum_{k \in J} \frac{1}{M_{k}} = \sum_{\ell \geq 1} \frac{1}{\ell^2} < +\infty.
                \]
                We also have for all $k$, $M_{k+1} \geq M_k$, and for all $k \notin J$, $\Delta_{k+1} = \gamma \Delta_k$. But, we can't define any constant $\overline{\gamma_3} > 1$ such that $\Delta_{k+1} \leq \overline{\gamma_3} \Delta_k$ for all $k \in J$.
                Indeed, for $k \in J$, i.e. $k = \ell^2$ for some $\ell \geq 1$, we have that $\Delta_{k+1} = \dfrac{\gamma^{-2\ell}}{\ell^2+1}$ and $\Delta_k = \frac{1}{\ell^2}$, which means that
                \[
                    \limsup_{k \to +\infty\, ; \, k \in J} \frac{\Delta_{k+1}}{\Delta_k} = \limsup_{\ell \to +\infty} \frac{\gamma^{-2\ell} \, \ell^2}{\ell^2 + 1} = +\infty.
                \]
                Therefore, we have that the condition~\eqref{eq:sumMk_first} is crucial in Lemma~\ref{lem: sum of M_k} to ensure that the sum \( \sum_{k=1}^\infty \frac{1}{M_k} \) converges. \\
                Moreover, when \( \limsup_{k \to +\infty\, ; \, k \in J} \frac{\Delta_{k+1}}{\Delta_k} < +\infty \), then we can find a constant \( \overline{\gamma_3} > 1 \) such that \( \Delta_{k+1} \leq \overline{\gamma_3} \Delta_k \) for all \( k \in J \). 
                This leads to the following question.  
                Is condition~\eqref{eq:sumMk_first} necessary, or can it be relaxed to \( \limsup_{k \to +\infty\, ; \, k \in J} \frac{\Delta_{k+1}}{\Delta_k} = +\infty \) with some specific growth? 
                Also, is it sufficient to require that the sequence \( \{\Delta_k\} \) is bounded above? Which is a standard condition in mathematical programming, i.e. \( \Delta_k \leq \Delta_{\max} \) for all \( k \geq 0 \).
                \begin{theorem}[Liminf of the gradient norm]
                    Suppose that the trust-region algorithm (Algorithm~\ref{alg:generic BTR}) is applied under Assumptions~AF.\ref{as: Bounded below}, AF.\ref{as: Lipschitz continuous gradient}, AM.\ref{as: model agreement with function}, AM.\ref{as: Linear growth bound model Hessian}, and the Cauchy decrease AA.\ref{as: Cauchy decrease}. 
                    Furthermore, $\Delta_k$ satisfies the lower bound condition \ref{cond: delta_k lower bound}, and the increase-decrease condition~\ref{cond: increase of the trust-region radius} with $\myeta{inc} \geq \eta$, but $\myeta{inc} \neq 0$.
                    Then, we have
                    \begin{equation}
                        \liminf_{k \to \infty} \|g_k\| = 0.
                    \end{equation}
                    \label{thm: Liminf of the gradient norm under weaker assumption}
                \end{theorem}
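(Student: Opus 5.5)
We argue by contradiction. Suppose $\liminf_{k\to\infty}\|g_k\| > 0$. Since all $g_k$ are nonzero from some index on (otherwise the algorithm stops at a critical point), there exists $\epsilon>0$ with $\|g_k\|\geq \epsilon$ for all $k$. Consequently $\min_{0\le i\le k}\|g_i\|\geq \epsilon$ for every $k$. Condition~\ref{cond: delta_k lower bound} then gives $\Delta_k \geq \tau/M_k$ with $\tau = \mykappa{lbd}\,\epsilon>0$. The $M_k$ are non-decreasing by construction, and $\|H_k\|\leq M_k$ by AM.\ref{as: Linear growth bound model Hessian}. We set $J = \{k : \rho_k \geq \myeta{inc}\}$. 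Condition~\ref{cond: increase of the trust-region radius} supplies exactly the two multiplicative inequalities required by Lemma~\ref{lem: sum of M_k}: $\Delta_{k+1}\leq\overline{\gamma_3}\Delta_k$ on $J$ and $\Delta_{k+1}\leq\underline{\gamma_2}\Delta_k$ off $J$. The whole proof therefore reduces to showing $\sum_{k\in J} 1/M_k<+\infty$. Lemma~\ref{lem: sum of M_k} then yields $\sum_k 1/M_k<+\infty$, which contradicts \eqref{eqn: linear growth bound on model Hessian}.

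To bound $\sum_{k\in J}1/M_k$, we use the decrease obtained on iterations in $J$. Since $\myeta{inc}\geq\eta$, every $k\in J$ is successful, so $x_{k+1}=x_k+s_k$. Moreover, $f(x_k)-f(x_{k+1})\geq \myeta{inc}\,(m_k(0)-m_k(s_k))$. By AA.\ref{as: Cauchy decrease}, together with $\|H_k\|\leq M_k$ and $\Delta_k\geq \tau/M_k$,
\[
f(x_k)-f(x_{k+1}) \;\geq\; \myeta{inc}\,\frac{\mykappa{mdc}}{2}\,\epsilon\,\min\!\left[\frac{\epsilon}{M_k},\,\frac{\mykappa{lbd}\,\epsilon}{M_k}\right] \;=\; \myeta{inc}\,\frac{\mykappa{mdc}\,\mykappa{lbd}\,\epsilon^2}{2\,M_k},
\]
where the last equality uses $\mykappa{lbd}\leq 1$. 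On iterations outside $J$, the function value does not increase. This holds whether or not the step is accepted: if $\rho_k<\eta$ the step is rejected and $f$ is unchanged, and if $\eta\le \rho_k<\myeta{inc}$ the step is accepted with $\rho_k\ge 0$, so $f$ does not increase. Summing over $k$ and using AF.\ref{as: Bounded below} gives
\[
\frac{\myeta{inc}\,\mykappa{mdc}\,\mykappa{lbd}\,\epsilon^2}{2}\sum_{k\in J}\frac{1}{M_k}\;\leq\; f(x_0)-\mykappa{lbf}<+\infty .
\]
This is exactly the summability hypothesis needed in Lemma~\ref{lem: sum of M_k}.

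The main obstacle is the bookkeeping that ensures Lemma~\ref{lem: sum of M_k} applies verbatim. One case to handle is $J$ finite: the sum over $J$ is then trivially finite, and the lemma still applies, or more directly, $\Delta_k\to 0$ geometrically contradicts $\Delta_k\geq \tau/M_k$ only if $M_k$ grows slowly, so invoking the lemma is cleaner. We must also check that the lower bound $\Delta_k\geq\tau/M_k$ holds for \emph{every} $k$, not merely for $k\in J$, which is why the running minimum in Condition~\ref{cond: delta_k lower bound} is used. Finally, the contradiction requires that $\{M_k\}$ in Condition~\ref{cond: delta_k lower bound} is the same sequence as in AM.\ref{as: Linear growth bound model Hessian}, i.e. \eqref{eqn: M_k definition}. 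The remaining step, deriving the contradiction with \eqref{eqn: linear growth bound on model Hessian}, is immediate.
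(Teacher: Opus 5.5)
Your proof is correct and is essentially the paper's argument. You assume $\liminf_{k\to\infty}\|g_k\|>0$, use Condition~\ref{cond: delta_k lower bound} to get $\Delta_k\geq \tau/M_k$ with $\tau=\mykappa{lbd}\,\epsilon$, apply the Cauchy decrease on $J=\{k:\rho_k\geq\myeta{inc}\}$ to obtain $\sum_{k\in J}1/M_k<+\infty$, and then invoke Lemma~\ref{lem: sum of M_k} to contradict AM.\ref{as: Linear growth bound model Hessian}. The differences are minor: the paper handles finite $J$ as a separate case through the geometric decay $\Delta_k\leq\underline{\gamma_2}^{k-k_0}\Delta_{k_0}$, whereas you fold it into the lemma, which is legitimate because the sum over a finite $J$ is trivially finite. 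You also make explicit two points the paper leaves implicit: that $f$ does not increase off $J$, and that a uniform lower bound on $\min_{0\le i\le k}\|g_i\|$ is needed.
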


                \begin{proof}
                    Again, we analyse two cases.
                    \begin{enumerate}[(a)]
                        \item Suppose $| \left\{ k \in \NN \, : \, \rho_k \geq \myeta{inc} \right\} | < +\infty$, then there exists an index \( k_0 \) such that for all \( k \geq k_0 \),  \( \rho_k < \myeta{inc} \).
                        By Condition~\ref{cond: delta_k lower bound} and Condition~\ref{cond: decrease of the trust-region radius}, we have that for all \( k \geq k_0 \),
                        \[
                            \frac{\mykappa{lbd}}{M_k}  \| g_k \| \leq \Delta_k \leq \underline{\gamma_2}^{k - k_0} \Delta_{k_0}.
                        \]
                        Since $\sum_{k \ge 0} \frac{\|g_k\|}{M_k} < +\infty$, then $\liminf_{k \to \infty} \|g_k\| > 0$ would imply $\sum_{k \ge 0} \frac{1}{M_k} < +\infty$, contradicting  Assumption~AM.\ref{as: Linear growth bound model Hessian}.  Hence, $\liminf_{k \to \infty} \|g_k\| = 0$.

                        \item Now consider that $| \left\{ k \in \NN \, : \, \rho_k \geq \myeta{inc} \right\} | = +\infty$.
                        Again, suppose that \( \liminf_{k \to \infty} \| g_k \| > 0 \). 
                        By the Cauchy decrease condition~AA.\ref{as: Cauchy decrease} and Condition~\ref{cond: delta_k lower bound}, we have for all large enough \( k \) such that \( \rho_k \geq \myeta{inc} \),
                        \[
                            f(x_k) - f(x_{k+1})  \geq  \myeta{inc} \frac{\mykappa{mdc}}{2} \Vert g_k \Vert \min\left[ \frac{\Vert g_k \Vert }{\| H_k \|} \, , \, \Delta_k \right]  \geq  \myeta{inc} \frac{\mykappa{mdc} \, \mykappa{lbd}}{2} \left(\frac{\min_{0 \leq i \leq k} \| g_i \|^2}{M_k}\right) \geq  \frac{\myeta{inc} \, \mykappa{mdc} \, \mykappa{lbd} \, \varepsilon^2}{2} \frac{1}{M_k}.
                        \]
                        Therefore, by similar arguments seen before, $\sum_{\{k : \rho_k \geq \myeta{inc}\}} \frac{1}{M_k} < +\infty$. 
                        % But, for all \( k \) large enough, we have that \( \min_{0 \leq i \leq k} \| g_i \|^2 \geq \varepsilon^2 \), which implies that
                        % $
                        %     \sum_{\left\{ k \, : \, \rho_k \geq \myeta{inc} \right\}} \left(\frac{1}{M_k}\right) < +\infty.
                        % $
                        Also, from the lower bound condition~\ref{cond: delta_k lower bound},  $\Delta_k \geq \frac{\tau}{M_k}$, where $ \tau = \mykappa{lbd} \, \varepsilon > 0$. 
                        % From condition~\ref{cond: decrease of the trust-region radius}, for all \( k \notin \calS \), $\Delta_{k+1} \leq \underline{\gamma_2} \Delta_k$ with $\underline{\gamma_2} < 1$, and, from condition~\ref{cond: increase of the trust-region radius}, we have that for all \( k \in \calS \), $\Delta_{k+1} \leq \overline{\gamma_3} \Delta_k$ with $\overline{\gamma_3} > 1$. 
                        Then, by applying lemma~\ref{lem: sum of M_k} with \( J = \left\{ k \in \NN \, : \, \rho_k \geq \myeta{inc} \right\}  \), we have that $ \sum_{k \geq 0} \frac{1}{M_k} < +\infty$.  
                        This contradicts assumption~AM.\ref{as: Linear growth bound model Hessian} meaning that  that \( \liminf_{k \to \infty} \| g_k \| = 0 \).
                    \end{enumerate}
                    The proof is complete.
                \end{proof}

    \section{First-order worst-case complexity} \label{sec:first-order worst-case complexity}
        
            Given \( \varepsilon > 0 \), a \emph{first-order complexity result} is a bound on the number of iterations required until \( \|g_k\| \leq \varepsilon \), 
            \begin{equation}
                T_{\varepsilon}^g = \inf \left\{ k \in \mathbb{N} \, : \, \|g_k\| \leq \varepsilon \right\},
                \label{eqn: first-order stop time}
            \end{equation}
            called an $\varepsilon$\textit{-first-order point}.
            
            Following~\cite{ConnGoulToin00, CartGoulToin10, CartGoulToin12, CartGoulToin22, CurtisLubbertsRobinson2018, GrapigliaYuanYuan15, GrapigliaYuanYuan16, Toin13}, we bound the total number of iterations to reach an $\varepsilon$-first-order point. All update mechanisms satisfying the conditions of Theorem~\ref{thm: Liminf of the gradient norm under weaker assumption} share the same worst-case complexity.
            Let us define their restrictions up to iteration $j$,
			$
                \mathcal{S}_j = \{ 0 \leq k \leq j \, : \, k \in \mathcal{S} \}
            $
            and
            $
            \mathcal{U}_j = \{ 0 \leq k \leq j \, : \, k \in \mathcal{U}\}.
            $ 
        \begin{theorem}[First-order worst-case complexity]
            Suppose that Algorithm~\ref{alg:generic BTR} is applied with $\eta > 0$ under assumptions AF.\ref{as: Bounded below}, AF.\ref{as: Lipschitz continuous gradient}. The model follows AM.\ref{as: model agreement with function}  with uniform bound on the Hessian AM.\ref{as: model hessian} and the steps verify the Cauchy decrease AA.\ref{as: Cauchy decrease}. Moreover, $\Delta_k$ follows the lower bound condition \ref{cond: delta_k lower bound} and the decrease condition~\ref{cond: decrease of the trust-region radius} with $\myeta{dec} \geq \eta$.  
            Then, the trust-region method attains a  $\varepsilon$-first-order point in at most
            $
                T_{\varepsilon}^g = \calO\left( \varepsilon^{-2} \right)
            $
            where \( \varepsilon > 0 \) is the desired accuracy in the norm of the gradient. 
            We get the following upper bounds,
                \begin{equation}
                    \vert \{ k < T_{\varepsilon} \, : \, \rho_k \geq \myeta{dec} \} \vert \leq \dfrac{2(f(x_0) - \mykappa{lbf}) \big(L + \mykappa{umh}\big)}{\myeta{dec}   \, \mykappa{mdc} \, \mykappa{lbd}} \, \varepsilon^{-2}  
                    \label{eqn: Upper bound successful iter BTR}
                \end{equation}
                and, with the increase-decrease condition~\ref{cond: increase of the trust-region radius} ($\myeta{inc} \geq \eta$),  the number of unsuccessful iterations can be bounded by,
                \begin{equation}
                    \vert \mathcal{U}_{T_{\varepsilon} - 1} \vert \leq \vert \{ k < T_{\varepsilon} \, : \, \rho_k < \myeta{inc} \} \vert  \leq \dfrac{1}{\vert \ln\underline{\gamma_2} \vert} \ln\left(\frac{\Delta_0 (L+\mykappa{umh})}{\mykappa{lbd}} \varepsilon^{-1} \right) + \dfrac{\ln \overline{\gamma_3}}{\vert \ln \underline{\gamma_2} \vert }	\vert \{ k < T_{\varepsilon} \, : \, \rho_k \geq \myeta{inc} \} \vert 
                    \label{eqn: Upper bound unsuccessful iter BTR}
                \end{equation}
			\label{thm: Upper bound successful iter BTR}
        \end{theorem}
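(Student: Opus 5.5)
The plan follows the classical two-part count of \citet{CartGoulToin22} and \citet{GrapigliaYuanYuan15}: bound the ``successful'' iterations through the function decrease, then bound the ``unsuccessful'' ones through the radius dynamics. Fix $\varepsilon>0$ and assume $T_\varepsilon\geq 1$. For every $k<T_\varepsilon$ we have $\|g_i\|>\varepsilon$ for all $i\leq k$, so $\min_{0\le i\le k}\|g_i\|>\varepsilon$. Under AM.\ref{as: model hessian}, $M_k\leq L+\mykappa{umh}$, and Condition~\ref{cond: delta_k lower bound} then gives, for every $k<T_\varepsilon$,
\[
\Delta_k \;\geq\; \frac{\mykappa{lbd}\,\varepsilon}{L+\mykappa{umh}} \;=:\; \Delta_{\min}(\varepsilon).
\]

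\textbf{Bound \eqref{eqn: Upper bound successful iter BTR}.} Take $k<T_\varepsilon$ with $\rho_k\geq\myeta{dec}\geq\eta$. Such a step is accepted, so AA.\ref{as: Cauchy decrease} yields
\[
f(x_k)-f(x_{k+1}) \;\geq\; \myeta{dec}\frac{\mykappa{mdc}}{2}\,\varepsilon\min\Bigl[\tfrac{\varepsilon}{\|H_k\|},\Delta_k\Bigr] \;\geq\; \frac{\myeta{dec}\,\mykappa{mdc}\,\mykappa{lbd}}{2(L+\mykappa{umh})}\,\varepsilon^2 .
\]
The last inequality uses $\mykappa{lbd}\le 1$ and $\|H_k\|\le L+\mykappa{umh}$, so both terms of the min exceed $\mykappa{lbd}\varepsilon/(L+\mykappa{umh})$; this is exactly the computation in case (b) of Theorem~\ref{thm: Liminf of the gradient norm under strong assumption}. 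Because $\eta>0$ (or $\eta=0$ with $f$ nonincreasing on rejected steps, since $x_{k+1}=x_k$), $f(x_k)$ is nonincreasing along the iterates. Summing the displayed inequality over those $k$ and telescoping against $f(x_0)-\mykappa{lbf}$ gives \eqref{eqn: Upper bound successful iter BTR} directly.

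\textbf{Bound \eqref{eqn: Upper bound unsuccessful iter BTR}.} Write $n_+$ and $n_-$ for the numbers of $k<T_\varepsilon$ with $\rho_k\geq\myeta{inc}$ and $\rho_k<\myeta{inc}$, respectively. Condition~\ref{cond: increase of the trust-region radius} gives
\[
\Delta_{T_\varepsilon}\;\leq\;\Delta_0\,\overline{\gamma_3}^{\,n_+}\,\underline{\gamma_2}^{\,n_-}.
\]
This is the step needing care, because we need a lower bound on $\Delta_{T_\varepsilon}$ itself. Condition~\ref{cond: delta_k lower bound} at index $T_\varepsilon$ only involves $\min_{i\le T_\varepsilon}\|g_i\|$, which may be $\le\varepsilon$. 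I would therefore stop the product at $T_\varepsilon-1$: apply the expansion/contraction to the first $T_\varepsilon-1$ transitions only and use $\Delta_{T_\varepsilon-1}\geq\Delta_{\min}(\varepsilon)$. This changes the counts by at most one iteration, and that term can be absorbed or handled by counting indices $k<T_\varepsilon-1$. Taking logarithms gives
\[
n_-|\ln\underline{\gamma_2}|\;\leq\;\ln\bigl(\Delta_0/\Delta_{\min}(\varepsilon)\bigr)+n_+\ln\overline{\gamma_3},
\]
which is \eqref{eqn: Upper bound unsuccessful iter BTR} once we substitute $\Delta_{\min}(\varepsilon)$. The first inequality in \eqref{eqn: Upper bound unsuccessful iter BTR} holds because $\myeta{inc}\geq\eta$, so $\mathcal U\subseteq\{\rho_k<\myeta{inc}\}$. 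If I cannot make the boundary bookkeeping land exactly on the stated constant, I would accept an additive $+1$.

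\textbf{Assembling $T_\varepsilon=\calO(\varepsilon^{-2})$.} Write $T_\varepsilon=n_++n_-$. The unsuccessful count is bounded by $\calO(\ln\varepsilon^{-1})+c\,n_+$, so it remains to bound $n_+$ by $\calO(\varepsilon^{-2})$. The function-decrease argument above only counts iterations with $\rho_k\geq\myeta{dec}$, so I must relate the two thresholds. The natural choice is to take $\myeta{dec}=\myeta{inc}$; this is legitimate since Condition~\ref{cond: increase of the trust-region radius} already contains a contraction clause, so it implies Condition~\ref{cond: decrease of the trust-region radius} with $\myeta{dec}:=\myeta{inc}$. Alternatively, if $\myeta{dec}\le\myeta{inc}$, the successful count with $\myeta{inc}$ is bounded by that with $\myeta{dec}$. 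With either choice $n_+=\calO(\varepsilon^{-2})$, and hence $T_\varepsilon=\calO(\varepsilon^{-2})+\calO(\ln\varepsilon^{-1})=\calO(\varepsilon^{-2})$.

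I expect the main obstacle to be this threshold alignment together with the off-by-one at $T_\varepsilon$ in the radius lower bound, not the decrease estimate itself.
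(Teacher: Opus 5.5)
Your proposal is correct and follows the paper's proof. You count the iterations with large $\rho_k$ via the Cauchy decrease and $\Delta_k\ge\mykappa{lbd}\,\varepsilon/(L+\mykappa{umh})$ for $k<T_\varepsilon$, then count contractions by chaining Condition~\ref{cond: increase of the trust-region radius} and taking logarithms. The one omission is that you never justify $T_\varepsilon<\infty$; take it from Theorem~\ref{thm: Liminf of the gradient norm under strong assumption} as the paper does, or obtain it by running your counts up to $\min\{T_\varepsilon,N\}$.

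Both issues you flag are real, and the paper's proof passes over them. It chains the radius bound all the way to $\Delta_{T_\varepsilon}$, where Condition~\ref{cond: delta_k lower bound} no longer gives $\Delta_{\min}(\varepsilon)$. It also writes $\myeta{dec}$ where $\myeta{inc}$ is meant. Your fixes for both are sound.

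To settle the $+1$, note that iteration $T_\varepsilon-1$ is necessarily accepted; otherwise $x_{T_\varepsilon}=x_{T_\varepsilon-1}$ and so $g_{T_\varepsilon}=g_{T_\varepsilon-1}$. Stopping the product at $\Delta_{T_\varepsilon-1}$ then gives the stated bound on $|\mathcal{U}_{T_\varepsilon-1}|$ exactly. It also gives the stated bound on the middle count $|\{k<T_\varepsilon:\rho_k<\myeta{inc}\}|$ when $\myeta{inc}=\eta$.

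For $\myeta{inc}>\eta$, the middle count can genuinely exceed the stated right-hand side by one. For example, take $T_\varepsilon=1$ with $\eta\le\rho_0<\myeta{inc}$ and $\Delta_0<\Delta_{\min}(\varepsilon)/\underline{\gamma_2}$. This is harmless for the $\calO(\varepsilon^{-2})$ conclusion.
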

        \begin{proof}
            By Theorem~\ref{thm: Liminf of the gradient norm under strong assumption}, we have that \( \liminf_{k \to \infty} \| g_k \| = 0 \), and $ T_{\varepsilon} < +\infty $ for all \( \varepsilon > 0 \). 
            Then, for all \( k < T_{\varepsilon} \), we have with Condition~\ref{cond: delta_k lower bound} and AM.\ref{as: model hessian} that $ \Delta_k \geq \frac{\mykappa{lbd} }{L + \mykappa{umh}}  \, \varepsilon$. Moreover, when $k \in  \{ k < T_{\varepsilon} \, : \, \rho_k \geq \myeta{dec} \} $, we have that
            $
                f(x_k) - f(x_{k+1})  \geq  \frac{\myeta{dec}  \, \mykappa{mdc} \, \mykappa{lbd}}{2 \big(L + \mykappa{umh}\big)}  \varepsilon^2,
            $
            from the Cauchy decrease condition~AA.\ref{as: Cauchy decrease}. Therefore, 
            \[ 
             +\infty > f(x_0) - \mykappa{lbf} \; \geq \; \sum_{ \rho_k \geq \myeta{dec}} f(x_k) - f(x_{k+1}) \; \geq \; \vert \{ k < T_{\varepsilon} \, : \, \rho_k \geq \myeta{dec} \} \vert \frac{\myeta{dec}  \, \mykappa{mdc} \, \mykappa{lbd}}{2 \big(L + \mykappa{umh}\big)}  \varepsilon^2, 
             \]
			which gives \eqref{eqn: Upper bound successful iter BTR}.
				% \begin{equation}
				% 	| \mathcal{S}_{T_{\varepsilon} - 1} | \leq \dfrac{2\big(f(x_0) - \mykappa{lbf}\big)}{\eta \mykappa{mdc} \mykappa{lbd}} \, \varepsilon^{-2} < +\infty.
				% 	\label{proof: sum of succ. iterations}
				% \end{equation} 
            \\
			Now, let's bound the number of unsuccessful iterations $\vert \mathcal{U}_{T_{\varepsilon} - 1} \vert$.
            For this purpose, we assume that the update mechanism satisfies the increase-decrease condition~\ref{cond: increase of the trust-region radius} with $\myeta{inc} \geq \eta$.
            The following arguments follow \citet{GrapigliaYuanYuan15, GrapigliaYuanYuan16}. We adapt it to the class of strongly admissible mechanisms.
            From condition~\ref{cond: delta_k lower bound} that
            $ \dfrac{1}{\Delta_k}  \leq \dfrac{L + \mykappa{umh}}{\mykappa{lbd}} \varepsilon^{-1} $, for all \( k < T_{\varepsilon}  \). 
            Then, from condition~\ref{cond: increase of the trust-region radius},  
            for $ \rho_k \geq \myeta{dec} $, we have that
            $\dfrac{1}{\overline{\gamma_3} \, \Delta_k} \leq \dfrac{1}{ \Delta_{k+1}}$, otherwise $\dfrac{1}{\underline{\gamma_2} \, \Delta_k} \leq \dfrac{1}{\Delta_{k+1}}$.
            Therefore, we can write
            \begin{equation*}
                \frac{1}{\Delta_0} \underline{\gamma_2}^{{}^{- \vert \{ k < T_{\varepsilon} \, : \, \rho_k < \myeta{dec} \} \vert}} \overline{\gamma_3}^{{}^{- \vert \{ k < T_{\varepsilon} \, : \, \rho_k \geq \myeta{dec} \} \vert}}  \leq \frac{L+\mykappa{umh}}{\mykappa{lbd}} \varepsilon^{-1}
            \end{equation*}
            then, through logarithm and some algebraic manipulations, we get~\eqref{eqn: Upper bound unsuccessful iter BTR}. The proof is complete.
        \end{proof}
            If we consider Algorithm~\ref{alg:generic BTR} is applied with $\eta > 0$, and $ \myeta{dec} = \myeta{inc} = \eta$, we recover the same worst-case complexity bound $\calO\left( \varepsilon^{-2} \right)$ as the literature. Specifically, 
            \begin{equation}
                T_{\varepsilon} = \vert \mathcal{S}_{T_{\varepsilon} - 1} \vert + \vert \mathcal{U}_{T_{\varepsilon} - 1} \vert \quad \in \calO\left( \varepsilon^{-2} \right)
            \end{equation}
            where the number of successful iterations is bounded by,
            \begin{equation}
                \vert \mathcal{S}_{T_{\varepsilon} - 1} \vert  \leq \dfrac{2(f(x_0) - \mykappa{lbf}) \big(L + \mykappa{umh}\big)}{\eta  \, \mykappa{mdc} \, \mykappa{lbd}} \, \varepsilon^{-2} 
            \end{equation}
            and, with the increase-decrease condition~\ref{cond: increase of the trust-region radius} ($\myeta{inc} \geq \eta$),  the number of unsuccessful iterations can be bounded by,
            \begin{equation}
                \vert \mathcal{U}_{T_{\varepsilon} - 1} \vert  \leq \dfrac{1}{\vert \ln\underline{\gamma_2} \vert} \ln\left(\frac{\Delta_0 (L+\mykappa{umh})}{\mykappa{lbd}} \varepsilon^{-1} \right) + \dfrac{\ln \overline{\gamma_3}}{\vert \ln \underline{\gamma_2} \vert }	\vert \mathcal{S}_{T_{\varepsilon} - 1} \vert 
            \end{equation}
        \begin{remark}
            \label{rem: tight complexity bound}
            The bound $\calO(\varepsilon^{-2})$ is tight.
            \citet{CartGoulToin22} construct a worst-case example showing that no
            first-order algorithm, one that uses only function and gradient
            evaluations, can reach an $\varepsilon$-first-order point in fewer
            iterations.
            \end{remark}

        \begin{remark}
            \label{rem: C3 needed for complexity}
            Weak admissibility does not suffice for the complexity bound. Without Condition~\ref{cond: increase of the trust-region radius}, the radius can grow arbitrarily on successful iterations, and the number of unsuccessful iterations can be unbounded.
        \end{remark}
    \section{Retrospective trust-region under linearly growing Hessians} \label{sec:Retrospective trust-region radius}

            The retrospective trust-region method, introduced by~\citet{BastMalmMoufToinToma10}, refines the radius update by re-evaluating the previous step in light of the current model. The full algorithm is given in Algorithm~\ref{alg:RTR}. 
            At iteration $k$, after constructing the model $m_k$ centered at the current iterate $x_k$ (Step 1.a), the algorithm computes the \emph{retrospective ratio}
            \begin{equation}
                \tilde\rho_k
                \;=\;
                \frac{f(x_{k-1}) - f(x_k)}
                    {m_k(x_{k-1}) - m_k(x_k)}\,,
                \label{eq:retrospective ratio}
            \end{equation}
            in Step 1.b to assess the quality of the previous step taken at iteration \( k-1 \) from the perspective of the new model.
            A large $\tilde\rho_k$ confirms that the previous step is consistent with the updated model and justifies maintaining or enlarging the radius. A small or negative $\tilde\rho_k$ exposes a model mismatch revealed only after the step was taken, and triggers a corrective contraction.
            The radius update at iteration $k$ uses two cases, based on the standard ratio $\rho_{k-1}$.
            If $\rho_{k-1} < \eta_1$, the previous step was rejected at the time and the radius is contracted using a safeguard factor $\theta_{k-1}$.
            If $\rho_{k-1} \geq \eta_1$, the previous step was accepted; the retrospective ratio $\tilde\rho_k$ is then computed and the radius is updated according to its value relative to thresholds $\tilde\eta_1, \tilde\eta_2 \in (0,1)$.

            \citet{BastMalmMoufToinToma10} analysed the retrospective update under uniform boundedness of the model Hessian. We extend their analysis in two directions. 
            First, we prove that the retrospective update is strongly admissible: it satisfies Conditions~\ref{cond: delta_k lower bound}, and~\ref{cond: increase of the trust-region radius} under both the uniform boundedness assumption AM.\ref{as: model hessian} and the linear-growth assumption AM.\ref{as: Linear growth bound model Hessian}.
            Strong admissibility recovers the convergence guarantees of \citet{BastMalmMoufToinToma10} via Theorems~\ref{thm: Liminf of the gradient norm under strong assumption} and~\ref{th: firt-order TR CV}, and yields the worst-case complexity $\calO(\varepsilon^{-2})$ to reach an $\varepsilon$-first-order point via Theorem~\ref{thm: Upper bound successful iter BTR}, a new result.
            Second, the analysis extends to the linear-growth setting: $\liminf \|g_k\| = 0$ holds without uniform boundedness of the model Hessian via Theorem~\ref{thm: Liminf of the gradient norm under weaker assumption}.
            
                \begin{algorithm}[htbp]
                    \small
                    \SetAlgoLined
                    \DontPrintSemicolon

                    \textbf{Step 0: Initialisation.}
                    Choose constants $0 \leq \eta \leq \eta_1 < 1$ (with $\eta_1 \neq 0$), $0<\tilde\eta_1 \le \tilde\eta_2 < 1$, and $0<\gamma_1 \leq \gamma_2 < 1 < \gamma_3$. \; ~\\

                    \textbf{Step 1.a: Model definition.}
                    Define $m_k(s) = f(x_k) +  g_k^\top s  + \tfrac12  s^\top H_k s $. \; ~\\

                    \textbf{\phantom{Step} 1.b: Retrospective radius update.}
                    \noindent If $k=0$, skip to \textbf{Step 2}. \\[6pt]

                    \noindent If $\rho_{k-1} < \eta_1$ then 
                    $
                        \Delta_k =
                        \begin{cases}
                        \displaystyle\min\!\big\{\gamma_2 \|s_{k-1}\|,\, \max[\gamma_1,\theta_{k-1}]\,\Delta_{k-1}\big\}, & \rho_{k-1}<0 \text{ and }  \theta_{k-1} \text{ defined in \eqref{eq:theta_k}},\\[6pt]
                        \gamma_2 \|s_{k-1}\|, & \text{ otherwise}.
                        \end{cases}
                    $

                    \noindent Else, compute the retrospective ratio $\tilde\rho_k$ in \eqref{eq:retrospective ratio} and  $\tilde\theta_k$  computed similarly to \eqref{eq:theta_k}, then set
                    \[
                        \Delta_k =
                        \begin{cases}
                        \max\{\gamma_3 \|s_{k-1}\|,\; \Delta_{k-1}\}, & \tilde\rho_k \ge \tilde\eta_2, \\[6pt]
                        \Delta_{k-1}, & \tilde\rho_k \in [\tilde\eta_1,\tilde\eta_2), \\[6pt]
                        \gamma_2 \|s_{k-1}\|, & \tilde\rho_k \in [0,\tilde\eta_1), \\[6pt]
                        \displaystyle\min\!\big\{\gamma_2\|s_{k-1}\|,\, \max[\gamma_1,\tilde\theta_k]\,\Delta_{k-1}\big\}, & \tilde\rho_k < 0,
                        \end{cases}
                    \]
                    
                    % \[
                    %  \;=\;
                    % \frac{ -(1-\tilde\eta_2)\,\langle \nabla f(x_k),\, s_{k-1}\rangle}
                    %     {(1-\tilde\eta_2)\big[f(x_k)-\langle \nabla f(x_k), s_{k-1}\rangle\big]
                    %         + \tilde\eta_2\, m_k(x_{k-1}) - f(x_{k-1})}\,.
                    % \]
                    \; ~\\
                    \textbf{Step 2: Model minimisation.} $\ldots$ \;

                    \caption{Retrospective trust-region algorithm (RTR).}
                    \label{alg:RTR}
                \end{algorithm}
                % \citet{BastMalmMoufToinToma10} analysed this update rule under uniform boundedness of both the objective and model Hessians. We extend their results to a setting where the model Hessian can satsify  the linear-growth condition AM.\ref{as: Linear growth bound model Hessian}. Our proofs adapt the arguments of \citet{BastMalmMoufToinToma10} to this framework.
                \begin{lemma}
                    Suppose AF.\ref{as: Lipschitz continuous gradient}, AM.\ref{as: model agreement with function} and AM.\ref{as: Linear growth bound model Hessian} hold. Then
                    \[
                        \big| f(x_k) - m_{k-1}(x_k) \big| \;\le\; \tfrac12 \, (L + \norm{H_{k-1}}) \,\Delta_{k-1}^2, \quad \text{and} \quad \big| f(x_{k-1}) - m_k(x_{k-1}) \big| \;\le\; \tfrac12 \, (L + \norm{H_{k}}) \,\Delta_{k-1}^2.
                    \]
                \end{lemma}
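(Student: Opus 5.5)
The plan is a direct Taylor-type estimate. It uses only Assumption AM.\ref{as: model agreement with function}, the Lipschitz bound AF.\ref{as: Lipschitz continuous gradient}, and the trust-region constraint $\|s_{k-1}\|\le\Delta_{k-1}$. AM.\ref{as: Linear growth bound model Hessian} is needed only to ensure that $\|H_{k-1}\|$ and $\|H_k\|$ are finite; no uniform bound enters.

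Throughout, I read $m_j(x)$ as $m_j(x-x_j)$, as in the retrospective ratio~\eqref{eq:retrospective ratio}. I then split according to whether iteration $k-1$ was successful.
\begin{enumerate}[(a)]
\item If $\rho_{k-1}<\eta$, then $x_k=x_{k-1}$. By AM.\ref{as: model agreement with function} we have $m_{k-1}(x_k)=m_{k-1}(0)=f(x_{k-1})=f(x_k)$ and $m_k(x_{k-1})=m_k(0)=f(x_k)=f(x_{k-1})$. Both left-hand sides vanish, so there is nothing to prove.
\item Otherwise $x_k=x_{k-1}+s_{k-1}$ with $\|s_{k-1}\|\le\Delta_{k-1}$, and I estimate each quantity separately.
\end{enumerate}

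For the first inequality, write $s=s_{k-1}$. Using $g_{k-1}=\nabla f(x_{k-1})$ and $m_{k-1}(0)=f(x_{k-1})$,
\[
f(x_k)-m_{k-1}(s)=\bigl[f(x_{k-1}+s)-f(x_{k-1})-\nabla f(x_{k-1})^\top s\bigr]-\tfrac12 s^\top H_{k-1}s .
\]
The bracket is bounded in absolute value by $\tfrac L2\|s\|^2$; this is the standard consequence of AF.\ref{as: Lipschitz continuous gradient}, Lemma 1.2.3 of \citet{Nest18}, already used in the proof of Theorem~\ref{thm: successful iteration for low Delta}. The quadratic term is bounded by $\tfrac12\|H_{k-1}\|\|s\|^2$. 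The triangle inequality together with $\|s\|\le\Delta_{k-1}$ then gives the claim.

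For the second inequality the step runs backwards. The point $x_{k-1}$ corresponds to the displacement $-s$ in the model $m_k$ centred at $x_k$, so $m_k(x_{k-1})=f(x_k)-g_k^\top s+\tfrac12 s^\top H_k s$. With $g_k=\nabla f(x_k)$,
\[
f(x_{k-1})-m_k(x_{k-1})=\bigl[f(x_k-s)-f(x_k)-\nabla f(x_k)^\top(-s)\bigr]-\tfrac12 s^\top H_k s .
\]
The same two bounds apply, now with $\|H_k\|$ in place of $\|H_{k-1}\|$, and the radius remains $\Delta_{k-1}$ because $\|-s\|=\|s_{k-1}\|\le\Delta_{k-1}$.

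There is no genuine obstacle. The only points requiring care are the bookkeeping of the model arguments (absolute points versus displacements) and the observation that the second estimate is controlled by the \emph{previous} radius $\Delta_{k-1}$, not by $\Delta_k$.
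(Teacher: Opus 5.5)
Your proof is correct and follows the route the paper intends. The paper's proof is the single line that the bounds follow directly from the Lipschitz-gradient assumption and the Hessian bound: the quadratic remainder estimate $\tfrac{L}{2}\|s_{k-1}\|^2$ plus $\tfrac12\|H\|\,\|s_{k-1}\|^2$, with $\|s_{k-1}\|\le\Delta_{k-1}$. You have written out those details, including the backward displacement $-s_{k-1}$ for $m_k(x_{k-1})$ and the trivial rejected-step case.
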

                \begin{proof}
                    This follows directly from AF.\ref{as: Lipschitz continuous gradient} and AM.\ref{as: Linear growth bound model Hessian}.
                \end{proof}

                \begin{lemma}
                    Assuming AF.\ref{as: Lipschitz continuous gradient} and AM.\ref{as: Linear growth bound model Hessian}, for all \( k \geq 1 \), we have 
                    \begin{equation}
                        \abs{ \left( m_{k-1}(x_{k-1}) -  m_{k-1}(x_{k}) \right) - \left(  m_{k}(x_{k-1}) -  m_{k}(x_{k})  \right) } \, \leq \, \tfrac12 (2L +\norm{H_{k-1}} + \norm{H_{k}})   \Delta_{k-1}^2.
                    \label{eqn: difference in model reduction retrospective}
                    \end{equation}
                \end{lemma}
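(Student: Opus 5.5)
The plan is to reduce the estimate to the previous lemma with a triangle inequality. Here $m_k(x)$ is shorthand for the model evaluated at the step $x - x_k$, so $m_k(x_k) = f(x_k)$, and likewise $m_{k-1}(x_{k-1}) = f(x_{k-1})$, by AM.\ref{as: model agreement with function}.

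First, dispose of the case where iteration $k-1$ was unsuccessful. Then $x_k = x_{k-1}$, both model reductions vanish, and the left-hand side of \eqref{eqn: difference in model reduction retrospective} is zero. So assume $x_k = x_{k-1} + s_{k-1}$ with $\|s_{k-1}\| \leq \Delta_{k-1}$.

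Next, insert the true function values and regroup:
\[
\big(m_{k-1}(x_{k-1}) - m_{k-1}(x_k)\big) - \big(m_k(x_{k-1}) - m_k(x_k)\big)
= \big(f(x_k) - m_{k-1}(x_k)\big) + \big(f(x_{k-1}) - m_k(x_{k-1})\big).
\]
This follows by substituting $m_{k-1}(x_{k-1}) = f(x_{k-1})$ and $m_k(x_k) = f(x_k)$, then adding and subtracting $f(x_k)$ and $f(x_{k-1})$.

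Finally, apply the triangle inequality and the two bounds of the preceding lemma. These give $\tfrac12(L+\|H_{k-1}\|)\Delta_{k-1}^2$ and $\tfrac12(L+\|H_k\|)\Delta_{k-1}^2$, and their sum is exactly the right-hand side $\tfrac12(2L+\|H_{k-1}\|+\|H_k\|)\Delta_{k-1}^2$.

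The only point needing care is the second bound of the preceding lemma, in case one wants to re-derive it. The model $m_k$ is centred at $x_k$ and evaluated at the step $-s_{k-1}$. Taylor's theorem with the Lipschitz gradient bound from AF.\ref{as: Lipschitz continuous gradient} (Lemma 1.2.3 of \citet{Nest18}) gives $|f(x_{k-1}) - f(x_k) + g_k^\top s_{k-1}| \leq \tfrac{L}{2}\|s_{k-1}\|^2$. The quadratic term contributes at most $\tfrac12\|H_k\|\|s_{k-1}\|^2$. The step norm is at most $\Delta_{k-1}$, not $\Delta_k$, which is why $\Delta_{k-1}$ appears in both terms. No real obstacle is expected: the whole argument is the regrouping identity plus the triangle inequality.
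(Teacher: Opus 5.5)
Your proof is correct, but it takes a different route from the paper's. The paper never passes through function values. It writes out both model reductions explicitly,
$m_{k-1}(x_{k-1})-m_{k-1}(x_k) = -g_{k-1}^\top s_{k-1} - \tfrac12 s_{k-1}^\top H_{k-1}s_{k-1}$ and $m_k(x_{k-1})-m_k(x_k) = -g_k^\top s_{k-1} + \tfrac12 s_{k-1}^\top H_k s_{k-1}$.
Subtracting gives $-s_{k-1}^\top(g_{k-1}-g_k) - \tfrac12 s_{k-1}^\top(H_{k-1}+H_k)s_{k-1}$. The first term is bounded by Cauchy--Schwarz and the Lipschitz gradient, giving $L\|s_{k-1}\|^2$, and the second by operator norms.

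Your add-and-subtract identity lands on exactly this expression once expanded, since $f(x_{k-1})$ and $f(x_k)$ cancel. Your two Taylor remainders of $\tfrac{L}{2}\|s_{k-1}\|^2$ sum to the paper's $L\|s_{k-1}\|^2$, so the constants coincide.

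What your version buys is modularity and interpretation. It shows that the gap between the forward and retrospective model reductions is exactly the sum of each model's error at the other endpoint. The lemma then becomes a corollary of the preceding one, and any sharper or inexact-model bound on $|f-m|$ carries over directly.

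What the paper's version buys is economy and self-containment. Because the constant terms cancel, it uses only gradient agreement $g_k=\nabla f(x_k)$ and never value agreement $m_k(0)=f(x_k)$. Your identity needs both, which is harmless here because the model is defined with $f(x_k)$ as its constant term.

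Your explicit treatment of the unsuccessful case, where both reductions vanish, is also a small improvement. The paper's computation tacitly assumes $x_k=x_{k-1}+s_{k-1}$.
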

                \begin{proof}
                    First, note that 
                    $m_k(x_{k-1}) - m_k(x_k) =  -  s_{k-1}^\top  g_k
                        + \tfrac{1}{2} s_{k-1}^\top \, H_k \, s_{k-1}$  and $m_{k-1}(x_{k-1}) - m_{k-1}(x_k)  = - s_{k-1}^\top g_{k-1} - \tfrac12  s_{k-1}^\top H_{k-1} s_{k-1}$. 
                    Therefore, by Cauchy-Schwarz inequality, AF.\ref{as: Lipschitz continuous gradient} and AM.\ref{as: Linear growth bound model Hessian}, we have
                    \begin{align*}
                        \abs{ [m_{k-1}(x_{k-1}) - m_{k-1}(x_k)] - [m_k(x_{k-1}) - m_k(x_k)] } & = \abs{- s_{k-1}^\top \, (g_{k-1} - g_k) - \tfrac12 s_{k-1}^\top {(H_{k-1} + H_k)} s_{k-1}} \\
                        & \leq \| s_{k-1} \| \, \| g_{k-1} - g_k \| + \tfrac12 \| s_{k-1} \|^2 \, {\| H_{k-1} + H_k \|} \\
                        & \leq L \| s_{k-1} \|^2 + \tfrac12 (\norm{H_{k-1}} + \norm{H_{k}})  \| s_{k-1} \|^2  \\
                        & \leq  \tfrac12 (2L +\norm{H_{k-1}} + \norm{H_{k}})   \Delta_{k-1}^2.
                    \end{align*}
                \end{proof} 

                \begin{theorem}
                    Suppose AM.\ref{as: model agreement with function}, AF.\ref{as: Lipschitz continuous gradient} and AM.\ref{as: Linear growth bound model Hessian} hold. Let \( k \geq 1 \), we have, for all \( \eta \in [0, 1) \),
                    \[ \text{if } \quad 
                    \Delta_{k-1} \;\le\; \frac{\mykappa{mdc}(1-\eta)}{L + \norm{H_{k-1}}} \|g_{k-1}\|
                    \quad \text{then} \quad
                    \rho_{k-1} \ge \eta,\]
                    and, for all \( \tilde\eta \in [0, 1) \),
                    \begin{equation}
                        \text{if } \quad 
                        \Delta_{k-1} \;\le\; \frac{\mykappa{mdc}(1-\tilde\eta)}{(3-2\tilde\eta)L + (2-\tilde\eta)\norm{H_k} + (1-\tilde\eta)\norm{H_{k-1}}} \|g_{k-1}\|
                        \quad \text{then} \quad
                        \tilde{\rho}_k \ge \tilde\eta.
                        \label{eqn: low Delta retrospective implies high retrospective ratio}
                    \end{equation}
                \end{theorem}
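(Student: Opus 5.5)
The first implication is exactly Theorem~\ref{thm: successful iteration for low Delta} applied at iteration $k-1$, since only $\norm{H_{k-1}}$ enters through the Cauchy decrease of $m_{k-1}$. I will therefore cite that result directly and spend the work on the retrospective statement~\eqref{eqn: low Delta retrospective implies high retrospective ratio}.

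For the retrospective ratio, the plan is to compare $\tilde\rho_k$ with the standard ratio $\rho_{k-1}$ through the two preceding lemmas. Write $\mathrm{ared} = f(x_{k-1}) - f(x_k)$, $\mathrm{pred}_{k-1} = m_{k-1}(x_{k-1}) - m_{k-1}(x_k)$ and $\widetilde{\mathrm{pred}}_k = m_k(x_{k-1}) - m_k(x_k)$, so that $\tilde\rho_k = \mathrm{ared}/\widetilde{\mathrm{pred}}_k$. By AM.\ref{as: model agreement with function} we have $m_{k-1}(x_{k-1}) = f(x_{k-1})$, so $\mathrm{ared} - \mathrm{pred}_{k-1} = m_{k-1}(x_k) - f(x_k)$. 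The first lemma bounds this by $\tfrac12(L+\norm{H_{k-1}})\Delta_{k-1}^2$. The second lemma gives
\[
\abs{\mathrm{pred}_{k-1} - \widetilde{\mathrm{pred}}_k} \le \tfrac12(2L + \norm{H_{k-1}} + \norm{H_k})\Delta_{k-1}^2 .
\]
Combining, write $\mathrm{ared} = \mathrm{pred}_{k-1} + e_1$ and $\widetilde{\mathrm{pred}}_k = \mathrm{pred}_{k-1} + e_2$ with explicit bounds on $\abs{e_1}$ and $\abs{e_2}$. Note that this is only meaningful for accepted steps, where $x_k = x_{k-1}+s_{k-1}$. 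If the step was rejected then $x_k = x_{k-1}$ and $\tilde\rho_k$ is undefined; the statement is then understood for $\rho_{k-1}\ge\eta_1$, which matches Algorithm~\ref{alg:RTR}.

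Next I use AA.\ref{as: Cauchy decrease} together with the smallness of $\Delta_{k-1}$. Since $\mykappa{mdc}(1-\tilde\eta) < 1$ and the denominator exceeds $\norm{H_{k-1}}$, the hypothesis gives $\Delta_{k-1} \le \norm{g_{k-1}}/\norm{H_{k-1}}$. Hence $\mathrm{pred}_{k-1} \ge \tfrac{\mykappa{mdc}}{2}\norm{g_{k-1}}\Delta_{k-1} =: P > 0$. (I would double-check whether the factor $\tfrac12$ causes a mismatch with the stated constant, which has no $\tfrac12$. If so, the crude bounds leave slack and the claim still follows, perhaps more loosely.)

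The condition $\tilde\rho_k \ge \tilde\eta$ is then implied by $\widetilde{\mathrm{pred}}_k > 0$ together with
\[
\mathrm{pred}_{k-1} + e_1 \ge \tilde\eta(\mathrm{pred}_{k-1} + e_2),
\]
that is, $(1-\tilde\eta)\,\mathrm{pred}_{k-1} \ge \abs{e_1} + \tilde\eta\abs{e_2}$. Bounding the right side by
\[
\tfrac12\bigl[(1+2\tilde\eta)L + (1+\tilde\eta)\norm{H_{k-1}} + \tilde\eta\norm{H_k}\bigr]\Delta_{k-1}^2
\]
and the left side below by $(1-\tilde\eta)P$ reduces the claim to a linear inequality $\Delta_{k-1} \le c\,\norm{g_{k-1}}$. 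The remaining step is to check that the stated denominator $(3-2\tilde\eta)L + (2-\tilde\eta)\norm{H_k} + (1-\tilde\eta)\norm{H_{k-1}}$ dominates what this computation needs.

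The main obstacle is this coefficient bookkeeping. The stated coefficients, with $(2-\tilde\eta)$ on $\norm{H_k}$, suggest the authors used a slightly different decomposition. Their route is likely to write $\tilde\rho_k - 1 = (\mathrm{ared} - \widetilde{\mathrm{pred}}_k)/\widetilde{\mathrm{pred}}_k$, bound the numerator by $\abs{e_1}+\abs{e_2}$, and lower bound $\widetilde{\mathrm{pred}}_k \ge P - \abs{e_2}$. Solving
\[
\abs{e_1} + \abs{e_2} \le (1-\tilde\eta)(P - \abs{e_2})
\]
produces exactly combinations like $(2-\tilde\eta)\abs{e_2} + \abs{e_1}$, which matches the shape of the denominator. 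I would follow that route and verify that each coefficient is covered. Positivity of $\widetilde{\mathrm{pred}}_k$ falls out of the same inequality.
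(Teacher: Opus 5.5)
\textbf{There is a genuine gap: the coefficient check you defer does not close for either route you sketch.} Your first part and your setup are fine. The first part follows from Theorem~\ref{thm: successful iteration for low Delta}; you restrict to accepted steps; and $\Delta_{k-1}\le\norm{g_{k-1}}/\norm{H_{k-1}}$ gives $\mathrm{pred}_{k-1}\ge P$. The trouble is with the two routes for the retrospective part.
\begin{itemize}
\item Your first route needs $\Delta_{k-1}\bigl[(1+2\tilde\eta)L+(1+\tilde\eta)\norm{H_{k-1}}+\tilde\eta\norm{H_k}\bigr]\le(1-\tilde\eta)\mykappa{mdc}\norm{g_{k-1}}$. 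Its $\norm{H_{k-1}}$ coefficient $1+\tilde\eta$ exceeds the stated $1-\tilde\eta$ for every $\tilde\eta>0$. Its $L$ coefficient also exceeds $3-2\tilde\eta$ once $\tilde\eta>\tfrac12$.
\item Your second route bounds the numerator $\mathrm{ared}-\widetilde{\mathrm{pred}}_k=e_1-e_2$ by $\abs{e_1}+\abs{e_2}$. It therefore needs $\abs{e_1}+(2-\tilde\eta)\abs{e_2}\le(1-\tilde\eta)P$, which gives the denominator $(5-2\tilde\eta)L+(3-\tilde\eta)\norm{H_{k-1}}+(2-\tilde\eta)\norm{H_k}$. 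This exceeds the stated one by $2L+2\norm{H_{k-1}}$; only the $\norm{H_k}$ coefficient matches.
\end{itemize}
Since $\norm{H_{k-1}}$ may be arbitrarily large compared with $L$ and $\norm{H_k}$, the stated hypothesis implies neither of your sufficient conditions. The factors $\tfrac12$ give no slack either, because they cancel exactly between the Cauchy decrease and the Taylor-type error bounds.

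\textbf{The missing idea is an exact cancellation in the numerator.} By AM.1 at iteration $k$, $m_k(x_k)=f(x_k)$, so
\[
\tilde\rho_k-1=\frac{f(x_{k-1})-m_k(x_{k-1})}{m_k(x_{k-1})-m_k(x_k)}.
\]
The numerator is the error of the \emph{new} model at the \emph{old} point. The second inequality of the first lemma bounds it by $\tfrac12(L+\norm{H_k})\Delta_{k-1}^2$, with a single $L$ and no $\norm{H_{k-1}}$. Your $e_1$ (old model at the new point) never needs to appear. The term $e_2$ is used only in the denominator, through $\abs{\widetilde{\mathrm{pred}}_k}\ge P-\abs{e_2}$. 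This gives
\[
\abs{1-\tilde\rho_k}\le\frac{(L+\norm{H_k})\,\Delta_{k-1}}{\mykappa{mdc}\norm{g_{k-1}}-(2L+\norm{H_{k-1}}+\norm{H_k})\,\Delta_{k-1}}.
\]
Requiring the right-hand side to be at most $1-\tilde\eta$ is exactly the hypothesis, because
\[
(L+\norm{H_k})+(1-\tilde\eta)(2L+\norm{H_{k-1}}+\norm{H_k})=(3-2\tilde\eta)L+(2-\tilde\eta)\norm{H_k}+(1-\tilde\eta)\norm{H_{k-1}}.
\]
The same hypothesis makes the denominator positive, leaving slack $\frac{L+\norm{H_k}}{1-\tilde\eta}\Delta_{k-1}>0$. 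This is the paper's argument.
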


                \begin{proof}
                The first part is a direct consequence of Theorem~\ref{thm: successful iteration for low Delta}.
                For the second part, we have 
                \begin{equation*}
                    \tilde{\rho}_k - 1  = \frac{f(x_{k-1}) - f(x_k)}{m_k(x_{k-1}) - m_k(x_k)} - 1 = \frac{f(x_{k-1}) - f(x_k) - (m_k(x_{k-1}) - m_k(x_k))}{m_k(x_{k-1}) - m_k(x_k)} = \frac{f(x_{k-1}) - m_k(x_{k-1}) }{m_k(x_{k-1}) - m_k(x_k)} 
                \end{equation*}
                By the previous lemmas, we have
                $
                    \abs{[m_k(x_{k-1}) - m_k(x_k)]} \geq \abs{[m_{k-1}(x_{k-1}) - m_{k-1}(x_k)]} - \tfrac12 (2L +\norm{H_{k-1}} + \norm{H_{k}})   \Delta_{k-1}^2.
                $
                Then, by the Cauchy decrease, and  $\Delta_{k-1} \leq \frac{\mykappa{mdc}(1-\tilde\eta)}{(1-\tilde\eta)\norm{H_{k-1}}} \|g_{k-1}\| \leq \frac{\|g_{k-1}\|}{\|H_{k-1}\|} $, we have
                % \[
                %     \abs{[m_{k-1}(x_{k-1}) - m_{k-1}(x_k)]} \geq \dfrac{\mykappa{mdc}}{2} \, \| g_{k-1} \| \, \min\left\{ \Delta_{k-1}, \frac{\| g_{k-1} \|}{\| H_{k-1} \|} \right\} \geq  \dfrac{\mykappa{mdc}}{2} \, \| g_{k-1} \| \, \Delta_{k-1}.
                % \]
                % Therefore, we have
                \begin{align*}
                    \abs{1 - \tilde{\rho}_k} & \leq \frac{ \tfrac12(L + \norm{H_{k}}) \Delta_{k-1}^2}{\dfrac{\mykappa{mdc}}{2} \, \| g_{k-1} \| \, \Delta_{k-1} - \tfrac12 (2L + \norm{H_{k-1}} + \norm{H_{k}})   \Delta_{k-1}^2} \leq  \frac{(L +  \norm{H_{k}}) \Delta_{k-1}}{\mykappa{mdc} \, \| g_{k-1} \| -  (2L + \norm{H_{k-1}} + \norm{H_{k}})   \Delta_{k-1}} \\
                \end{align*}
                And, from \eqref{eqn: low Delta retrospective implies high retrospective ratio} and the last inequality, we get $\abs{1 - \tilde{\rho}_k} \leq 1 -\tilde\eta $ which implies  \( \tilde{\rho}_k \geq \tilde\eta \).
                \end{proof}
                \begin{corollary}
                    By considering $M_k = L + \max_{0 \leq i \leq k} \norm{H_i}$, as defined in \eqref{eqn: M_k definition}, we have for all \( k \geq 1 \),
                    \begin{equation}
                        \text{if } \quad \Delta_{k-1} \;\le\; \frac{\mykappa{mdc}(1-\tilde\eta)}{(3-2 \, \tilde\eta) M_k} \|g_{k-1}\| \quad 
                        \text{then}
                        \quad
                        \tilde{\rho}_k \ge \tilde\eta.
                        \label{eqn: low Delta retrospective implies high retrospective ratio M_k}
                    \end{equation}
                \end{corollary}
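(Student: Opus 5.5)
The corollary follows from the preceding theorem, specifically from its second implication \eqref{eqn: low Delta retrospective implies high retrospective ratio}. The plan is to show that the denominator there is dominated by $(3-2\tilde\eta)M_k$. Then the new hypothesis, with the larger denominator, implies the old one, and the conclusion $\tilde\rho_k \ge \tilde\eta$ follows at once.

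First, I record that with $M_k = L + \max_{0\le i\le k}\norm{H_i}$ we have $\norm{H_k} \le M_k - L$ and $\norm{H_{k-1}} \le M_k - L$. The second bound uses the fact that the max over $0\le i\le k$ includes the index $k-1$. This is where $k\ge 1$ is needed, so that $H_{k-1}$ is defined and covered by the maximum.

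Next, for $\tilde\eta\in[0,1)$ all coefficients $(3-2\tilde\eta)$, $(2-\tilde\eta)$ and $(1-\tilde\eta)$ are nonnegative. Hence
\[
(3-2\tilde\eta)L + (2-\tilde\eta)\norm{H_k} + (1-\tilde\eta)\norm{H_{k-1}}
\le (3-2\tilde\eta)L + \big[(2-\tilde\eta)+(1-\tilde\eta)\big](M_k - L) = (3-2\tilde\eta)M_k ,
\]
since $(2-\tilde\eta)+(1-\tilde\eta) = 3-2\tilde\eta$. The $L$ terms telescope exactly, and the constant $(3-2\tilde\eta)$ appearing in the statement confirms this is the intended bound.

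Finally, if $\Delta_{k-1} \le \frac{\mykappa{mdc}(1-\tilde\eta)}{(3-2\tilde\eta)M_k}\|g_{k-1}\|$, then by the inequality above it also satisfies the hypothesis of \eqref{eqn: low Delta retrospective implies high retrospective ratio}. The theorem then yields $\tilde\rho_k \ge \tilde\eta$.

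There is no genuine obstacle. The only points needing care are the nonnegativity of the coefficients, so that the bounds on $\norm{H_k}$ and $\norm{H_{k-1}}$ may be substituted, and checking that $M_k$, not $M_{k-1}$, controls both Hessians.
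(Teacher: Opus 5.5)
Your proof is correct and takes the same route as the paper, which dismisses the corollary as a direct consequence of the preceding theorem and the definition of $M_k$. Your bound $(3-2\tilde\eta)L + (2-\tilde\eta)\norm{H_k} + (1-\tilde\eta)\norm{H_{k-1}} \le (3-2\tilde\eta)M_k$, obtained from $\norm{H_k},\norm{H_{k-1}} \le M_k - L$ and the nonnegativity of the coefficients, is exactly the computation that the paper's one-line argument leaves implicit.
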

                \begin{proof}
                    This is a direct consequence of the previous theorem and the definition of \( M_k \) in \eqref{eqn: M_k definition}.
                \end{proof}
                The retrospective update satisfies Condition~\ref{cond: delta_k lower bound} as follows.
                \begin{theorem}
                    For all \( k \geq 0 \), the trust-region radius \( \Delta_k \) satisfies Condition~\ref{cond: delta_k lower bound}, with 
                    \begin{equation}
                        \mykappa{lbd} \;=\; \min\!\bigg\{1,\;
                                \frac{\Delta_0 M_0}{\|g_0\|},\;
                                \gamma_1(1-\eta)\mykappa{mdc},\;
                                \gamma_2(1-\xi),\;
                                \frac{\gamma_1 (1-\tilde\eta_2) \mykappa{mdc}}{3-2\tilde\eta_2}
                            \bigg\}.
                    \end{equation}
                \end{theorem}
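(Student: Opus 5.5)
The proof is an induction on $k$ for the statement
\[
\Delta_k \;\geq\; \frac{\mykappa{lbd}}{M_k}\,\mu_k, \qquad \mu_k \overset{def}{=} \min_{0\le i\le k}\|g_i\|,
\]
with $M_k$ as in \eqref{eqn: M_k definition}.

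\textbf{Base case.} It is immediate, since $\mykappa{lbd} \le \Delta_0 M_0/\|g_0\|$ gives $\Delta_0 \ge \mykappa{lbd}\|g_0\|/M_0$.

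\textbf{Inductive step.} Suppose the bound holds at $k-1$. Since $M_{k-1}\le M_k$ and $\mu_k\le\mu_{k-1}$, the hypothesis gives
\[
\Delta_{k-1} \;\ge\; \mykappa{lbd}\,\mu_k/M_k .
\]
I then go through the branches of Step~1.b of Algorithm~\ref{alg:RTR} one at a time. Throughout I use $\|g_{k-1}\|\ge\mu_k$ and $L+\|H_{k-1}\|\le M_{k-1}\le M_k$.

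\textbf{Case 1: non-contracting branches.} These are $\tilde\rho_k\ge\tilde\eta_2$ and $\tilde\rho_k\in[\tilde\eta_1,\tilde\eta_2)$. In both, $\Delta_k\ge\Delta_{k-1}$, so the inductive hypothesis already gives the bound.

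\textbf{Case 2: the pure $\gamma_2\|s_{k-1}\|$ branches.} These are $\rho_{k-1}\in[0,\eta_1)$ and $\tilde\rho_k\in[0,\tilde\eta_1)$. Here I do not use the inductive hypothesis. Instead I bound $\|s_{k-1}\|$ from below by contraposition.
\begin{itemize}
\item If $\rho_{k-1}<\eta_1$, Corollary~\ref{cor: delta_k lower bound} with threshold $\eta_1$ gives $\|s_{k-1}\|>\mykappa{mdc}(1-\eta_1)\|g_{k-1}\|/M_k$.
\item If $\tilde\rho_k<\tilde\eta_1$, I need a version of the retrospective theorem with $\|s_{k-1}\|$ in place of $\Delta_{k-1}$. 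I would obtain it by rerunning that proof verbatim: both preceding lemmas are really bounds in $\|s_{k-1}\|^2$, and the Cauchy decrease only needs $\|s_{k-1}\|\le\|g_{k-1}\|/\|H_{k-1}\|$. Contraposition of this $\|s\|$-version of the Corollary then gives $\|s_{k-1}\|>\mykappa{mdc}(1-\tilde\eta_1)\|g_{k-1}\|/((3-2\tilde\eta_1)M_k)$.
\end{itemize}
Multiplying either bound by $\gamma_2$ yields a constant of the form $\gamma_2(1-\xi)$, where $\xi$ absorbs $\eta_1$, $\tilde\eta_1$, the factor $\mykappa{mdc}\le1$ and $3-2\tilde\eta_1\ge1$. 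This is how I read the constant $\gamma_2(1-\xi)$ in the statement.

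\textbf{Case 3: the safeguarded branches.} These are $\rho_{k-1}<0$ and $\tilde\rho_k<0$, where
\[
\Delta_k=\min\{\gamma_2\|s_{k-1}\|,\max[\gamma_1,\theta]\Delta_{k-1}\}.
\]
The first term is handled exactly as in Case~2. For the second, $\max[\gamma_1,\theta]\Delta_{k-1}\ge\gamma_1\Delta_{k-1}$. I will not use the inductive hypothesis here, because compounding the factor $\gamma_1$ over consecutive contractions would destroy the constant. Instead I use again that a negative ratio forces a large step.
\begin{itemize}
\item $\rho_{k-1}<0\le\eta$ gives, by Corollary~\ref{cor: delta_k lower bound}, $\Delta_{k-1}\ge\|s_{k-1}\|>\mykappa{mdc}(1-\eta)\|g_{k-1}\|/M_k$. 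This produces the constant $\gamma_1(1-\eta)\mykappa{mdc}$.
\item $\tilde\rho_k<0\le\tilde\eta_2$ gives, by contraposition of \eqref{eqn: low Delta retrospective implies high retrospective ratio M_k} with $\tilde\eta=\tilde\eta_2$, $\Delta_{k-1}>\mykappa{mdc}(1-\tilde\eta_2)\|g_{k-1}\|/((3-2\tilde\eta_2)M_k)$. This produces the constant $\gamma_1(1-\tilde\eta_2)\mykappa{mdc}/(3-2\tilde\eta_2)$.
\end{itemize}

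\textbf{Conclusion.} Taking $\mykappa{lbd}$ to be the minimum of all these constants (and of $1$) closes the induction in every branch.

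\textbf{Expected obstacles.} The main one is Case~2 for the retrospective ratio. The paper's retrospective theorem is phrased in terms of $\Delta_{k-1}$, while the update uses $\gamma_2\|s_{k-1}\|$, so the $\|s_{k-1}\|$-version must be rechecked. A secondary issue is identifying the constant $\xi$ consistently across the two $\gamma_2\|s_{k-1}\|$ branches, taking the worse of the two thresholds.
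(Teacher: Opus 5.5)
Your induction is correct and follows the paper's skeleton: the base case comes from the $\Delta_0M_0/\|g_0\|$ term, the non-contracting branches from the inductive hypothesis, and the $\gamma_1\Delta_{k-1}$ terms from contraposition of Corollary~\ref{cor: delta_k lower bound} and of \eqref{eqn: low Delta retrospective implies high retrospective ratio M_k} with $\tilde\eta=\tilde\eta_2$. You depart from the paper in the $\gamma_2\|s_{k-1}\|$ terms, and $\xi$ changes meaning as a result. In the paper, $\xi$ is the uniform bound on the forcing sequence of Condition~\ref{cond: inactive trust-region condition subproblem}. The term $\gamma_2(1-\xi)$ comes from Proposition~\ref{prop: lower bound on s_k} (via Remark~\ref{rem: lower bound s_k}) under AA.\ref{as: solver of the TR subproblem}, namely $\|s_k\|\ge\min\{\Delta_k,(1-\xi)\|g_k\|/\|H_k\|\}$; the active case $\|s_k\|=\Delta_k$ then has to fall back on the small-ratio bound on $\Delta_k$. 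You instead apply the small-ratio contraposition to $\|s_{k-1}\|$ itself. You use the $\|s\|$ form of Corollary~\ref{cor: delta_k lower bound} when $\rho_{k-1}<\eta_1$, and an $\|s\|$ form of the retrospective theorem when $\tilde\rho_k<\tilde\eta_1$, which does go through exactly as you describe. This works because, in the retrospective rule, every assignment depending on $\|s_{k-1}\|$ either follows a small $\rho_{k-1}$ or $\tilde\rho_k$, or is a $\max$ with $\Delta_{k-1}$. The solver bound is genuinely needed only when $\|s_k\|$ drives the radius after a good ratio, as in the successful branch of Proposition~\ref{prop: lower bound Delta_k step-size dependent}. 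Your route therefore needs no subproblem-solver assumption; the paper removes $\xi$ only by changing the update rule after the theorem. It yields $\min\{1,\Delta_0M_0/\|g_0\|,\gamma_1(1-\eta)\mykappa{mdc},\gamma_2(1-\eta_1)\mykappa{mdc},\gamma_1(1-\tilde\eta_2)\mykappa{mdc}/(3-2\tilde\eta_2)\}$; your $\tilde\eta_1$ term is redundant. This is never smaller than what the paper's proof actually delivers, namely $\min\{1,\Delta_0M_0/\|g_0\|,\gamma_1(1-\eta_1)\mykappa{mdc},\gamma_2(1-\xi),\gamma_1(1-\tilde\eta_2)\mykappa{mdc}/(3-2\tilde\eta_2)\}$.

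State your constant explicitly instead of reading $\xi$ as a catch-all, since it is not the paper's $\xi$. That said, some $(1-\eta_1)$ factor is unavoidable: the displayed formula, read with the forcing-sequence $\xi$, is not what the paper proves and can fail. Take $f(x)=\tfrac{L}{2}x^2+gx$, $x_0=0$, $g>0$, $H_0=H_1=0$, $\eta=0$, $\eta_1$ close to $1$, and $\Delta_0$ slightly above $2(1-\eta_1)g/L$. Then $\rho_0\in(0,\eta_1)$, $\mykappa{lbd}=L\Delta_0/g$, and $\Delta_1=\gamma_2\Delta_0<\mykappa{lbd}\min\{\|g_0\|,\|g_1\|\}/M_1$. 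Your reinterpretation of $\xi$, which absorbs $\eta_1$, is precisely what repairs this.
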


                \begin{proof}
                    We prove the result by induction on \( k \). \emph{Base case.} The inequality holds by the choice of \(\mykappa{lbd}\).
                    
                    \noindent\emph{Induction step.} Assume the bound
                    \(\Delta_k \ge \dfrac{\mykappa{lbd}}{M_k}\min_{0\le i\le k}\|g_i\|\). 
                    We will show it holds for \(k+1\).
                    \begin{enumerate}[(i)]
                        \item If $\rho_k < \eta_1$, by the update rule we set 
                            $
                                \Delta_{k+1} \ge \min \{\gamma_2\|s_{k}\|, \max[\gamma_1,\theta_{k}]\Delta_{k}\} \ge \min \{\gamma_2\|s_{k}\|, \gamma_1 \, \Delta_{k}\}.
                            $
                            Because, $\rho_k < \eta_1$, we have $\Delta_k \geq \frac{(1 - \eta_1) \, \mykappa{mdc}}{M_k} \| g_k \|$. 
                            % Thus, 
                            % $
                            %     \Delta_{k+1} \ge \gamma_1 \, \Delta_k \ge \frac{\gamma_1 (1-\eta_1) \mykappa{mdc}}{M_{k+1}}\min_{0\le i\le k+1}\|g_i\|.
                            % $
                            By Remark~\ref{rem: lower bound s_k}, we have \(\|s_{k}\|\ge  \frac{(1-\xi)}{M_k} \norm{g_k}\).
                            % , and 
                            % $
                            %     \Delta_{k+1} \ge \gamma_2 \|s_{k}\| \ge \frac{\gamma_2(1-\xi)}{M_{k+1}}\min_{0\le i\le k+1}\|g_i\|.
                            % $
                            Therefore, 
                            \[
                                \Delta_{k+1} \ge \frac{\min\{\gamma_2(1-\xi) \, , \, \gamma_1(1-\eta_1)\mykappa{mdc} \}}{M_{k+1}}\min_{0\le i\le k+1}\|g_i\|.
                            \]
                        \item Else, $\rho_k \geq \eta_1$. \\
                            First, consider the case $\tilde\rho_k < \tilde\eta_2$.
                            By contraposition of \eqref{eqn: low Delta retrospective implies high retrospective ratio M_k}, this implies
                            \(\Delta_{k} > \dfrac{\mykappa{mdc}(1-\tilde\eta_2)}{(3-2\tilde\eta_2)M_{k+1}}\|g_{k}\|\).
                            % Then \(\Delta_{k+1} = \min \{\gamma_2\|s_k\| \, , \, \max[\gamma_1,\tilde\theta_k]\Delta_k\}\).
                            By a similar argument as in case (i), we have
                            \[
                                \Delta_{k+1} \ge \min \left\{\dfrac{\gamma_2(1-\xi)}{M_k} \norm{g_k}, \dfrac{\gamma_1(1-\tilde\eta_2)\mykappa{mdc}}{(3 - 2 \tilde\eta_2) M_{k+1}} \norm{g_k}\right\} \ge \frac{\min\{\gamma_2 \, (1-\xi) \, , \, \frac{\gamma_1 \, (1-\tilde\eta_2)\mykappa{mdc}}{3-2\tilde\eta_2}\}}{M_{k+1}}\min_{0\le i\le k+1}\|g_i\|.
                            \]
                            If instead \(\tilde\rho_k\ge\tilde\eta_2\), the retrospective rule sets \(\Delta_{k+1} \geq  \Delta_k \geq  \dfrac{\mykappa{lbd}}{M_k}\min_{0\le i\le k}\|g_i\|\ge \dfrac{\mykappa{lbd}}{M_{k+1}}\min_{0\le i\le k+1}\|g_i\|\).
                    \end{enumerate}
                    Thus in every case \(\Delta_{k+1}\) satisfies the required lower bound, completing the induction and the proof.
                \end{proof}

                To avoid the dependence of the lower bound on the constant $\xi$ from the subproblem solver, we can consider the following update rule in Step 1.b. of Algorithm~\ref{alg:RTR}:
                \begin{enumerate}[ ]
                    \item If $\rho_{k-1} < \eta_1$, set $\Delta_k \in [\gamma_1 \, \Delta_{k-1}, \, \gamma_2 \Delta_{k-1} ) $.
                    \item Else, compute the retrospective ratio $\tilde\rho_k$ in \eqref{eq:retrospective ratio} and set
                    \[
                        \Delta_k =
                        \begin{cases}
                        ( \Delta_{k-1}, \, \gamma_3 \, \Delta_{k-1}], & \tilde\rho_k \ge \tilde\eta_2, \\[6pt]
                        [\gamma_2 \, \Delta_{k-1}, \, \Delta_{k-1}], & \tilde\rho_k \in [\tilde\eta_1,\tilde\eta_2), \\[6pt]
                        [\gamma_1 \, \Delta_{k-1}, \, \gamma_2 \Delta_{k-1} ), & \tilde\rho_k < \tilde\eta_1.
                        \end{cases}
                    \]
                \end{enumerate}
                Considering this update rule, we can show that the trust-region radius \( \Delta_k \) satisfies Condition~\ref{cond: delta_k lower bound} with \[ \mykappa{lbd} = \min\!\bigg\{1,\;
                                \frac{\Delta_0 M_0}{\|g_0\|},\;
                                \gamma_1 (1-\eta)\mykappa{mdc},\;
                                \frac{\gamma_1 (1-\tilde\eta_2) \mykappa{mdc}}{3-2\tilde\eta_2}
                            \bigg\}, \] which does not depend on $\xi$.

                Finally, building on this retrospective trust-region scheme, \citet{FanPanHong16} proposed an algorithm combining it with the idea of tying the trust-region radius to the gradient norm, as in the previous section.
                They consider  $\Delta_k = \mu_k \| g_k \|$, where $\mu_k$ is updated according to the retrospective ratio $\tilde{\rho}_k$, see Algorithm~\ref{alg:RTR-NRTR}.
                \begin{algorithm}[htbp]
                    \small
                    \SetAlgoLined
                    \DontPrintSemicolon

                    $ \ldots $ \; ~\\

                    \textbf{Step 1.b: Retrospective scaling update.}
                    If $k=0$, skip to \textbf{Step 2}.
                    \begin{enumerate}[ ]
                        \item If $\rho_{k-1} < \eta_1$, set $\mu_k \in [\gamma_1 \, \mu_{k-1} \, , \, \gamma_2 \mu_{k-1})$.
                        \item Else, compute the \emph{retrospective} ratio $\tilde\rho_k$ in \eqref{eq:retrospective ratio} and set
                    \[
                        \mu_k \in
                        \begin{cases}
                            [\gamma_1 \, \mu_{k-1} \, , \, \gamma_2 \mu_{k-1}), & \tilde\rho_k < \tilde\eta_1, \\[6pt]
                            [\gamma_2  \, \mu_{k-1}, \, \mu_{k-1}), & \tilde\rho_k \in [\tilde\eta_1,\tilde\eta_2), \\[6pt]
                            (\mu_{k-1}, \, \gamma_3 \, \mu_{k-1}], & \tilde\rho_k \ge \tilde\eta_2 \text{ and } \norm{s_{k-1}} > \frac{1}{2} \Delta_{k-1}, \\[6pt]
                            \mu_{k-1}, & \text{ otherwise}.
                        \end{cases}
                    \]
                        \item  Set $ \Delta_k \;\gets\; \mu_k\,\|g_k\| $. 
                    \end{enumerate}
                    $ \ldots $ \; ~\\

                    \caption{Retrospective trust-region algorithm with radius tied to the gradient norm.}
                    \label{alg:RTR-NRTR}
                \end{algorithm}
                The convergence analysis of this algorithm can be adapted from the analysis of the gradient-scaled update rule presented in section~\ref{subsec:Relative gradient}.
                \begin{proposition}
                    For all \( k \geq 0 \), we have the following lower bounds on the trust-region parameter \( \mu_k \) and the trust-region radius \( \Delta_k \):
                    \begin{equation}
                        \mu_k \geq \frac{\mykappa{lbd}}{M_k} \quad \text{and} \quad  \Delta_k \geq \dfrac{\mykappa{lbd}}{M_k} \, \| g_k \|,
                    \end{equation}
                    where \( \mykappa{lbd} \;=\; \min\!\bigg\{1,\;
                                \mu_0 M_0,\;
                                \gamma_1 (1-\eta_1) \mykappa{mdc},\;
                                \dfrac{\gamma_1 (1-\tilde\eta_2) \mykappa{mdc}}{3-2\tilde\eta_2}
                            \bigg\}\).
                    \label{prop: lower bound mu_k and Delta_k retrospective}
                \end{proposition}
                \begin{proof}
                    The proof follows the same structure as the proof of the previous proposition and Proposition~\ref{prop: mu_k lower bound relative gradient}.
                \end{proof}

    \section{Verification of conditions for individual mechanisms} \label{sec:radius update mechanisms}

        We review trust-region update mechanisms and verify Condition~\ref{cond: delta_k lower bound}.  
        Conditions \ref{cond: decrease of the trust-region radius} and \ref{cond: increase of the trust-region radius} hold trivially or are easily imposed.

        \subsection{Some useful assumptions on the subproblem solution} \label{subsec: assumptions on the subproblem solution}

            Before establishing the lower bound on the trust-region radius, we first provide some additional conditions on the subproblem solution that will be useful in proofs involving the step length $\|s_k\|$, such as the one proposed at the end of subsection~\ref{subsec:Incremental radius update mechanisms} and in subsection~\ref{subsec:Step-size dependent updates}. 
            
            When conditions on the step are necessary, we will assume that the trust-region subproblem is solved using an iterative method, that satisfies conditions inspired from the truncated CG method \cite{Stei83}.
            We consider an early termination strategy for solving iteratively the (quasi-)Newton equation 
            $ H_k s = - g_k $, as outlined in \citet[][Chap. 7]{NoceWrig06}.
            This approach is based on monitoring the \textit{residual} $r_k$, defined as $H_k s_k =  - g_k + r_k$, 
            where \( s_k \) represents the inexact (quasi-)Newton step.
            \begin{condition}
                When the trust-region is inactive, i.e. \( \| s_k \| < \Delta_k \), then the procedure is terminated with the residual satisfying
                \begin{equation}
                    \Vert r_k \Vert \leq \xi_k \Vert g_k \Vert,
                    \label{eqn:stop criteria forcing sequence}
                \end{equation}
                where \( \{\xi_k\} \) is referred to as the \textit{forcing sequence}, and \( 0 < \xi_k < \xi \) for all \( k \), with $\xi \in (0,1)$.
                \label{cond: inactive trust-region condition subproblem}
            \end{condition}
            The forcing sequence \( \{\xi_k\} \) is a sequence of positive numbers that need to be strictly less than 1 to discard zero as a possible solution. 
            %The choice of \( \{\xi_k\} \) can significantly affect the convergence rate and the quality of the solution.
            \begin{assumptionAlgo}
%                To solve the trust-region subproblem, we use
                The trust-region subproblem \eqref{eq: TR subproblem} is solved by 
                an iterative method that produces a step \( s_k \) satisfying condition~\ref{cond: inactive trust-region condition subproblem}.
                \label{as: solver of the TR subproblem}
            \end{assumptionAlgo}     
            It is useful to characterise how the step length $\|s_k\|$ relates to the trust-region radius $\Delta_k$ and to the gradient norm when  the subproblem is solved approximately. 
            The following proposition provides this relationship under Assumption~AA.\ref{as: solver of the TR subproblem}.
            \begin{proposition}
                When the trust-region subproblem is solved using an iterative method satisfying Assumption~AA.\ref{as: solver of the TR subproblem}, then the step $s_k$ satisfy the following inequalities
                \begin{equation}
                    \min \left\{ \Delta_k, \dfrac{1 - \xi}{\| H_k \|} \| g_k \| \right\} \leq \| s_k \| \leq \Delta_k,
                    \label{eqn:stop criteria forcing sequence consequence for s_k}
                \end{equation}
                where we have used the same convention as in Theorem~\ref{th: Cauchy decrease}, i.e., for \( \|H_k\| = 0 \), we set \( \frac{1 - \xi}{\| H_k \|} = +\infty \).
                \label{prop: lower bound on s_k}
            \end{proposition}
            \begin{proof}
                When the Hessian is zero, the model is linear and the step is obtained at a point on the boundary of the trust region, i.e. \( \|s_k\| = \Delta_k \). 
                When \( H_k \neq 0 \), we have \( \|s_k\| \leq \Delta_k \) by definition of the trust-region subproblem.
                If the trust-region is inactive, i.e. \( \|s_k\| < \Delta_k \), then from Condition~\ref{cond: inactive trust-region condition subproblem}, we have, using the triangular inequality, \eqref{eqn:stop criteria forcing sequence} and the matrix norm induced by $\|\cdot\|$,
                \[
                    \| g_k \| \leq \| H_k s_k + g_k \| + \| H_k s_k \| \leq \| r_k \| + \| H_k \| \| s_k \|,
                \]
                and therefore
                \[
                \| s_k \| \geq \dfrac{1 - \xi}{\| H_k \|} \| g_k \|.
                \]
            \end{proof}
            
            \begin{remark}
            By imposing in the subproblem that the step \( s_k \) satisfies the Cauchy decrease condition AA.\ref{as: Cauchy decrease}, and
            either $\| s_k \| = \Delta_k$ or $\| H_k s_k + g_k \| \leq \xi_k \| g_k \|,$
            where \( \xi_k \leq \xi < 1 \) is a forcing sequence, we have
            \begin{equation}
                \| s_k \| \geq \dfrac{\mykappa{lbd}^\prime}{M_k} \min_{0 \leq i \leq k} \| g_i \|,
                \label{eqn: lower bound s_k}
            \end{equation}
            where \( \mykappa{lbd}^\prime = \min \left\{ \mykappa{lbd} \, ,\, 1 - \xi \right\} > 0 \).
            \label{rem: lower bound s_k}
        \end{remark}

        \subsection{Fixed-factor update} \label{subsec:Incremental radius update mechanisms}

            The most common and simplest trust-region mechanism is the \textit{incremental} update, in which the radius is scaled by fixed multiplicative factors from one iteration to the next.
            This rule embodies the standard philosophy of trust-region methods: shrink the radius after poor model agreement, maintain or cautiously expand it under moderate agreement, and enlarge it aggressively after very good agreement. 
            This approach is straightforward to implement, has been widely adopted in practice due to its simplicity and effectiveness, and has been widely studied in the literature \cite{ConnGoulToin00, NoceWrig06, More83, CartGoulToin12, CartGoulToin22}. The update is defined as follows.
            \begin{update}
                    \begin{equation}
                        \Delta_{k+1} \in 
                            \begin{cases}
                            [\gamma_1 \Delta_k, \gamma_2 \Delta_k), & \text{if } \rho_k < \eta_1, \\
                            [\gamma_2 \Delta_k, \Delta_k), & \text{if } \eta_1 \leq \rho_k < \eta_2, \\
                            [\Delta_k, +\infty), & \text{if } \rho_k \geq \eta_2.
                            \end{cases}
                    \end{equation}
                    where \( 0 < \gamma_1 \leq \gamma_2 < 1 \) and \( \eta \leq \eta_1 \leq \eta_2 < 1 \) with $\eta_1 \neq 0$. 
                    \label{eqn:ratio-based update}
            \end{update}
            \begin{remark}
                When necessary for the analysis, we can impose the following  condition on the update mechanism when $\rho_k \geq \eta_2$,
                $ 
                    \Delta_{k+1} \in [\Delta_k, \overline{\gamma_3} \Delta_k],
                $
                where \( \overline{\gamma_3} > 1 \) is a constant. 
                This condition is satisfied by most of the ratio-based update rules in the literature, and it is useful to prove $\liminf_{k \to \infty} \| g_k \| = 0 $ under the weaker assumptions in Theorem~\ref{thm: Liminf of the gradient norm under weaker assumption}, and also 
                to bound the number of unsuccessful iterations in the worst-case complexity analysis, as we have seen in Theorem~\ref{thm: Upper bound successful iter BTR}.
            \end{remark}
            \begin{example}[Ratio-based update rules from the literature]
                Several ratio-based update mechanisms have been proposed. Below we summarise three representative rules side by side.

                \[
                \begin{array}{llll}
                & \textbf{Simple rule} &
                \textbf{\citet{NoceWrig06}} &
                \textbf{\citet{ConnGoulToin00, More83}} \\[0.7em]
                \Delta_{k+1} =
                &
                \begin{cases}
                \gamma^{-1} \Delta_k, & \rho_k < \eta_1, \\
                \gamma \Delta_k, & \rho_k \geq \eta_1,
                \end{cases}
                &
                \begin{cases}
                \gamma_1 \Delta_k, & \rho_k < \eta_1, \\
                \gamma_3 \Delta_k, & \rho_k > \eta_2 \;\text{ and }\; \|s_k\|=\Delta_k, \\
                \Delta_k, & \text{otherwise},
                \end{cases}
                &
                \begin{cases}
                \gamma_1 \Delta_k, & \rho_k < \eta_1, \\
                \gamma_2 \Delta_k, & \eta_1 \leq \rho_k < \eta_2, \\
                \gamma_3 \Delta_k, & \rho_k \geq \eta_2,
                \end{cases}
                \end{array}
                \]
                where $0 < \gamma_1 \leq \gamma_2 < 1 < \gamma_3,\; 0 < \eta_1 \leq \eta_2 < 1$, and $\gamma > 1$.
        \end{example}
        We now establish the lower bound condition on the trust-region radius under the incremental update strategy.
        \begin{proposition}
            For all \( k \geq 0 \), the trust-region radius \( \Delta_k \) satisfies Condition~\ref{cond: delta_k lower bound}, with 
            \begin{equation}
                 \mykappa{lbd} = \min \left\{ 1 \, , \, \frac{\Delta_0 \, M_0 }{\| g_0 \|} \, , \, \gamma_1 \, \mykappa{mdc} \, (1 - \eta_2)  \right\}.
            \end{equation}
            \label{prop: delta_k lower bound incremental}
        \end{proposition}
        \begin{proof}
            The argument follows \citet[Chap.~2]{CartGoulToin22} where the proof proceeds by induction on $k$ using a  contradiction argument. For completeness, we provide a direct proof here using contraposition. \\
            \emph{Base case.} For \( k = 0 \), we have \( \Delta_0 \geq \frac{\mykappa{lbd}}{M_0} \, \| g_0 \| \) by definition of \( \mykappa{lbd} \). \\
            \emph{Induction step.} Suppose the result holds for some \( k \geq 0 \). We will show it holds for \( k + 1 \).

            \smallskip 
            \noindent (i) If $\rho_k \geq \eta_2$, the update rule~\eqref{eqn:ratio-based update} yields \( \Delta_{k+1} \geq \Delta_k \). Since \( M_{k+1} \geq M_k \), we obtain
            \[
                \Delta_{k+1} \geq \Delta_k \geq \frac{\mykappa{lbd}}{M_k} \, \, \min_{0 \leq j \leq k } \| g_j \| \geq \frac{\mykappa{lbd}}{M_{k+1}} \, \, \min_{0 \leq j \leq k +1 } \| g_j \|.
            \]
            (ii) If $\rho_k < \eta_2$, the contrapositive of Corollary~\ref{cor: delta_k lower bound} implies
                    \[
                    \Delta_k \;>\; \frac{\mykappa{mdc}(1-\eta_2)}{L+\|H_k\|}\,\|g_k\|
                    \;\ge\; \frac{\mykappa{mdc}(1-\eta_2)}{M_k}\,\min_{0\le j\le k}\|g_j\|.
                    \]
                    and, from update rule~\eqref{eqn:ratio-based update}, 
                    \[
                        \Delta_{k+1} \geq \gamma_1 \Delta_k \geq \frac{\gamma_1  \, \mykappa{mdc} \, (1 - \eta_2) }{M_k} \, \min_{0 \leq j \leq k } \| g_j \| \geq \frac{\mykappa{lbd}}{M_{k+1}} \, \min_{0 \leq j \leq k + 1 } \| g_j \|,
                    \]
            \smallskip
            In both cases the claim holds at $k+1$, completing the induction. The proof is thus complete.
        \end{proof}
        \citet[Chap.~10]{ConnGoulToin00} propose an aggressive update that contracts the radius sharply when the model fails. This prevents overly large radii from producing steps that minimise the model rather than the actual objective function. 
        The motivation is that when the computed step \( \|s_k\| \) is significantly smaller than the current trust-region radius \( \Delta_k \), the ratio $\rho_k$ evaluates the model only within a ball of radius $\|s_k\|$, not the prescribed $\Delta_k$. 
        If \( \rho_k \) is small or negative (i.e., \( \rho_k < \eta_1 \)), several reductions of $\Delta_k$ may be needed before excluding $x_k+s_k$, each costing an extra function evaluation. To mitigate this inefficiency, they propose the following more flexible update rule.
        \begin{update}           
            \[
            \Delta_{k+1} \in 
            \begin{cases}
                [\gamma_1 \|s_k\|, \gamma_2 \|s_k\|) & \text{if } \rho_k < \eta_1, \\
                [\gamma_2 \Delta_k, \Delta_k) & \text{if } \rho_k \in [\eta_1, \eta_2), \\
                [\Delta_k, +\infty) & \text{if } \rho_k \geq \eta_2,
            \end{cases}
            \]
            where \( 0 < \gamma_1 \leq \gamma_2 \leq 1 \). 
            \label{eqn:classical rule with aggressive decrease}
        \end{update}    
        \begin{remark}
            The update rule~\ref{eqn:classical rule with aggressive decrease} can be seen as a special case of the update rule~\ref{eqn:ratio-based update} with carefully chosen parameters.  See \citet[sec. 10.5.2]{ConnGoulToin00} for more details. 
        \end{remark}
        \begin{example}[Variant analysed in \citet{GouldOrbanSartenaerToint05}]
            A commonly used variant, valued for its simplicity and effectiveness in adjusting the trust-region radius according to the quality of the trial step, is:
            \[
            \Delta_{k+1} \in 
                \begin{cases}
                    \gamma_1 \|s_k\|, & \text{if } \rho_k < \eta_1, \\[0.4em]
                    \Delta_k, & \text{if } \rho_k \in [\eta_1, \eta_2), \\[0.4em]
                    \max\{ \gamma_3 \|s_k\|, \Delta_k \}, & \text{if } \rho_k \geq \eta_2,
                \end{cases}
            \]
            where $0 < \gamma_1 < 1 \leq \gamma_3$.
        \end{example}
        Typical parameter choices in the literature are \( \gamma_1 = 0.5 \), \( \gamma_3 = 2 \), \( \eta_1 = 0.25 \), and \( \eta_2 = 0.75 \). However, \citet{GouldOrbanSartenaerToint05} provide a sensitivity analysis and recommend \( \gamma_1 = 0.25 \), \( \gamma_3 = 3.5 \), \( \eta_1 = 0.0001 \), and \( \eta_2 = 0.99 \) for improved robustness.
        
        Finally, \citet{ConnGoulToin00} propose a heuristic to handle cases where the model is a poor approximation of the objective, especially when $\rho_k < 0$. 
        The strategy rescales the step so that the ratio is more likely to be classified as very successful in the next iteration. 
        Let $\phi_f(t) := f(x_k + t s_k)$ and $\phi_m(t) := m_k(x_k + t s_k)$ for $t\in[0,1]$. Consider a quadratic interpolant using the value and directional derivative at $t=0$, and the value at $t=1$,
        \[
            \phi_f^{\mathrm{quad}}(t) \;=\; f_k \;+\; (g_k^\top s_k)\, t \;+\; a_f\, t^2,
            \qquad
            a_f \;=\; f(x_k+s_k) - f_k - g_k^\top s_k.
        \]
        Considering a similar quadratic interpolant for the model, we find $t>0$ such that  $\frac{\phi_f^{\mathrm{quad}}(t)-\phi_f^{\mathrm{quad}}(0)}{\phi_m^{\mathrm{quad}}(t)-\phi_m^{\mathrm{quad}}(0)} =  \eta_2 $.
        Then,  
        \begin{equation}
            \theta_k 
            = \frac{(1 - \eta_2)\, \nabla f(x_k)^{\top} s_k }{(1 - \eta_2)\bigl[f(x_k) + \nabla f(x_k)^{\top} s_k \bigr] 
            + \eta_2 m_k(x_k + s_k) - f(x_k + s_k)}
            \label{eq:theta_k}
        \end{equation}
        The trust-region radius is then updated as follows, $\Delta_{k+1} = \min \Bigl[ \, \gamma_1 \, \|s_{k}\|, \; \max \bigl[ \gamma_0, \, \theta_{k} \bigr] \, \Delta_{k} \Bigr]$, where $0 <\gamma_0 < \gamma_1$. 
        \begin{corollary}
            When considering these aggressive decrease strategies under additional assumption~AA.\ref{as: solver of the TR subproblem}, the lower bound on the trust-region radius is satisfied with
            \begin{equation}
                \mykappa{lbd} = \min \left\{ 1 \, , \, \frac{\Delta_0 \, M_0 }{\| g_0 \|} \, , \, \gamma_0 \, \mykappa{mdc} \, (1 - \eta_2) \, , \, 1 - \xi \right\}.
            \end{equation}
        \end{corollary}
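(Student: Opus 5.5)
The plan is to follow the induction used for Proposition~\ref{prop: delta_k lower bound incremental}, changing only the branch where the aggressive rule contracts the radius relative to $\|s_k\|$ instead of $\Delta_k$. The \emph{base case} $k=0$ holds by the term $\Delta_0 M_0/\|g_0\|$ in $\mykappa{lbd}$. For the \emph{induction step}, assume $\Delta_k \ge \frac{\mykappa{lbd}}{M_k}\min_{0\le j\le k}\|g_j\|$. I will treat the three branches of Update Rule~\ref{eqn:classical rule with aggressive decrease} and the $\theta_k$ heuristic separately. In each branch the goal is $\Delta_{k+1}\ge \frac{c}{M_k}\min_{0\le j\le k}\|g_j\|$ for some $c\ge\mykappa{lbd}$. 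Monotonicity of $M_k$ and of the running minimum then gives the bound at $k+1$.

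The two branches that do not shrink much are immediate. If $\rho_k\ge\eta_2$, then $\Delta_{k+1}\ge\Delta_k$, and the induction hypothesis closes the step. If $\rho_k\in[\eta_1,\eta_2)$, then $\Delta_{k+1}\ge\gamma_2\Delta_k\ge\gamma_0\Delta_k$. Since $\rho_k<\eta_2$, Corollary~\ref{cor: delta_k lower bound} with $\eta_2$ gives $\Delta_k>\frac{\mykappa{mdc}(1-\eta_2)}{M_k}\|g_k\|$. This yields the constant $\gamma_0\mykappa{mdc}(1-\eta_2)$, exactly as in Proposition~\ref{prop: delta_k lower bound incremental}.

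The main case is $\rho_k<\eta_1$, where the new radius is proportional to $\|s_k\|$ or is $\min\{\gamma_1\|s_k\|,\max[\gamma_0,\theta_k]\Delta_k\}$. I will bound the two arguments of the min separately.

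For the $\Delta_k$ part, $\max[\gamma_0,\theta_k]\Delta_k\ge\gamma_0\Delta_k$, and Corollary~\ref{cor: delta_k lower bound} again gives $\gamma_0\Delta_k\ge \frac{\gamma_0\mykappa{mdc}(1-\eta_2)}{M_k}\|g_k\|$, using $\eta_1\le\eta_2$.

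For the step part, Proposition~\ref{prop: lower bound on s_k}, which holds under AA.\ref{as: solver of the TR subproblem}, gives $\|s_k\|\ge\min\{\Delta_k,\frac{1-\xi}{\|H_k\|}\|g_k\|\}\ge\min\{\Delta_k,\frac{1-\xi}{M_k}\|g_k\|\}$, since $\|H_k\|\le M_k$. Combining this with the Corollary~\ref{cor: delta_k lower bound} bound on $\Delta_k$ shows that $\|s_k\|$ is at least $\frac{\min\{\mykappa{mdc}(1-\eta_2),1-\xi\}}{M_k}\|g_k\|$. Alternatively, Corollary~\ref{cor: delta_k lower bound} can be applied directly to $\|s_k\|$, because it already bounds $\|s_k\|$ from below when $\rho_k<\eta$.

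Multiplying by the factor $\gamma_1\ge\gamma_0$ (or $\gamma_2\ge\gamma_0$ in the case $[\gamma_1\|s_k\|,\gamma_2\|s_k\|)$) gives a constant no smaller than $\min\{\gamma_0\mykappa{mdc}(1-\eta_2),\gamma_0(1-\xi)\}$.

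The obstacle I expect is this constant bookkeeping. The literal form $\gamma_0(1-\xi)$ differs from the stated $1-\xi$, and the contraction factor in front of $\|s_k\|$ is $\gamma_1$ in Update Rule~\ref{eqn:classical rule with aggressive decrease} but $\gamma_0<\gamma_1$ in the $\theta_k$ rule. I would handle it as follows. Whenever $\|s_k\|<\Delta_k$, Corollary~\ref{cor: delta_k lower bound} applied directly to $\|s_k\|$ gives the factor $\mykappa{mdc}(1-\eta_2)$, so after multiplying by $\gamma_0$ this is covered by the term $\gamma_0\mykappa{mdc}(1-\eta_2)$. The $1-\xi$ term then serves only to absorb the inactive-trust-region branch. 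If the constants do not line up exactly, I would shrink $\mykappa{lbd}$ by the factor $\gamma_0$ on the $\xi$ term; the claim that Condition~\ref{cond: delta_k lower bound} holds would be unaffected.
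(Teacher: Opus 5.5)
Your argument is correct, but its decisive step differs from the one the paper points to. The paper's proof is a one-line sketch: rerun the induction of Proposition~\ref{prop: delta_k lower bound incremental} and control $\norm{s_k}$ through Proposition~\ref{prop: lower bound on s_k}, which is where AA.\ref{as: solver of the TR subproblem} and the term $1-\xi$ enter. As you noticed, in the branch where the radius is contracted relative to the step, that route only yields $\gamma_1(1-\xi)$, or $\gamma_0(1-\xi)$ for the $\theta_k$ rule, not the bare $1-\xi$. Your alternative is the step form of Corollary~\ref{cor: delta_k lower bound}. It gives $\norm{s_k}>\mykappa{mdc}(1-\eta_2)\norm{g_k}/(L+\norm{H_k})$ whenever $\rho_k<\eta_1\le\eta_2$. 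Hence $\Delta_{k+1}\ge\min\{\gamma_1\norm{s_k},\gamma_0\Delta_k\}\ge\gamma_0\norm{s_k}\ge\gamma_0\mykappa{mdc}(1-\eta_2)\min_{0\le j\le k}\norm{g_j}/M_k$. With your other two branches, this closes the induction with $\min\{1,\Delta_0M_0/\norm{g_0},\gamma_0\mykappa{mdc}(1-\eta_2)\}$. That constant is at least the stated one, so the corollary holds as written. What your route buys is that neither AA.\ref{as: solver of the TR subproblem} nor the $1-\xi$ term is needed; including $1-\xi$ in the minimum is harmless but superfluous. That term is genuinely needed in the step-driven Proposition~\ref{prop: lower bound Delta_k step-size dependent}, where the expanding branch only guarantees $\Delta_{k+1}\ge\norm{s_k}$. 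Here the expanding branch guarantees $\Delta_{k+1}\ge\Delta_k$.

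Your final paragraph should be cleaned up accordingly. The step form of the corollary does not require $\norm{s_k}<\Delta_k$; it holds whether or not the trust region is active. So no inactive branch is left over for $1-\xi$ to absorb. Drop the fallback of shrinking the $\xi$-term by $\gamma_0$: it would prove a weaker constant than the one claimed, and it is unnecessary. A minor slip: for $\Delta_{k+1}\in[\gamma_1\norm{s_k},\gamma_2\norm{s_k})$, the usable factor is the lower endpoint $\gamma_1$, not $\gamma_2$.
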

        \begin{proof}
            This can be obtained by using Proposition~\ref{prop: lower bound on s_k} and adapting the proof of Proposition~\ref{prop: delta_k lower bound incremental}.
        \end{proof}

        \subsection{Step-driven update} \label{subsec:Step-size dependent updates}

            The trust-region radius can also be updated based on the step length $\|s_k\|$. 
            This strategy adapts more directly to the behaviour of the iterates: it decreases the radius aggressively when the model proves inaccurate and increases it more conservatively when the model is reliable. 
            By tying the radius to the actual step taken, this approach naturally drives $\Delta_k$ toward zero in the limit when the iterates converge to a stationary point.
            \citet[sec. 10.5.2]{ConnGoulToin00} and \citet{Powe70,Powe75,Powe84} define an update rule of the form,
                \begin{update}
                    \begin{equation}
                        \Delta_{k+1} \in 
                        \begin{cases}
                            [\gamma_1 \|s_k\|, \gamma_2 \|s_k\|) & \text{if } \rho_k < \eta_1, \\
                            [\gamma_2 \|s_k\|, \|s_k\|) & \text{if } \rho_k \in [\eta_1, \eta_2), \\
                            [\|s_k\|, +\infty) & \text{if } \rho_k \geq \eta_2, 
                        \end{cases}
                    \end{equation}
                    where \( 0  < \gamma_1 \leq \gamma_2 < 1 \) and \(\eta \leq \eta_1 \leq \eta_2 < 1 \) with $\eta_1 \neq 0$.
                    \label{update: step-size dependent update}
                \end{update}

                \begin{example}
                    \citet{Powe75, Powe84} define with \( 0 < \gamma_1 \leq \gamma_2 < 1 < \gamma_3 \) and \( 0 \leq \eta < \eta_1 < 1 \), the following update rule:
                    \[
                        \Delta_{k+1} \in 
                                \begin{cases}
                                [\gamma_1 \| s_k \|, \gamma_2 \| s_k \|], & \text{if } \rho_k < \eta_1, \\
                                [\| s_k \|, \gamma_3 \| s_k \|], & \text{if } \rho_k \geq \eta_1   \\
                                \end{cases}
                    \]
                \end{example}
            In practice, we observe that the ratio \( \rho_k \) can vary significantly across iterations. This variability can be attributed to several factors, including the choice of model, the trust-region radius, and the nature of the objective function. 
            \citet{Hei03} proposed an update strategy where the trust-region radius is proportional to the step length \( \|s_k\| \), scaled by an adaptive factor \( R_{\eta_1}(\rho_k) \) that depends on the ratio \( \rho_k \), for an example see Figure~\ref{fig: R_eta_1} in Appendix~\ref{appendix: example of R-function}.
            This rule adjusts the radius more flexibly according to the quality of the step, especially when the model is a poor approximation of the function.
            The update rule is given by:
            \begin{update}
                \[
                     \Delta_{k+1} = R_{\eta_1}(\rho_k) \|s_k\|,
                \]
                where \( R_{\eta_1} : \mathbb{R} \to \mathbb{R}_+ \) is a \textbf{non-decreasing} function satisfying, with \(0 \leq  \eta  < \eta_1 < 1 \) and \( 0 < \gamma_1 \leq \gamma_2 < 1 \),
                \begin{align*}
                    & \lim_{t \to -\infty} R_{\eta_1}(t) = \gamma_1 ; \\
                    & \forall \, t < \eta,\,  R_{\eta_1}(t) \leq \gamma_2 ; \\
                    & R_{\eta_1}(\eta_1) = 1 + \gamma_2; \\
                    & \lim_{t \to +\infty} R_{\eta_1}(t) = \gamma_3,  \text{ where } \gamma_3 > 1 + \gamma_2.
                \end{align*}
                
                \label{update: adaptive step-size dependent update}
            \end{update}
                    
            \begin{remark}
                This update rule fits in the general framework of update rule~\ref{update: step-size dependent update} with \( \gamma_1 = R_{\eta_1}(-\infty) \), \( \gamma_2 = R_{\eta_1}(\eta_1) \), and \( \gamma_3 = R_{\eta_1}(+\infty) \). Then, for all iterations \( k \geq 0 \), we have $\Delta_{k+1} \in 
                    \begin{cases}
                        [\gamma_1 \|s_k\|, \gamma_2 \|s_k\|) & \text{if } \rho_k < \eta_1 \\
                        [(1 + \gamma_2) \|s_k\|, \gamma_3 \| s_k \|) & \text{if } \rho_k \geq \eta_1
                    \end{cases}$.
            \end{remark}

            \begin{proposition}
                Assume the subproblem satisfies Assumption~AA.\ref{as: solver of the TR subproblem}. Then, for all \( k \geq 0 \), the trust-region radius \( \Delta_k \) satisfies Condition~\ref{cond: delta_k lower bound}, with  
                \[
                    \mykappa{lbd} = \min \left\{ 1, \, \frac{\Delta_0 M_0}{\| g_0 \|}, \, \gamma_1 \, (1 - \eta_2) \mykappa{mdc} \, , \, 1 - \xi \right\}
                \]
                and $\xi \in [0, 1)$ is a uniform upper bound on the forcing sequence \( \{ \xi_k \} \) defined in~\eqref{eqn:stop criteria forcing sequence}.
                \label{prop: lower bound Delta_k step-size dependent}
            \end{proposition}
            \begin{proof}
                The proof can be obtained by using Proposition~\ref{prop: lower bound on s_k} and adapting the proof by induction of Proposition~\ref{prop: delta_k lower bound incremental}. \\
                \emph{Base case.} For \( k = 0 \), we have \( \Delta_0 \geq \frac{\mykappa{lbd}}{M_0} \, \| g_0 \| \) by definition of \( \mykappa{lbd} \). \\
                \emph{Induction step.} Suppose the result holds for some \( k \geq 0 \). We will show it holds for \( k + 1 \).
                \begin{enumerate}
                    \item If $\rho_k < \eta_2$, from Corollary~\ref{cor: delta_k lower bound} we have
                            $
                                \norm{s_k} > \frac{\mykappa{mdc} (1 - \eta_2)}{L + \norm{H_k}} \, \norm{g_k}.
                            $
                            Then, from the update rule~\ref{update: adaptive step-size dependent update}, we have
                            \[
                                \Delta_{k+1} \geq \gamma_1 \|s_k\| > \frac{\gamma_1 \, \mykappa{mdc} \, (1 - \eta_2)}{L + \|H_k\|} \, \|g_k\| \geq \frac{\mykappa{lbd}}{M_{k+1}} \, \min_{0 \leq j \leq k + 1 } \| g_j \|.
                            \]
                    \item Else if $\rho_k \geq \eta_2$, we have $\Delta_{k+1} \geq \|s_k\|$. Then, from Proposition~\ref{prop: lower bound on s_k}, we have
                            \[
                                \Delta_{k+1} \geq \|s_k\| \geq \min \left\{ \Delta_k \, , \, \frac{1 - \xi}{\norm{H_k}} \, \| g_k \| \right\} \geq \min \left\{ \mykappa{lbd} \, , \, (1 - \xi)  \right\} \, \frac{\min_{0 \leq j \leq k } \| g_j \|}{M_k} \geq \frac{\mykappa{lbd}}{M_{k+1}} \, \min_{0 \leq j \leq k + 1 } \| g_j \|.
                            \]
                \end{enumerate}
                In both cases the claim holds at $k+1$, completing the induction. The proof is thus complete.
            \end{proof}
            
            \begin{remark}
        It would be possible to combine the aggressive decrease strategy with an adaptive coefficient based on the ratio \( \rho_k \).
        For example, one could define the update as
        \[
            \Delta_{k+1} = 
            \begin{cases}
                R_{\eta_1}(\rho_k) \|s_k\| & \text{if } \rho_k < \eta_1, \\
                R_{\eta_1}(\rho_k) \Delta_k & \text{if } \rho_k \geq \eta_1,
            \end{cases}
        \]
        where \( R_{\eta_1}(\rho_k) \) is defined as in Update~\ref{update: adaptive step-size dependent update}.
       \end{remark}

    \subsection{Criticality-anchored update} \label{subsec:Related to criticality measures}

        \subsubsection{Inspiration and motivation}

        In \textit{Derivative-Free Optimisation} (DFO) \cite{ConnScheVice09_introduction}, where the gradient is not available or is unreliable, the model is typically constructed using function evaluations only. In this context, the trust-region radius update mechanism is crucial to ensure that the model remains a valid approximation of the objective function. A large \( \Delta_k \) may lead to steps that are too aggressive and not reflective of the local curvature.  A \emph{criticality step} is introduced to guarantee that the sequence of iterates \( \{x_k\} \) approaches first-order critical points.  
        Near a stationary point, the gradient norm \( \|g_k\| \) becomes small, therefore the trust-region radius \( \Delta_k \) must also shrink appropriately. This can be done by introducing a criticality measure that quantifies how close the current iterate is to a stationary point, and the trust-region radius is forced to satisfy:
        \begin{equation}
            \Delta_k \leq \zeta \, \chi_k^\alpha, \quad \text{for some } \zeta > 0, \, \alpha \in (0, 1],
            \label{eqn:criticality step}
        \end{equation}
        where \( \chi_k \) be a measure of criticality (e.g., \( \chi_k = \|g_k\| \)).
        Then, the criticality step ensures that the trust-region radius shrinks appropriately as the iterates approach critical points, allowing theoretical proofs of convergence to stationary points, even in the absence of gradient or Hessian information.
        This DFO-inspired thread has produced unified convergence frameworks across trust-region and line-search methods, in both deterministic and stochastic settings~\cite{ChenMeniSche18, CurtSche20, BlanCartMeniSche19}.
        Their work fits in the general framework presented in Algorithm~\ref{alg:adaptive-framework} where the role of $\alpha_k$ is to control the length of the step $s_k$.
        For example, in the context of line-search methods, $\alpha_k$ can be seen as the step length along the search direction, and the model $m_k$ can be defined as a linear approximation of $f$ around $x_k$. And, in the context of trust-region methods, $\alpha_k$ is seen as the trust-region radius, and the model $m_k$ can be a quadratic approximation of $f$ around $x_k$. 
        \begin{algorithm}[htbp]
            \small
            \SetAlgoLined
            \DontPrintSemicolon

            \textbf{Step 0: Initialisation.} 
            Choose constants $\eta \in (0,1)$, $\gamma \in (1,\infty)$, and $\alpha_{\max} \in (0,\infty)$. \\
            Choose an initial iterate $x_0 \in \mathbb{R}^p$ and step-size parameter $\alpha_0 \in (0,\alpha_{\max}]$. \; ~\\

            \textbf{Step 1: Determine model and compute step.}
            Choose a local model $m_k$ of $f$ around $x_k$. 
            Compute a step $s_k(\alpha_k)$ such that the model reduction
            $
            m_k(x_k) - m_k(x_k+s_k(\alpha_k)) \ge 0
            $
            is sufficiently large. \; ~\\

            \textbf{Step 2: Check for sufficient reduction in $f$.}
            Check whether the reduction
            $
            f(x_k) - f(x_k+s_k(\alpha_k))
            $
            is sufficiently large relative to the model reduction
            $
            m_k(x_k) - m_k(x_k+s_k(\alpha_k))
            $
            using a condition parameterised by $\eta$. \; ~\\

            \textbf{Step 3: Successful iteration.}
            If sufficient reduction has been attained (along with other potential requirements), set
            $
            x_{k+1} \gets x_k + s_k(\alpha_k)$, and 
            $
            \alpha_{k+1} \gets \min\{\gamma \alpha_k,\, \alpha_{\max}\}.
            $
            \; ~\\

            \textbf{Step 4: Unsuccessful iteration.}
            Otherwise, set
            $
            x_{k+1} \gets x_k$, and 
            $
            \alpha_{k+1} \gets \gamma^{-1}\alpha_k .
            $
            \; ~\\

            \textbf{Step 5: Next iteration.}
            Set $k \gets k+1$. \;

            \caption{Adaptive deterministic framework \cite{CurtSche20}.}
            \label{alg:adaptive-framework}
        \end{algorithm}
        To be comparable to a line-search, the update mechanism needs to be more conservative. The acceptance of the step is based on the ratio $\rho_k$ and the radius being small relative to the gradient norm.
        The update rule can be summarised as follows with \( \gamma > 1 \) and \( \zeta > 0 \),
            \[
            (x_{k+1}, \Delta_{k+1}) =
            \begin{cases}
                \bigl(x_k + s_k, \; \gamma \Delta_k \bigr), 
                    & \text{if } \rho_k \geq \eta_1 \text{ and } \Delta_k \leq \zeta \|g_k\|, \\[6pt]
                \bigl(x_k, \; \gamma^{-1} \Delta_k \bigr), 
                    & \text{otherwise}.
            \end{cases}
            \]
        \citet{GratRoyeViceZhan17} and \citet{BandScheVice14} propose a similar update strategy, but the acceptance of the step and the update of the radius are decoupled, which is closer to the framework we propose in Algorithm~\ref{alg:generic BTR}.
        If $\rho_k \ge \eta_1$, set $x_{k+1} = x_k + s_k$ and
        \[
        \Delta_{k+1} =
        \begin{cases}
        \min\{\gamma_3 \Delta_k,\; \Delta_{\max}\}, & \text{if } \Delta_k \leq \zeta \|g_k\|,\\[6pt]
        \gamma_1 \Delta_k, & \text{otherwise}.
        \end{cases}
        \]
        Otherwise, set $x_{k+1} = x_k$ and $\Delta_{k+1} = \gamma_1 \Delta_k$. 

        In the following, we present some variants of such update mechanisms which fit in our framework and we show that they satisfy the lower bound condition on the trust-region radius.

        \subsubsection{DFO-like update} \label{subsec:DFO-like update mechanism}

            We present an update mechanism inspired from \cite{BandScheVice14,BlanCartMeniSche19, CurtSche20, GratRoyeViceZhan17}. This reflects line-search intuition, where the step length is typically proportional to the gradient norm, suggesting that the trust region should scale accordingly. Building on this idea, we adopt the following update rule.
            \begin{update}
                Choose constants \( \eta \leq \eta_1 < 1 \), \( 0 < \gamma_1 \leq \gamma_2 < 1 <  \gamma_3 \) and \( \zeta > 0 \).
                \[
                \Delta_{k+1} =
                \begin{cases}
                    [\gamma_1 \Delta_k \, ; \, \gamma_2 \Delta_k) & \text{if } \rho_k < \eta_1 , \\
                    [\gamma_2 \Delta_k \, ; \, \Delta_k) & \text{if } \rho_k \geq \eta_1 \text{ \bf but } \Delta_k > \zeta \|g_k\|, \\
                    [\Delta_k \, ; \, + \infty) & \text{if } \rho_k \geq \eta_1 \text{ \bf and } \Delta_k \leq \zeta \|g_k\|.
                \end{cases}
                \]
                \label{update:DFO-like update}
            \end{update}
            \begin{remark}
                Again, it is possible to modify the update with $\Delta_{k+1} \in [\Delta_k, \gamma_3 \Delta_k]$ when $\rho_k \geq \eta_1$ and $\Delta_k \leq \zeta \|g_k\|$ to have the conditions required for the convergence analysis when the Hessian satisfies the weak assumption.
            \end{remark}
            \begin{proposition}
                For all \( k \geq 0 \), the trust-region radius \( \Delta_k \) satisfies Condition~\ref{cond: delta_k lower bound}, with 
                \[
                    \mykappa{lbd} = \min \left\{ 1, \, \gamma_2 \, \zeta \, M_0 , \, \frac{\Delta_0 M_0}{\| g_0 \|}, \, \gamma_1 \, (1 - \eta_1) \mykappa{mdc} \right\}.
                \]
                \label{prop: delta_k lower bound DFO}
            \end{proposition}
            \begin{proof}
                The proof is done by induction on \( k \).

                \noindent \emph{Base case.} For \( k = 0 \), we have \( \Delta_0 \geq \frac{\mykappa{lbd}}{M_0} \, \| g_0 \| \) by definition of \( \mykappa{lbd} \).       

                \noindent \emph{Induction step.} Suppose the result holds for some \( k \geq 0 \). We will show it holds for \( k + 1 \).
                We consider the following cases. 
                \begin{enumerate}[(i)]
                    \item Let \( \rho_k \geq \eta_1 \) and \(\Delta_k  \leq \zeta  \|g_k\| \). Then, by the update rule~\ref{update:DFO-like update}, we have
                    $
                        \Delta_{k+1} \geq  \Delta_k \geq \frac{\mykappa{lbd}}{M_{k+1}} \, \min_{0 \leq j \leq k + 1 } \| g_j \|.
                    $
                    \item Now consider \( \rho_k < \eta_1 \) or \(\Delta_k  > \zeta  \|g_k\| \). \\
                    If \( \rho_k < \eta_1 \). By Corollary~\ref{cor: delta_k lower bound}, we have \( \Delta_k > \frac{\mykappa{mdc} \, (1 - \eta_1) }{L + \| H_k \| } \, \| g_k \| \). And, by the update rule~\ref{update:DFO-like update}, we have
                        \[
                            \Delta_{k+1} \geq \gamma_1 \Delta_k \geq \frac{\gamma_1 \, (1 - \eta_1) \mykappa{mdc}}{M_k} \, \min_{0 \leq j \leq k } \| g_j \|  \geq  \frac{\mykappa{lbd}}{M_{k+1}} \,  \min_{0 \leq j \leq k + 1 } \| g_j \|.
                        \]
                    Now consider \(\Delta_k  > \zeta  \|g_k\| \). By the update rule~\ref{update:DFO-like update}, we have
                        \[
                            \Delta_{k+1} \geq \gamma_2 \Delta_k \geq \gamma_2 \zeta \| g_k \| \geq \frac{\gamma_2 \, \zeta \, M_{0}}{M_{k+1}} \, \min_{0 \leq j \leq k + 1 } \| g_j \| \geq  \frac{\mykappa{lbd}}{M_{k+1}} \,  \min_{0 \leq j \leq k + 1 } \| g_j \|.
                        \]       
                    \end{enumerate}
                    So, we have shown in all cases \( \Delta_{k+1} \geq \frac{\mykappa{lbd}}{M_{k+1}} \, \min_{0 \leq j \leq k + 1 } \| g_j \| \).
                    The proof is complete.
            \end{proof}
            In our work, we consider update rule~\ref{update:DFO-like update} for analysis and comparison purposes, but we present two other update rules that can be found in the literature \cite{BandScheVice14} that are similar in spirit, but differ in the details of the update mechanism for practical or convergence analysis benefits.  In practice, most algorithms employ two thresholds: one that triggers an increase of the radius and another that triggers a decrease.
            When the agreement between model and function lies between these thresholds, the radius remains unchanged.
            \begin{example}
                Choose constants \( \eta \leq \eta_1 < 1 \), \( \gamma > 1 \) and \( 0 < \zeta_1 < \zeta_2 \). 
                If \( \rho_k \geq \eta_1 \), then 
                \[
                \Delta_{k+1} = 
                \begin{cases}
                    \gamma^{-1} \Delta_k & \text{if } \|g_k\| < \zeta_1 \Delta_k, \\
                    \Delta_k & \text{if } \zeta_1 \Delta_k \leq \|g_k\| < \zeta_2 \Delta_k, \\
                    \min\{\gamma \Delta_k, \Delta_{\max} \} & \text{if } \zeta_2 \Delta_k \leq \|g_k\|.
                \end{cases}
                \]
                Otherwise, \( \Delta_{k+1} = \gamma^{-1} \Delta_k \).
            \end{example}

        \subsection{Gradient-scaled update} \label{subsec:Relative gradient}

            \citet{YuanFan01} propose a mechanism to control the trust-region radius based on the gradient norm. It ties the radius to the norm of the gradient, allowing for a more adaptive approach to trust-region management and adapting the radius to the local geometry of the objective function.
            This approach naturally contracts the radius when the gradient is small and expands it when the gradient is large, providing a responsive mechanism for trust-region management.
            The update rule is defined as follows:
            \begin{update}
                Let \( 0 < \gamma_1 \leq \gamma_2 < 1 < \gamma_3 \), \( \eta \leq \eta_1 \leq \eta_2 < 1 \). We have
                $\Delta_k = \mu_k \| g_k \|$
                where \( \mu_k \) follows the update :
                \[  
                    \mu_{k+1} =
                    \begin{cases}
                        [\gamma_1 \mu_k, \gamma_2 \mu_k), & \text{if } \rho_k < \eta_1, \\
                        [\gamma_2 \mu_k, \mu_k), & \text{if } \eta_1 \leq \rho_k < \eta_2, \\
                        [\mu_k, +\infty), & \text{if } \rho_k \geq \eta_2 %\text{ and } \|s_k\| > \frac{1}{2} \Delta_k, \\
                        % \mu_k, & \text{otherwise}.
                    \end{cases}
                \]
            \label{update:relative gradient update}
            \end{update}
            \begin{example}
                \citet{Yuan15} studied the generalisation  $\Delta_k = \mu_k \| g_k \|^\alpha$ for some \( \alpha \in (0, 1] \), for which the worst-case complexity analysis can be found in \cite{GrapigliaYuanYuan15,GrapigliaYuanYuan16}. The update rule for $\mu_k$ is defined as follows:
                \[  
                    \mu_{k+1} =
                        \begin{cases}
                            \gamma_1 \mu_k, & \text{if } \rho_k < \eta_1, \\
                            \gamma_2 \mu_k, & \text{if } \eta_1 \leq \rho_k < \eta_2, \\
                            \gamma_3 \mu_k, & \text{if } \rho_k \geq \eta_2 \text{ and } \|s_k\| > \frac{1}{2} \Delta_k, \\
                            \mu_k, & \text{otherwise}.
                        \end{cases}
                    \]
                This example showcases how to adjust the expansion to verify the convergence conditions when the Hessian satisfies the weak assumption. 
            \end{example}
             
             \begin{example}
                Moreover, following the discussions on the various update mechanisms, we can consider combining the gradient relative coefficient update with the adaptive step-size dependent update.  \\
                Let's consider $ \Delta_k = \mu_k \| g_k \|$ with $\mu_k$ updated as in update rule~\ref{update:relative gradient update} and the step-size dependent update defined as follows 
                \[
                    \mu_{k+1} =
                        \begin{cases}
                            \mu_k & \text{if } \rho_k \geq \eta_2 \text{ and } \norm{s_k} \leq \dfrac{1}{2} \Delta_k, \\
                            R_{\eta}(\rho_k) \, \mu_k, & \text{otherwise }, 
                        \end{cases}
                \]
             \end{example}
            \begin{proposition}
                For all \( k \geq 0 \), we have the following lower bounds on the trust-region parameter \( \mu_k \) and the trust-region radius \( \Delta_k \):
                \begin{equation}
                    \mu_k \geq \frac{\mykappa{lbd} }{M_k} \quad \text{and} \quad  \Delta_k \geq \dfrac{\mykappa{lbd}}{M_k} \, \| g_k \|,
                \end{equation}
                where \( \mykappa{lbd} = \min \left\{ 1, \, \mu_0 \, M_0, \, \gamma_1 \, (1 - \eta_2) \mykappa{mdc} \right\} > 0\).
                \label{prop: mu_k lower bound relative gradient}
            \end{proposition}
            \begin{proof}
                By adapting the proof of Theorem~\ref{thm: successful iteration for low Delta}, and \( \Delta_k = \mu_k \| g_k \| \), we have for any $\eta \in [0, 1)$,
                \begin{equation}
                    \mu_k \leq \dfrac{\mykappa{mdc} (1 - \eta)}{L + \| H_k \|} \text{ implies that } \rho_k \geq \eta \quad \text{ and } \quad \rho_k < \eta \text{ implies that } \mu_k > \frac{\mykappa{mdc} \, (1 - \eta) }{L + \| H_k \|}.
                    \label{eqn: low mu_k consequence}
                \end{equation}
                We prove the result by induction on \( k \). 
                
                \noindent \emph{Base case.} For \( k = 0 \), we have \( \mu_0 \geq \frac{\mykappa{lbd}}{M_0} \) by definition of \( \mykappa{lbd} \).

                \noindent \emph{Induction step.} Suppose the result holds for some \( k \geq 0 \). We will show it holds for \( k + 1 \).
                \begin{enumerate}[(i)]
                    \item If \( \rho_k < \eta_2\), from \eqref{eqn: low mu_k consequence} and update rule~\ref{update:relative gradient update}, 
                    $
                        \mu_{k+1} \geq    \gamma_1 \mu_k > \gamma_1 \frac{\mykappa{mdc} \, (1 - \eta_2) }{L + \| H_k \|} \geq \frac{\gamma_1 \, (1 - \eta_2) \mykappa{mdc}}{M_k} \geq \frac{\mykappa{lbd}}{M_{k+1}}. 
                    $
                    \item If \( \rho_k \geq \eta_2\), then 
                    $
                        \mu_{k+1} \geq  \mu_k \geq  \frac{\mykappa{lbd}}{M_k} \geq \frac{\mykappa{lbd}}{M_{k+1}}.
                    $
                \end{enumerate}
                Finally, we have for all $k \geq 0$, \( \mu_{k} \geq \frac{\mykappa{lbd}}{M_{k}} \) and \( \Delta_k = \mu_k \| g_k \| \geq \frac{\mykappa{lbd}}{M_k} \, \| g_k \| \).
            \end{proof}
            \citet{CurtisLubbertsRobinson2018} propose a slightly different updating strategy for the trust-region radius that allows one to obtain a first-order and second-order complexity bounds in a very concise convergence analysis. Only considering first-order convergence, the update rule is defined as follows:
            \begin{update}
                \[
                \mu_{k+1} \in
                    \begin{cases}
                        \gamma_1 \mu_k , & \text{if } \rho_k < \eta_1 \\
                        [\underline{\mu}, \bar{\mu}] & \text{otherwise} 
                    \end{cases}
                \]
                where \( 0 \leq \eta <\eta_1 < 1 \), \( \gamma_1 \in (0 ; 1 )\),  \( 0 < \underline{\mu} \leq \bar{\mu} < \infty\).
                \label{update:relative gradient update Curtis}
            \end{update}
            \begin{proposition}
                For all \( k \geq 0 \), the lower bounds on the trust-region parameter \( \mu_k \) and the trust-region radius \( \Delta_k \) are similar to Proposition~\ref{prop: mu_k lower bound relative gradient}, 
                where
                \[
                    \mykappa{lbd} = \min \left\{ 1 \, ; \, \mu_0 \, M_0 \, ; \, \underline{\mu} \, M_0 \, ; \, \gamma_1 \, (1 - \eta_1) \mykappa{mdc}\right\}.
                \]
                \label{prop: mu_k lower bound relative gradient Curtis} 
            \end{proposition}
            \begin{proof}
                The proof is similar to the one of proposition~\ref{prop: mu_k lower bound relative gradient}. 
                We only need to adapt the case when $\rho_k \geq \eta_1$, then
                \[ 
                    \mu_{k+1} \geq \underline{\mu} \geq \underline{\mu} \, \frac{M_0}{M_{k+1}}  \left(\frac{M_{k+1}}{M_0}\right) \geq \frac{\mykappa{lbd}}{M_{k+1}} \quad \text{because} \quad \frac{M_{k+1}}{M_0} \geq 1.
                \]
            \end{proof}
            \begin{remark}
                Update rule~\ref{update:relative gradient update}~or~\ref{update:relative gradient update Curtis}, a priori don't satisfy Condition~\ref{cond: increase of the trust-region radius}. 
                However, in most convergence analysis, such as in \cite{CurtisLubbertsRobinson2018}, the model Hessian is assumed uniformly bounded, which only needs Condition~\ref{cond: decrease of the trust-region radius} to establish $\lim_{k \to +\infty} \norm{g_k} = 0$. 
            \end{remark}
            For the update rule~\ref{update:relative gradient update}, the convergence analysis can be extended 
            to the weaker Hessian growth assumption~AM.\ref{as: Linear growth bound model Hessian} by controlling 
            the expansion of the gradient coefficient \( \mu_k \). This is achieved by imposing a uniform bound 
            on \( \mu_k \) across iterations. In particular, when \( \rho_k \ge \eta_2 \), we require
            \[
            \mu_{k+1} \le \min\{\gamma_3 \mu_k,\;\overline{\mu}\},
            \]
            for some constant \( \overline{\mu} > 0 \).
            The following proposition shows how this update mechanism offers some control over the gradient norm across iterations.
            \begin{proposition}
                Assume Lipschitz continuous gradients (AF.\ref{as: Lipschitz continuous gradient}), and that there exists a constant $\overline{\mu} > 0$ such that for all $k \geq 0$, $\mu_k \leq \overline{\mu}$. 
                Then $\| g_{k+1} \| \leq (L \overline{\mu}  + 1 ) \| g_k \|$.
                \label{prop: evolution gradient relative coefficient}
            \end{proposition}
            \begin{proof}
                    $
                        \| g_{k+1} \| \leq L \| s_k \| + \| g_k \| \leq L \Delta_k + \| g_k \| = L \mu_k \| g_k \| + \| g_k \| = (L \mu_k + 1) \| g_k \| \leq (L \overline{\mu} + 1) \| g_k \|.
                    $
            \end{proof}
              
            \begin{proposition}
                Assume Lipschitz continuous gradients (AF.\ref{as: Lipschitz continuous gradient}) and consider update rule~\ref{update:relative gradient update} where $\rho_k \geq \eta_2$ implies $\mu_{k+1} \in [\mu_k \, ; \, \min\{\gamma_3 \mu_k,\;\overline{\mu}\}]$, $(\gamma_3 > 1)$, and $\gamma_2 \, (L \overline{\mu}  + 1 ) < 1$.
                Then, Condition~\ref{cond: increase of the trust-region radius} holds with $\myeta{inc} = \eta_1$,
                \( \overline{\gamma_3} = \gamma_3 (L \overline{\mu}  + 1 )\) and \(\underline{\gamma_2} = \gamma_2 (L \overline{\mu}  + 1 )\).
                \label{prop: decrease and increase of the trust-region radius relative gradient}
            \end{proposition}
            \begin{proof}
                We use Prop.~\ref{prop: evolution gradient relative coefficient} to control the evolution of the gradient norm across iterations.
                When \( \rho_k < \eta_1 \), we have \( \mu_{k+1} \in [\gamma_1 \mu_k, \gamma_2 \mu_k) \). Then,     
                $
                    \Delta_{k+1} = \mu_{k+1} \| g_{k+1} \| \leq \gamma_2 \mu_k (L \overline{\mu}  + 1 ) \| g_{k} \| = \underline{\gamma_2} \Delta_k,
                $
                where \( \underline{\gamma_2} = \gamma_2 (L \overline{\mu}  + 1 ) < 1 \).\\
                Otherwise,  if \( \rho_k \geq \eta_1 \),
                    $
                        \Delta_{k+1} = \mu_{k+1} \| g_{k+1} \| \leq \gamma_3 \mu_k (L \overline{\mu}  + 1 ) \| g_k \| = \overline{\gamma_3} \Delta_k,
                    $
                    where \( \overline{\gamma_3} = \gamma_3 (L \overline{\mu}  + 1 ) > 1 \).
            \end{proof}
            \begin{remark}
                In Proposition~\ref{prop: decrease and increase of the trust-region radius relative gradient}, we consider the case $\eta < \eta_1$ with a strong condition on  $\gamma_2$ to decouple the acceptance of the step and the update of the radius. 
                However, the Lipschitz constant of the gradient is often unknown and the $\gamma_2$ condition is quite strong, imposing a very conservative decrease of the radius.\\
                In practice, it is easier to consider $\eta >0$. 
                This implies that when
                \( \rho_k < \eta \), we have \( \mu_{k+1} \in [\gamma_1 \mu_k, \gamma_2 \mu_k) \) and  $x_{k+1} = x_k $. Then, 
                $
                    \Delta_{k+1} = \mu_{k+1} \| g_{k+1} \| \leq \gamma_2 \mu_k \| g_{k} \| \leq  \gamma_2 \Delta_k
                $.
                Then, we consider $ \myeta{inc} = \eta >0$ to satisfy Condition~\ref{cond: increase of the trust-region radius}.
                \label{remark: eta_dec and eta_inc relative gradient}
            \end{remark}

            Finally, an important feature of the gradient relative coefficient update rule is that it provides a lower bound on the trust-region parameter \( \mu_k \) that does not depend on the optimality threshold \( \varepsilon \). \\
            Therefore, by adapting the argument from the second part of the proof of Theorem~\ref{thm: Upper bound successful iter BTR}, we obtain $\frac{1}{\mu_k}  \leq \frac{\mykappa{umh}}{\mykappa{lbd}}$, for $k \in \mathcal{S}$, $\gamma_3^{-1} \frac{1}{\mu_k} \leq \frac{1}{\mu_{k+1}} $, and when $k \in \mathcal{U}$, $\gamma_2^{-1} \frac{1}{\mu_k} \leq \frac{1}{\mu_{k+1}}$.
            Therefore, we get $\frac{1}{\mu_0} \gamma_2^{- \vert \mathcal{U}_{T_{\varepsilon} - 1} \vert} \gamma_3^{- \vert \mathcal{S}_{T_{\varepsilon} - 1} \vert} \leq \frac{\mykappa{umh}}{\mykappa{lbd}}$, and
            \begin{equation}
                \ln\left(\frac{1}{\gamma_2}\right)\vert \mathcal{U}_{T_{\varepsilon} - 1} \vert  \leq \ln\left(\frac{\mu_0 \, \mykappa{umh}}{\mykappa{lbd}} \right) + \ln\left(\gamma_3\right) \vert \mathcal{S}_{T_{\varepsilon} - 1} \vert
            \end{equation}
            and, therefore, we get the following upper bound on unsuccessful iterations, 
            \begin{equation}
                \vert \mathcal{U}_{T_{\varepsilon} - 1} \vert  \leq \dfrac{1}{\vert \ln\gamma_2 \vert} \ln\left(\frac{\mu_0 \,  \mykappa{umh}}{\mykappa{lbd}}  \right) + \dfrac{\ln \gamma_3}{\vert \ln \gamma_2 \vert }	\vert \mathcal{S}_{T_{\varepsilon} - 1} \vert \quad \in \calO\left( \varepsilon^{-2} \right)
            \end{equation}
            where we can see that the logarithmic term does not depend on \( \varepsilon^{-1} \).

    \section{Discussion on a recent iteration complexity analysis framework} \label{sec:iteration complexity framework}

        The complexity bound of  Theorem~\ref{thm: Upper bound successful iter BTR} uses C1--C3 and the  Cauchy decrease assumption directly. 
        A complementary route, developed  by~\citet{CurtSche20} and \citet{WangYuan2022}, replaces these  arguments with a single Lyapunov function that decreases on every  iteration. The argument is conceptually simpler when applicable, and yields tighter bounds in the convex and strongly convex cases.

        This template provides an alternative complexity argument for two of our admissible mechanisms. We apply it to the criticality-anchored update of Section~\ref{subsec:DFO-like update mechanism}. We generalise \citet{CurtSche20} in two ways: we use three distinct scaling coefficients $\gamma_1, \gamma_2, \gamma_3$ (with a corresponding adjustment to $\nu$), and we decouple step acceptance from the radius update to handle $\eta = 0$.
        We then specialise the stochastic analysis of \citet{WangYuan2022} to the deterministic gradient-scaled update of Section~\ref{subsec:Relative gradient}, deriving complexity bounds in the non-convex, convex, and strongly convex cases.
         
        \subsection{Unifying framework from Line-Search to Trust-Region Methods}

        Throughout this section, $T_\varepsilon$ denotes the first iteration satisfying the convergence criterion of interest, parametrised by $\varepsilon$.
		\begin{condition}
		The statements hold with respect to $\big\{ (\Phi_k , \alpha_k) \big\}$ and $T_{\varepsilon}$:
			\begin{enumerate}
				\item There exists a scalar $\underline{\alpha}_{\varepsilon} \in (0 ; \infty)$ such that, 
						 when  $ \gamma_2  \alpha_k \leq \underline{\alpha}_{\varepsilon}$ then the iteration is successful, and consequently we have for all $k \geq 0$, $\alpha_k \geq \underline{\alpha}_{\varepsilon}$.
				\item There exists a nondecreasing function $h_{\varepsilon} : (0; \infty) \rightarrow (0 ; \infty)$, and a scalar $\Theta > 0$ such that for all $k < T_{\varepsilon}$, 
				\begin{equation}
					\label{decrease in Phi}
					\Phi_k - \Phi_{k+1} \geq \Theta h_{\varepsilon}(\alpha_k).
				\end{equation}	   
                where $\{\Phi_k\}_{k \geq 0} \geq 0$ is a sequence whose role is to measure the progress of the algorithm.
			\end{enumerate}
            \label{condition complexity analysis deterministic}
		\end{condition}	

		\begin{theorem}
			Under condition \eqref{condition complexity analysis deterministic}, the stopping time $T_{\varepsilon}$ is finite and satisfies $\displaystyle T_{\varepsilon} \leq \dfrac{\Phi_0}{\Theta h_{\varepsilon}(\underline{\alpha}_{\varepsilon})}$.
            \label{thm: bound on stopping time deterministic}
		\end{theorem}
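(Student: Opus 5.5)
The argument is a telescoping sum. It uses only the two items of Condition~\ref{condition complexity analysis deterministic}, the nonnegativity of $\{\Phi_k\}$, and the monotonicity of $h_\varepsilon$.

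First I fix an arbitrary integer $K$ with $K \leq T_\varepsilon$. This avoids assuming $T_\varepsilon < \infty$ in advance, since finiteness is part of the claim. For every $k < K$ we have $k < T_\varepsilon$, so item~2 gives $\Phi_k - \Phi_{k+1} \geq \Theta h_\varepsilon(\alpha_k)$. Item~1 gives $\alpha_k \geq \underline{\alpha}_\varepsilon$ for every $k$. Since $h_\varepsilon$ is nondecreasing, this yields
\[
\Phi_k - \Phi_{k+1} \;\geq\; \Theta\, h_\varepsilon(\underline{\alpha}_\varepsilon) \;>\; 0 \qquad \text{for all } k < K .
\]

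Next I sum this over $k = 0, \dots, K-1$. The left-hand side telescopes to $\Phi_0 - \Phi_K$, so
\[
\Phi_0 \;\geq\; \Phi_0 - \Phi_K \;\geq\; K\, \Theta\, h_\varepsilon(\underline{\alpha}_\varepsilon),
\]
where the first inequality uses $\Phi_K \geq 0$. Hence $K \leq \Phi_0 / (\Theta h_\varepsilon(\underline{\alpha}_\varepsilon))$ for every integer $K \leq T_\varepsilon$. This quantity is finite because $h_\varepsilon$ takes values in $(0,\infty)$ and $\Theta > 0$.

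Finally I conclude. If $T_\varepsilon$ were infinite, every integer $K$ would satisfy $K \leq T_\varepsilon$, and $K \to \infty$ would contradict the bound just obtained. So $T_\varepsilon$ is finite. Taking $K = T_\varepsilon$ then gives the stated bound.

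The only point needing care is the lower bound $\alpha_k \geq \underline{\alpha}_\varepsilon$ for all $k$. The statement takes this as part of item~1, so I would invoke it directly. If one wanted to derive it instead, I would argue by induction. Any unsuccessful iteration must have $\gamma_2 \alpha_k > \underline{\alpha}_\varepsilon$. The decrease is by a factor of at least $\gamma_2$, so $\alpha_{k+1} \geq \gamma_2 \alpha_k > \underline{\alpha}_\varepsilon$. Successful iterations do not decrease $\alpha_k$, and the base case needs $\alpha_0 \geq \underline{\alpha}_\varepsilon$. Nothing else in the argument is delicate.
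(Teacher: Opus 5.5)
Your proof is correct and follows the paper's telescoping argument. The one difference is that you sum only up to an arbitrary integer $K \le T_\varepsilon$ and then deduce finiteness, whereas the paper telescopes directly to $\Phi_{T_\varepsilon}$ and so tacitly presupposes $T_\varepsilon < \infty$; your version is therefore slightly more rigorous. You are right to take $\alpha_k \ge \underline{\alpha}_\varepsilon$ directly from item~1, as the paper does, because your optional inductive derivation would fail for rules that contract into $[\gamma_1 \alpha_k, \gamma_2 \alpha_k)$ with $\gamma_1 < \gamma_2$, which only guarantee $\alpha_{k+1} \ge \gamma_1 \alpha_k$.
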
    		
		\begin{proof}
			For all $k < T_{\varepsilon}$, we have $\Phi_k - \Phi_{k+1} \geq \Theta h_{\varepsilon}(\alpha_k)$. Therefore, remembering $\Phi_k \geq 0$ for all $k$, 
			$$\Phi_0 \geq \Phi_0 - \Phi_{T_{\varepsilon}}  \geq \sum_{k = 0}^{T_{\varepsilon} - 1} \Phi_k - \Phi_{k+1} \geq  \sum_{k = 0}^{T_{\varepsilon} - 1} \Theta h_{\varepsilon}(\underline{\alpha}_{\varepsilon}) \quad \text{and} \quad T_{\varepsilon} \leq \dfrac{\Phi_0}{\Theta h_{\varepsilon}(\underline{\alpha}_{\varepsilon})}.$$
		\end{proof}
        When the per-iteration decrease degrades as $1/M_k$, Theorem~\ref{thm: bound on stopping time deterministic} extends as follows.
        \begin{corollary}
            When considering that $ \sum_k \dfrac{1}{M_k} = \infty$, if we assume that condition~\ref{condition complexity analysis deterministic} (2) is replaced by: \\
            (2') There exists a nondecreasing function $h_{\varepsilon} : (0; \infty) \rightarrow (0 ; \infty)$, and a scalar $\Theta > 0$ such that for all $k < T_{\varepsilon}$, 
                \begin{equation}
                    \label{decrease in Phi alternative}
                    \Phi_k - \Phi_{k+1} \geq \Theta \dfrac{h_{\varepsilon}(\alpha_k) }{M_k},   
                \end{equation}
            where $M_k$ is defined in \eqref{eqn: M_k definition}.
            Then, the stopping time $T_{\varepsilon}$ is finite and satisfies $\displaystyle T_{\varepsilon} \leq \dfrac{M_{T_{\varepsilon}} \, \Phi_0}{\Theta h_{\varepsilon}(\underline{\alpha}_{\varepsilon})}$.
        \end{corollary}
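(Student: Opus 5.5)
The argument mirrors the telescoping proof of Theorem~\ref{thm: bound on stopping time deterministic}, with the weight $1/M_k$ controlled by monotonicity of $\{M_k\}$.

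\emph{Step 1: lower bound on the per-iteration decrease.} By item (1) of Condition~\ref{condition complexity analysis deterministic}, we have $\alpha_k \geq \underline{\alpha}_{\varepsilon}$ for all $k$. Since $h_{\varepsilon}$ is nondecreasing, this gives $h_{\varepsilon}(\alpha_k) \geq h_{\varepsilon}(\underline{\alpha}_{\varepsilon})$. By \eqref{eqn: M_k definition}, $M_k = L + \max_{0\le i\le k}\|H_i\|$ is nondecreasing in $k$. Hence, for every $k < T_{\varepsilon}$ (assuming for now that $T_\varepsilon$ is finite), $1/M_k \geq 1/M_{T_{\varepsilon}}$. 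Combining these with (2'), we obtain for all $k<T_\varepsilon$
\[
\Phi_k - \Phi_{k+1} \;\geq\; \Theta\,\frac{h_{\varepsilon}(\underline{\alpha}_{\varepsilon})}{M_{T_{\varepsilon}}}.
\]

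\emph{Step 2: telescoping.} Summing from $k=0$ to $T_{\varepsilon}-1$ and using $\Phi_{T_\varepsilon}\ge 0$ gives
\[
\Phi_0 \;\geq\; \Phi_0-\Phi_{T_\varepsilon} \;\geq\; T_{\varepsilon}\,\Theta\, h_{\varepsilon}(\underline{\alpha}_{\varepsilon})/M_{T_{\varepsilon}}.
\]
Rearranging yields the stated bound $T_{\varepsilon} \leq M_{T_\varepsilon}\Phi_0/(\Theta h_\varepsilon(\underline{\alpha}_\varepsilon))$.

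\emph{Step 3: finiteness of $T_{\varepsilon}$.} This is the main obstacle, since the bound in Step~2 is implicit through $M_{T_\varepsilon}$ and is meaningless if $T_\varepsilon=\infty$; this is where the hypothesis $\sum_k 1/M_k=\infty$ is used. I would argue by contradiction. Suppose $T_{\varepsilon}=\infty$. Then (2') holds for every $k$, and summing up to an arbitrary $N$ gives
\[
\Phi_0 \;\geq\; \Theta\, h_{\varepsilon}(\underline{\alpha}_{\varepsilon}) \sum_{k=0}^{N-1} \frac{1}{M_k}.
\]
As $N\to\infty$ the right-hand side diverges, because $h_\varepsilon(\underline{\alpha}_\varepsilon)>0$ and $\Theta>0$, while $\Phi_0<\infty$. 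This is a contradiction, so $T_\varepsilon<\infty$, which justifies Steps~1--2. A cleaner explicit statement is that $T_\varepsilon$ is at most the first $N$ with $\sum_{k<N}1/M_k > \Phi_0/(\Theta h_\varepsilon(\underline{\alpha}_\varepsilon))$. For instance, under AM.\ref{as: growth bound model Hessian with iteration} this yields a bound of order $\exp\bigl(\mykappa{umh}'\Phi_0/(\Theta h_\varepsilon(\underline{\alpha}_\varepsilon))\bigr)$.
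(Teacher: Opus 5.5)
Your proof is correct and follows essentially the same route as the paper. The implicit bound comes from telescoping, using $h_{\varepsilon}(\alpha_k)\ge h_{\varepsilon}(\underline{\alpha}_{\varepsilon})$ and the monotonicity of $\{M_k\}$; finiteness comes from the divergence of $\sum_k 1/M_k$, and your threshold $N$ equals the paper's $N^+$ plus one. Your ordering is in fact cleaner than the paper's: you establish $T_{\varepsilon}<\infty$ first by applying (2') with $1/M_k$ directly, and only then use $1/M_k\ge 1/M_{T_{\varepsilon}}$. The paper's chain instead compares $\sum_{k<N}1/M_{T_{\varepsilon}}$ with $\sum_{k<N}1/M_k$ in the wrong direction, and it invokes $M_{T_{\varepsilon}}$ before finiteness is known.
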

        \begin{proof}
            The proof follows the same lines as the proof of Theorem~\ref{thm: bound on stopping time deterministic}, by considering that for all $k < T_{\varepsilon}$, \\
            $\Phi_k - \Phi_{k+1} \geq \Theta \dfrac{h_{\varepsilon}(\alpha_k) }{M_k} \geq \Theta \dfrac{h_{\varepsilon}(\underline{\alpha}_{\varepsilon}) }{M_{T_{\varepsilon}}}$. Then, for $N \in \mathbb{N}$, we have
            $$
                \Phi_0 \geq \Phi_0 - \Phi_{N} \geq \sum_{k = 0}^{N - 1} \Phi_k - \Phi_{k+1} \geq  \sum_{k = 0}^{N - 1} \Theta \dfrac{h_{\varepsilon}(\underline{\alpha}_{\varepsilon}) }{M_{T_{\varepsilon}}} \geq \Theta h_{\varepsilon}(\underline{\alpha}_{\varepsilon}) \sum_{k = 0}^{N - 1} \dfrac{1}{M_{k}}.
            $$
            Therefore, considering $N^+$ to be the largest integer such that $ \sum_{k = 0}^{N^+ - 1} \dfrac{1}{M_{k}} \leq \dfrac{\Phi_0}{\Theta h_{\varepsilon}(\underline{\alpha}_{\varepsilon})}$, we have $T_{\varepsilon} \leq N^+ < \infty$.
        \end{proof}

        \subsection{Application to DFO-Like update mechanism}

        We show that the criticality-anchored update of Section~\ref{subsec:DFO-like update mechanism} satisfies Condition~\ref{condition complexity analysis deterministic} with the progress metric of \citet{CurtSche20}, generalised to three scaling coefficients. For $\nu \in (0, 1)$, define
        \begin{equation}
            \Phi_k  = \nu (f(x_k) - \kappa_{lbf}) + (1 - \nu)\Delta_k^2  \quad \text{ and } \quad   T_{\varepsilon} = \inf \big\{ k \geq 0, \; \Vert g(x_k) \Vert < \varepsilon \big\}.
            \label{eqn: progress metric}
        \end{equation}
        In the following, we will show that the conditions of Theorem~\ref{thm: bound on stopping time deterministic} are satisfied for the update mechanisms presented in subsection~\ref{subsec:DFO-like update mechanism}. We adapt the work from \citet{CurtSche20} to our setting where we consider multiple scaling coefficients $\gamma_{1,2,3}$.
        \begin{proposition}
            There exists $\nu \in (0, 1)$ such that
            \begin{equation}
                \nu\, \eta_1\, \frac{\mykappa{mdc}}{2\zeta}\, 
                \min\!\left(\frac{1}{\zeta\, \mykappa{umh}}, 1\right)
                \;\geq\;
                (1-\nu)\big(\gamma_3^2 - \gamma_2^2\big).
                \label{eq:nu choice DFO}
            \end{equation}
            For any such $\nu$, Condition~\ref{condition complexity analysis deterministic} holds with $\alpha_k = \Delta_k$, $h_\varepsilon(x) = x^2$, and $\Theta = (1-\nu)(1-\gamma_2^2)$. Consequently,
            \[
                T_\varepsilon \;\leq\; 
                \frac{\nu\big(f(x_0) - \mykappa{lbf}\big) + (1-\nu)\Delta_0^2}
                    {(1-\nu)(1-\gamma_2^2)\, \underline{\Delta}_\varepsilon^2},
                \quad \text{where } 
                \underline{\Delta}_\varepsilon = \frac{\mykappa{lbd}}{\mykappa{umh}}\, \varepsilon.
            \]
            \label{prop: decrease in Phi DFO-like}
        \end{proposition}
       \begin{proof}
        For $k < T_\varepsilon$, we have $\|g_k\| \geq \varepsilon$. We show  $\Phi_k - \Phi_{k+1} \geq \Theta\, \Delta_k^2$ in two cases. \\
        \textbf{Case 1: $\rho_k \geq \eta_1$.} The Cauchy decrease (AA.\ref{as: Cauchy decrease}) gives
                \[
                    f(x_k)-f(x_{k+1})
                    \;\ge\;
                    \eta_1 \frac{\mykappa{mdc}}{2}\,\|g_k\|
                    \min\!\left\{\Delta_k,\; \frac{\|g_k\|}{\mykappa{umh}}\right\}.
                \]
                \emph{Subcase 1a: $\Delta_k \leq \zeta \|g_k\|$.} Then $f(x_k) - f(x_{k+1}) \geq \eta_1\, c\, \Delta_k^2$ with $c = \frac{\mykappa{mdc}}{2\zeta}\, \min\!\big(1, \frac{1}{\zeta\, \mykappa{umh}}\big)$. Since $\Delta_{k+1} \leq \gamma_3 \Delta_k$,
                \[
                    \Phi_k - \Phi_{k+1} 
                    \;\geq\; \big(\nu\, \eta_1\, c - (1-\nu)(\gamma_3^2 - 1)\big)\, \Delta_k^2 
                    \;\geq\; (1-\nu)(1-\gamma_2^2)\, \Delta_k^2,
                \]
                where the last inequality uses \eqref{eq:nu choice DFO}. \\
                \emph{Subcase 1b: $\Delta_k > \zeta \|g_k\|$.} Here $\Delta_{k+1} \leq \gamma_2 \Delta_k$, and by monotonic decrease of $f(x_k)$, we have
                \[
                    \Phi_k - \Phi_{k+1} = \nu \left( f(x_k) - f(x_{k+1}) \right) + (1 - \nu) \left( \Delta_k^2 - \Delta_{k+1}^2 \right) \geq (1 - \nu) \big( 1 - \gamma_2^{2} \big) \Delta_k^2.   
                \]                
            \textbf{Case 2: $\rho_k < \eta_1$.}  Again, $\Delta_{k+1} \leq \gamma_2 \Delta_k$ and argument is similar to Subcase 1b. \\
            In all cases, $\Phi_k - \Phi_{k+1} \geq \Theta\, \Delta_k^2$ with $\Theta = (1-\nu)(1-\gamma_2^2)$. The complexity bound follows from Theorem~\ref{thm: bound on stopping time deterministic}.
       \end{proof}

        \subsection{Application to the gradient-scaled update}

        We turn to the gradient-scaled update of Section~\ref{subsec:Relative gradient}, where $\Delta_k = \mu_k \|g_k\|$ where $\mu_k \leq \overline{\mu}$ with appropriate adjustments to the update rule.
        By Remark~\ref{remark: eta_dec and eta_inc relative gradient}, the acceptance threshold satisfies $\eta_1 > \eta$ ($\eta_1 \neq 0$), with the strong condition $\gamma_2 (L \overline{\mu} + 1) < 1$. This condition could be relaxed by considering $\eta > 0$. \\
        The analysis below specialises \citet{WangYuan2022} to the deterministic case and treats the non-convex, convex, and strongly convex regimes in turn. A common condition on $\nu$ underlies all three.
        \begin{condition}
            We choose $\nu \in (0 \, ; \, 1)$ sufficiently close to $1$ such that
            \[ 
                \nu \, \eta_1 \,  \frac{\mykappa{mdc}}{2 \, \overline{\mu}^2 \, \mykappa{umh}} \geq (1 - \nu) \left(L \overline{\mu} + 1 \right)^2 \left[ \gamma_3^2 - \gamma_2^2  \right].
            \]
            \label{cond: nu choice relative gradient}
        \end{condition}
        
        \subsubsection{Non-convex case}
        
        \begin{proposition}
            Consider $\Phi_k$ and $T_{\varepsilon}$ defined in \eqref{eqn: progress metric}.
            Under Condition~\ref{cond: nu choice relative gradient}, for all $k < T_\varepsilon$,
            \[
                \Phi_k - \Phi_{k+1} \;\geq\; \Theta\, \Delta_k^2,
                \quad \text{where } 
                \Theta = (1-\nu)\Big(1 - (L\overline{\mu}+1)^2\, \gamma_2^2\Big) > 0.
            \]
            Hence Condition~\ref{condition complexity analysis deterministic} holds with $\alpha_k = \Delta_k$, $h_\varepsilon(x) = x^2$, and
            \begin{equation}
                T_{\varepsilon}  \leq \dfrac{\Phi_0}{\Theta \, h_{\varepsilon}(\underline{\Delta}_{\varepsilon})} \leq  \dfrac{ \nu (f(x_0) - \kappa_{lbf}) + (1 - \nu)\Delta_0^2}{(1 - \nu) \big( 1 - \gamma_2^{2} \big) \, h_{\varepsilon}(\underline{\Delta}_{\varepsilon})} \; \in \calO\left( \varepsilon^{-2} \right)
            \end{equation}
        \end{proposition}
        \begin{proof}
            Recall that by Proposition~\ref{prop: evolution gradient relative coefficient}, $\norm{g_{k+1}} \leq \left(L \overline{\mu} + 1 \right)  \| g_k \|$. \\
            \textbf{Case 1: $\rho_k \geq \eta_1$.} The Cauchy decrease gives
            \[
                f(x_k) - f(x_{k+1}) \geq  \eta_1 \frac{\mykappa{mdc}}{2} \frac{\Delta_k}{\overline{\mu}} \, \min \left(\Delta_k \, ; \, \frac{\Delta_k}{\overline{\mu} \, \mykappa{umh}}\right) \geq \eta_1 \, c \, \Delta_k^2, \quad \text{ where } c = \frac{\mykappa{mdc}}{2 \, \overline{\mu}} \, \min \left(1 \, ; \, \frac{1}{\overline{\mu} \, \mykappa{umh}}\right) =  \frac{\mykappa{mdc}}{2 \, \overline{\mu}^2 \, \mykappa{umh}}.
            \]
            Since $\Delta_{k+1} \leq \gamma_3(L\overline{\mu} + 1)\Delta_k$,
            \[
                \Phi_k - \Phi_{k+1} 
                \;\geq\; \Big(\nu\, \eta_1\, c - (1-\nu)\big[\gamma_3^2(L\overline{\mu}+1)^2 - 1\big]\Big)\Delta_k^2 
                \;\geq\; (1-\nu)\Big(1 - (L\overline{\mu}+1)^2\, \gamma_2^2\Big) \Delta_k^2.
            \]
            Condition~\ref{cond: nu choice relative gradient} ensures the last inequality. \\
            \textbf{Case 2: $\rho_k < \eta_1$.} Then $\Delta_{k+1} = \mu_{k+1}\|g_{k+1}\| \leq \gamma_2(L\overline{\mu}+1)\Delta_k$, hence
            \[
                \Phi_k - \Phi_{k+1} 
                = (1-\nu)(\Delta_k^2 - \Delta_{k+1}^2) 
                \geq \Theta\, \Delta_k^2.
            \]
            Therefore, in both cases, $\Phi_k - \Phi_{k+1} \geq \Theta \Delta_k^2$, where $\Theta = (1 - \nu) \left( 1 - \gamma_2^{2} \right) > 0$, and condition \ref{condition complexity analysis deterministic} is satisfied with $h_{\varepsilon}(x) = x^2$, and $\alpha_k = \Delta_k$. The rest follows from Theorem~\ref{thm: bound on stopping time deterministic}.
        \end{proof}

        \begin{remark}
            If we consider $\eta > 0$, the case distinction can be made on $\rho_k < \eta$ and $\rho_k \geq \eta$, and the same complexity result is obtained with a better constant $\Theta = (1 - \nu) \left( 1 - \gamma_2^{2} \right)$ and no restriction on $\gamma_2$.
            For this, we choose $\nu$ such that 
            \[ \nu \, \eta \,  \frac{\mykappa{mdc}}{2 \, \overline{\mu}^2 \, \mykappa{umh}} \geq (1 - \nu)  \left[ \left(L \overline{\mu} + 1 \right)^2\gamma_3^2 - \gamma_2^2  \right].
            \]
            Then, for $\rho_k < \eta$, we have $x_{k+1} = x_k$, and
            $\Delta_{k+1} = \mu_{k+1} \| g_{k+1} \| \leq \gamma_2 \, \mu_k \, \| g_k \| = \gamma_2 \, \Delta_k$, and therefore, $\Phi_k - \Phi_{k+1} = (1 - \nu) \left( \Delta_k^2 - \Delta_{k+1}^2 \right) \geq (1 - \nu) ( 1 -  \gamma_2^{2} )\Delta_k^2$.
        \end{remark}
        
    \subsubsection{Convex cases}

        Assume $f$ is convex with bounded level set $\mathcal{L}_0 = \{x : f(x) \leq f(x_0)\}$ of diameter $D_0 = \max_{x, y \in \mathcal{L}_0} \|x - y\|$ and $G = \max_{x \in \mathcal{L}_0} \|\nabla f(x)\|$. 
        Following \citet{WangYuan2022}, the stopping time is
        \begin{equation}
            T_\varepsilon^f = \inf\big\{k \geq 0 : f(x_k) - f^\star < \varepsilon\big\},
            \label{eqn: stopping time convex case}
        \end{equation}
        where $f^\star = \min_x f(x)$. The progress metric is defined as
        \begin{equation}
            \Psi_k = \frac{1}{\nu\, \varepsilon} - \frac{1}{\Phi_k},
            \quad \Phi_k = \nu(f(x_k) - f^\star) + (1-\nu)\mu_k\|\nabla f(x_k)\|^2, \quad \nu \in (0\, ; \, 1).
            \label{eqn: progress metric convex case}
        \end{equation}
        Convexity and Cauchy-Schwarz inequality give the following upper bound on the optimality gap. For all $x \in \calL_0$,
        \begin{equation}
        f(x) - f^\star \leq \nabla f(x)^\top (x - x^\star) \leq \| \nabla f(x) \| \, \| x - x^\star \| \leq D_0 \, \| \nabla f(x) \|.
        \label{eq:convexity upper bound}
        \end{equation}
        \begin{proposition}
            Consider $\Psi_k$ and $T_{\varepsilon}^f$ respectively defined in \eqref{eqn: progress metric convex case}  and \eqref{eqn: stopping time convex case}. 
            Under Condition~\ref{cond: nu choice relative gradient}, for all $k < T_\varepsilon^f$,
            \[
                \Psi_k - \Psi_{k+1} \;\geq\; \frac{\Theta}{\omega^2}\, \mu_k,
                \quad \omega = \nu D_0 + (1-\nu)\overline{\mu}\, G,
                \quad \Theta = (1-\nu)\big(1 - (L\overline{\mu}+1)^2\, \gamma_2^2\big).
            \]
            Condition~\ref{condition complexity analysis deterministic} holds with $\alpha_k = \mu_k$,  $h_\varepsilon(x) = x$, and
            \[
                T_{\varepsilon}^f \leq \dfrac{\Psi_0 \, \omega^2 \, \mykappa{umh} }{(1 - \nu)  \left( 1 - \left(L \overline{\mu} + 1 \right)^2 \gamma_2^2 \right) \mykappa{lbd}}  \; \in \calO\left( \varepsilon^{-1} \right).
            \]
        \end{proposition}

% --- Shorter (less rigorous) proof commented out: it invokes the non-convex case
% --- analysis, whose progress metric $\Phi_k$ differs from the convex one used here.
% \begin{proof}
% The case analysis of the non-convex proposition gives
% $\Phi_k - \Phi_{k+1} \geq \Theta\, \mu_k\, \|\nabla f(x_k)\|^2$.
% Combining with \eqref{eq:convexity upper bound},
% \[
%     \Phi_k \;\leq\; \omega\, \|\nabla f(x_k)\|,
%     \quad \text{hence} \quad
%     \Phi_k - \Phi_{k+1} \;\geq\; \frac{\Theta}{\omega^2}\, \mu_k\, \Phi_k^2.
% \]
% Dividing by $\Phi_k\, \Phi_{k+1}$ yields
% \[
%     \Psi_k - \Psi_{k+1}
%     = \frac{1}{\Phi_{k+1}} - \frac{1}{\Phi_k}
%     \geq \frac{\Theta}{\omega^2}\, \mu_k.
% \]
% The lower bound
% $\mu_k \geq \underline{\mu} = \mykappa{lbd} / \mykappa{umh}$ from
% Proposition~\ref{prop: mu_k lower bound relative gradient} closes
% the argument.
% \end{proof}
        \begin{proof}
            Recall that by Proposition~\ref{prop: evolution gradient relative coefficient}, $\norm{g_{k+1}} \leq \left(L \overline{\mu} + 1 \right)  \| g_k \|$. \\
            \textbf{Case 1: $\rho_k \geq \eta_1$.} By the Cauchy decrease (AA.\ref{as: Cauchy decrease}), $\mu_{k+1} \leq \gamma_3 \, \mu_k$,  we have
            \begin{align*}
                \Phi_k - \Phi_{k+1} & = \nu (f(x_k) - f(x_{k+1})) + (1 - \nu) \left( \mu_k \| \nabla f(x_k) \|^2 - \mu_{k+1} \| \nabla f(x_{k+1}) \|^2 \right) \\
                    & \geq \nu \, \eta \,  \frac{\mykappa{mdc}}{2 \, \overline{\mu}^2 \, \mykappa{umh}} \, \mu_k \| \nabla f(x_k) \|^2  + (1 - \nu) \left( 1 - \gamma_3^2\left(L \overline{\mu} + 1 \right)^2 \right)  \,  \mu_k \| \nabla f(x_k) \|^2 \\
                    & \geq (1 - \nu) \left( 1 - \left(L \overline{\mu} + 1 \right)^2 \gamma_2^{2} \right) \mu_k \| \nabla f(x_k) \|^2
            \end{align*}
            where the last inequality uses Condition~\ref{cond: nu choice relative gradient}. \\
            \textbf{Case 2: $\rho_k < \eta_1$.} Then, $
                        \Phi_k - \Phi_{k+1}  \geq (1 - \nu)  \left( 1 - \left(L \overline{\mu} + 1 \right)^2 \gamma_2^2 \right) \mu_{k} \| \nabla f(x_k) \|^2$. \\ 
            Combining with \eqref{eq:convexity upper bound}, with $\omega = \nu D_0 + (1 - \nu) \overline{\mu} G$,
                \begin{equation}
                    \Phi_k \leq \nu D_0 \, \| \nabla f(x_k) \| + (1 - \nu) \overline{\mu} \, \| \nabla f(x_k) \|^2 \leq \omega \, \| \nabla f(x_k) \| 
                    \quad \text{hence} \quad
                    \Phi_k - \Phi_{k+1} \;\geq\; \frac{\Theta}{\omega^2}\, \mu_k\, \Phi_k^2.
                    \label{eqn: upper bound Phi_k convex case}
                \end{equation}
                Dividing by $\Phi_k\, \Phi_{k+1}$ yields for all $k < T_\varepsilon^f$,
                \[
                    \Psi_k - \Psi_{k+1} 
                    = \frac{1}{\Phi_{k+1}} - \frac{1}{\Phi_k} 
                    \geq \frac{\Theta}{\omega^2}\, \mu_k.
                \]
                The lower bound $\mu_k \geq \underline{\mu} = \mykappa{lbd} / \mykappa{umh}$ from Proposition~\ref{prop: mu_k lower bound relative gradient} closes the argument.
            \end{proof}

        \subsubsection{Strongly convex case}

            Assume $f$ is $\sigma$-strongly convex ($\sigma>0$), and let $x^\star$ be the global minimiser with $f^\star=\min_x f(x) = f(x^\star)$.
            Define, 
            \[
                \Psi_k = \ln\left( \Phi_k \right) - \ln\left(\nu \,  \varepsilon \right), \quad  \Phi_k = \nu (f(x_k) - f^\star) + (1 - \nu) \mu_k \| \nabla f(x_k) \|^2 \quad \nu \in (0\, ; \, 1).
            \]
            \begin{proposition}
                Consider $\Psi_k$ and $T_{\varepsilon}^f$ defined above.
                Under Condition~\ref{cond: nu choice relative gradient}, for all 
                $k < T_\varepsilon^f$,
                \[
                    \Psi_k - \Psi_{k+1} \;\geq\; \Theta\, \frac{\mu_k}{\overline{\mu}},
                    \quad \Theta = \frac{2\sigma(1-\nu)\big(1 - (L\overline{\mu}+1)^2 \gamma_2^2\big)}
                                    {\nu/\overline{\mu} + 2\sigma(1-\nu)} \in (0, 1).
                \]
                Condition~\ref{condition complexity analysis deterministic} holds with $\alpha_k = \mu_k$, 
                $h_\varepsilon(x) = x/\overline{\mu}$, and 
                \[
                    T_{\varepsilon}^f \leq \dfrac{\Psi_0}{\Theta \, h_{\varepsilon}(\underline{\mu})} = \dfrac{\mykappa{umh} \, \overline{\mu} \, \Big(\ln\left( \Phi_0 \right) - \ln\left(\nu \,  \varepsilon \right)\Big) }{\mykappa{lbd} \, \Theta }  \; \in \calO\left( \ln\left( \varepsilon^{-1} \right) \right).
                \]
            \end{proposition}
            \begin{proof}
                The standard inequality  $\|\nabla f(x)\|^2 \;\ge\; 2\sigma\bigl(f(x)-f^\star\bigr)$ gives for all $k < T_{\varepsilon}^f$, 
                \[
                    \Phi_k \;\le\; \left[ \dfrac{\nu}{2 \, \sigma} + (1 - \nu) \, \overline{\mu} \right] \|\nabla f(x_k)\|^2.
                \]
                Again, with a similar analysis as in the convex case, we have that,
                \[
                    \Phi_k - \Phi_{k+1} \geq (1 - \nu)  \left( 1 - \left(L \overline{\mu} + 1 \right)^2 \gamma_2^2 \right) \mu_{k} \| \nabla f(x_k) \|^2 \geq \dfrac{ 2 \sigma \, (1 - \nu)  \left( 1 - \left(L \overline{\mu} + 1 \right)^2 \gamma_2^2 \right) }{\nu / \overline{\mu}  + 2 \sigma \, (1 - \nu) } \, \dfrac{\mu_k}{\overline{\mu}} \, \Phi_k.
                \]
                Therefore, we see $\Phi_{k+1} \leq \left[ 1 - \Theta \, \dfrac{\mu_k}{\overline{\mu}}\right] \, \Phi_k$ where $\Theta  < 1$ and $\dfrac{\mu_k}{\overline{\mu}} \leq 1$, which shows a linear convergence rate. \\
                Using $-\ln (1 - x) \geq x$ for $x \in (0\, ; \, 1)$, we have $\Psi_k - \Psi_{k+1} = \ln\left( \Phi_k \right) - \ln\left( \Phi_{k+1} \right) \geq - \ln\left( 1 - \Theta \, \dfrac{\mu_k}{\overline{\mu}} \right) \geq \Theta \, h_{\varepsilon}(\mu_k)$. \\
                Proposition~\ref{prop: mu_k lower bound relative gradient} provides the lower bound on $\mu_k$, and Theorem~\ref{thm: bound on stopping time deterministic} gets us the desired result.
            \end{proof}

\section{Conclusion} \label{sec:conclusion}

        We surveyed trust-region radius update mechanisms for unconstrained optimisation and isolated three structural conditions on the radius update rule: a lower bound relative to the gradient norm (Condition~\ref{cond: delta_k lower bound}), a contraction on unsuccessful iterations (Condition~\ref{cond: decrease of the trust-region radius}), and a controlled expansion on successful ones (Condition~\ref{cond: increase of the trust-region radius}). Algorithm \ref{alg:generic BTR} provides a generic framework that holds independently of the specific update rule. Figure~\ref{fig: conditions on radius update} illustrates the relation between the conditions and the convergence properties of the method. Under uniformly bounded model Hessians, weak admissibility yields $\lim_{k\to\infty}\norm{g_k} = 0$ and strong admissibility yields the optimal worst-case complexity $\calO(\varepsilon^{-2})$. Strong admissibility extends the convergence guarantee to linearly growing model Hessians.

        We verified admissibility for five mechanism classes from the literature: fixed-factor, step-driven, retrospective, criticality-anchored, and gradient-scaled. We extended the retrospective analysis of~\citet{BastMalmMoufToinToma10} to linearly growing model Hessians, and adapted the Lyapunov framework of~\citet{CurtSche20, WangYuan2022} to the deterministic gradient-scaled update. The constants involved in the complexity bound depend on the specific mechanism and may significantly affect practical performance.

        %\noindent\textbf{Limits and open questions.}
        The proposed framework characterises first-order convergence and worst-case complexity. It does not address the asymptotic behaviour of $\Delta_k$, the local convergence rate near a minimiser, or second-order stationarity. Whether these properties admit a similar condition-based characterisation is open.
%
%        \noindent\textbf{Connection to Part~II.}
		We take up these questions in a companion paper (Part~II), where we analyse the asymptotic dynamics of $\Delta_k$, distinguishing the regime where the radius vanishes ($\sum_k \Delta_k < +\infty$ or $\sum_k \Delta_k^2 < +\infty$) from the regime where it stays bounded away from zero. We then study the local convergence behaviour, including the case where the trust-region constraint becomes inactive after finitely many iterations, allowing Newton or quasi-Newton steps and superlinear local rates.

		%\noindent\textbf{Future work.}
		Beyond Part~II, two main directions stand out. The first is systematic numerical benchmarking of the five mechanism classes on standard test sets, to quantify the practical impact of the constants and to identify problem-dependent recommendations.
        The second is the extension to stochastic and noisy settings, where the radius can serve as a precision parameter for sampling, following the works from \cite{BandScheVice14,BlanCartMeniSche19,PaquSche2020, CartSche18,JinScheinbergXie2025, JinScheinbergXie2024, ChenMeniSche18, CurtSche20}.
    
\begin{figure}[ht!]
        \centering
    \begin{tikzpicture}[
        node distance=24mm and 28mm,
        box/.style={draw, very thick, minimum width=4.8cm, minimum height=1.2cm},
        arr/.style={->, very thick}
        ]
        \usetikzlibrary{positioning}

        \node[box] (A) {
            \begin{minipage}{4.5cm}
                \small
                \begin{enumerate}[(i)]
                    \item \textbf{AM.\ref{as: model hessian}: }  
                    $ \norm{H_k} \leq \mykappa{umh} $ 
                    \item \textbf{Condition \ref{cond: decrease of the trust-region radius}}: \\ 
                    For some $\myeta{dec} \in (0\, ; \, 1)$, \\
                    $\Delta_{k+1} \leq \gamma_2 \Delta_{k} $ if $\rho_k < \myeta{dec}$
                \end{enumerate}
            \end{minipage}        
        };
        \node[box, right=of A] (B) { 
            \begin{minipage}{4.5cm}
                \small
                \begin{enumerate}[(i)]
                    \item \textbf{AM.\ref{as: Linear growth bound model Hessian}: } 
                    $ \norm{H_k} \leq M_k $  \\ with $\sum_k \frac{1}{M_k} = +\infty$
                    \item \textbf{Condition \ref{cond: increase of the trust-region radius}}: \\
                    For some $\myeta{inc} \in (0\, ; \, 1)$, \\
                    $\Delta_{k+1} \leq \gamma_2 \Delta_{k} $ if $\rho_k < \myeta{inc}$ \\
                    $\Delta_{k+1} \leq \gamma_3 \Delta_{k} $ otherwise
                \end{enumerate}
            \end{minipage} };

        \node[box, below=of $(A)!0.5!(B)$] (C) {
            \begin{minipage}{4.5cm}
                \centering
                $\liminf_{k \to +\infty} \| g_k \| = 0$
            \end{minipage}  };

        \node[box, below=of C] (D) {
            \begin{minipage}{4.5cm}
                \centering
                $\lim_{k \to +\infty} \| g_k \| = 0$
            \end{minipage}
        };
        % New top row
        \node[box, above=of $(A)!0.5!(B)$] (Aup) {
            \begin{minipage}{4.5cm}
                \centering
                \textbf{Condition \eqref{cond: delta_k lower bound}} \\[6pt]
                $\displaystyle \Delta_k \geq \frac{\mykappa{lbd}}{M_k} \min_{0 \leq i \leq k} \| g_i \|$ \\[4pt] 
                $M_k = L + \max_{0 \leq i \leq k} \| H_i \|$.
            \end{minipage}
        };

        \draw[arr] (Aup.south west) -- (A.north);
        \draw[arr] (Aup.south east) -- (B.north);

        \draw[arr] (A.south) -- (C.north west);
        \draw[arr] (B.south) -- (C.north east);

        \draw[arr] (C.south) -- node[right, midway]{\small Generic BTR (Algo \ref{alg:generic BTR}) + AM.\ref{as: model hessian} + C.2} (D.north);

        \end{tikzpicture}
        \caption{Influence diagram for first-order convergence of the generic trust-region method (Algorithm~\ref{alg:generic BTR}). Condition~\ref{cond: delta_k lower bound} lower-bounds the radius by the criticality measure; Condition~\ref{cond: decrease of the trust-region radius} contracts on unsuccessful iterations; Condition~\ref{cond: increase of the trust-region radius} caps expansion on successful ones. Under the uniform Hessian bound AM.\ref{as: model hessian}, weak admissibility (C.1$+$C.2) yields $\lim_{k\to\infty}\|g_k\|=0$. Under the weaker linear-growth assumption AM.\ref{as: Linear growth bound model Hessian}, strong admissibility (C.1$+$C.3) yields $\liminf_{k\to\infty}\|g_k\|=0$ and the optimal complexity $\mathcal{O}(\varepsilon^{-2})$.}

        \label{fig: conditions on radius update}
    \end{figure}

\bibliography{References}         

\begin{thebibliography}{65}
\providecommand{\natexlab}[1]{#1}
\providecommand{\url}[1]{\texttt{#1}}
\expandafter\ifx\csname urlstyle\endcsname\relax
  \providecommand{\doi}[1]{doi: #1}\else
  \providecommand{\doi}{doi: \begingroup \urlstyle{rm}\Url}\fi

\bibitem[Bandeira et~al.(2014)Bandeira, Scheinberg, and
  Vicente]{BandScheVice14}
A.~S. Bandeira, K.~Scheinberg, and L.~N. Vicente.
\newblock Convergence of {Trust}-{Region} {Methods} {Based} on {Probabilistic}
  {Models}.
\newblock \emph{{SIAM} Journal on Optimization}, 24\penalty0 (3):\penalty0
  1238--1264, 2014.
\newblock \doi{10.1137/130915984}.

\bibitem[Bastin et~al.(2010)Bastin, Malmedy, Mouffe, Toint, and
  Tomanos]{BastMalmMoufToinToma10}
F.~Bastin, V.~Malmedy, M.~Mouffe, P.~L. Toint, and D.~Tomanos.
\newblock A retrospective trust-region method for unconstrained optimization.
\newblock \emph{Mathematical Programming}, 123\penalty0 (2):\penalty0 395--418,
  2010.
\newblock \doi{10.1007/s10107-008-0258-1}.

\bibitem[Birgin et~al.(2017)Birgin, Gardenghi, Martínez, Santos, and
  Toint]{BirgGardMartSantToin17}
E.~G. Birgin, J.~L. Gardenghi, J.~M. Martínez, S.~A. Santos, and P.~L. Toint.
\newblock Worst-case evaluation complexity for unconstrained nonlinear
  optimization using high-order regularized models.
\newblock \emph{Mathematical Programming}, 163\penalty0 (1):\penalty0 359--368,
  2017.
\newblock \doi{10.1007/s10107-016-1065-8}.

\bibitem[Blanchet et~al.(2019)Blanchet, Cartis, Menickelly, and
  Scheinberg]{BlanCartMeniSche19}
J.~Blanchet, C.~Cartis, M.~Menickelly, and K.~Scheinberg.
\newblock Convergence {Rate} {Analysis} of a {Stochastic} {Trust} {Region}
  {Method} via {Submartingales}.
\newblock \emph{{INFORMS} Journal on Optimization}, 1\penalty0 (2):\penalty0
  92--119, 2019.
\newblock \doi{10.1287/ijoo.2019.0016}.

\bibitem[Bollapragada et~al.(2018{\natexlab{a}})Bollapragada, Byrd, and
  Nocedal]{BollByrdNoce18SIAM}
R.~Bollapragada, R.~Byrd, and J.~Nocedal.
\newblock Adaptive sampling strategies for stochastic optimization.
\newblock \emph{SIAM Journal on Optimization}, 28\penalty0 (4):\penalty0
  3312--3343, 2018{\natexlab{a}}.
\newblock \doi{10.1137/17m1154679}.

\bibitem[Bollapragada et~al.(2018{\natexlab{b}})Bollapragada, Byrd, and
  Nocedal]{BollByrdNoce18IMA}
R.~Bollapragada, R.~H. Byrd, and J.~Nocedal.
\newblock Exact and inexact subsampled {N}ewton methods for optimization.
\newblock \emph{IMA Journal of Numerical Analysis}, 39\penalty0 (2):\penalty0
  545--578, 2018{\natexlab{b}}.
\newblock \doi{10.1093/imanum/dry009}.

\bibitem[Bottou et~al.(2018)Bottou, Curtis, and Nocedal]{BottCurtNoce18}
L.~Bottou, F.~E. Curtis, and J.~Nocedal.
\newblock Optimization methods for large-scale {M}achine {L}earning.
\newblock \emph{SIAM Review}, 60\penalty0 (2):\penalty0 223--311, 2018.
\newblock \doi{10.1137/16m1080173}.

\bibitem[Byrd et~al.(2012)Byrd, Chin, Nocedal, and Wu]{ByrdChinNoceWu12}
R.~H. Byrd, G.~M. Chin, J.~Nocedal, and Y.~Wu.
\newblock Sample size selection in optimization methods for {M}achine
  {L}earning.
\newblock \emph{Mathematical Programming}, 134\penalty0 (1):\penalty0 127--155,
  2012.
\newblock \doi{10.1007/s10107-012-0572-5}.

\bibitem[Carter(1991)]{Cart91}
R.~G. Carter.
\newblock On the global convergence of trust region algorithms using inexact
  gradient information.
\newblock \emph{{SIAM} Journal on Numerical Analysis}, 28:\penalty0 251--265,
  1991.
\newblock \doi{10.1137/0728014}.

\bibitem[Cartis and Scheinberg(2018)]{CartSche18}
C.~Cartis and K.~Scheinberg.
\newblock Global convergence rate analysis of unconstrained optimization
  methods based on probabilistic models.
\newblock \emph{Mathematical Programming}, 169\penalty0 (2):\penalty0 337--375,
  2018.

\bibitem[Cartis et~al.(2010)Cartis, Gould, and Toint]{CartGoulToin10}
C.~Cartis, N.~I.~M. Gould, and P.~L. Toint.
\newblock On the complexity of steepest descent, {N}ewton's and regularized
  {N}ewton's methods for nonconvex unconstrained optimization problems.
\newblock \emph{SIAM Journal on Optimization}, 20\penalty0 (6):\penalty0
  2833--2852, 2010.
\newblock \doi{10.1137/090774100}.

\bibitem[Cartis et~al.(2011{\natexlab{a}})Cartis, Gould, and
  Toint]{CartGoulToin11}
C.~Cartis, N.~I.~M. Gould, and P.~L. Toint.
\newblock Adaptive cubic regularisation methods for unconstrained optimization.
  {Part} {I}: motivation, convergence and numerical results.
\newblock \emph{Mathematical Programming}, 127\penalty0 (2):\penalty0 245--295,
  2011{\natexlab{a}}.
\newblock \doi{10.1007/s10107-009-0286-5}.

\bibitem[Cartis et~al.(2011{\natexlab{b}})Cartis, Gould, and
  Toint]{CartisGouldToint11-composite}
C.~Cartis, N.~I.~M. Gould, and P.~L. Toint.
\newblock On the {Evaluation} {Complexity} of {Composite} {Function}
  {Minimization} with {Applications} to {Nonconvex} {Nonlinear} {Programming}.
\newblock \emph{SIAM Journal on Optimization}, 21\penalty0 (4):\penalty0
  1721--1739, 2011{\natexlab{b}}.
\newblock \doi{10.1137/11082381X}.

\bibitem[Cartis et~al.(2012)Cartis, Gould, and Toint]{CartGoulToin12}
C.~Cartis, N.~I.~M. Gould, and P.~L. Toint.
\newblock Complexity bounds for second-order optimality in unconstrained
  optimization.
\newblock \emph{Journal of Complexity}, 28\penalty0 (1):\penalty0 93--108,
  2012.
\newblock \doi{10.1016/j.jco.2011.06.001}.

\bibitem[Cartis et~al.(2022)Cartis, Gould, and Toint]{CartGoulToin22}
C.~Cartis, N.~I.~M. Gould, and P.~L. Toint.
\newblock \emph{Evaluation Complexity of Algorithms for Nonconvex Optimization:
  Theory, Computation and Perspectives}.
\newblock Society for Industrial and Applied Mathematics, Philadelphia, PA,
  2022.
\newblock \doi{10.1137/1.9781611976991}.

\bibitem[Chen et~al.(2018)Chen, Menickelly, and Scheinberg]{ChenMeniSche18}
R.~Chen, M.~Menickelly, and K.~Scheinberg.
\newblock Stochastic optimization using a trust-region method and random
  models.
\newblock \emph{Mathematical Programming}, 169\penalty0 (2):\penalty0 447--487,
  2018.
\newblock \doi{10.1007/s10107-017-1141-8}.

\bibitem[Conn et~al.(2000)Conn, Gould, and Toint]{ConnGoulToin00}
A.~R. Conn, N.~I.~M. Gould, and P.~L. Toint.
\newblock \emph{Trust-{Region} {Methods}}.
\newblock {MOS}-{SIAM} {Series} on {Optimization}. {SIAM}, Philadelphia, PA,
  USA, 2000.
\newblock \doi{10.1137/1.9780898719857}.

\bibitem[Conn et~al.(2009{\natexlab{a}})Conn, Scheinberg, and
  Vicente]{ConnScheVice09}
A.~R. Conn, K.~Scheinberg, and L.~N. Vicente.
\newblock Global {Convergence} of {General} {Derivative}-{Free}
  {Trust}-{Region} {Algorithms} to {First}- and {Second}-{Order} {Critical}
  {Points}.
\newblock \emph{{SIAM} Journal on Optimization}, 20\penalty0 (1):\penalty0
  387--415, 2009{\natexlab{a}}.
\newblock \doi{10.1137/060673424}.

\bibitem[Conn et~al.(2009{\natexlab{b}})Conn, Scheinberg, and
  Vicente]{ConnScheVice09_introduction}
A.~R. Conn, K.~Scheinberg, and L.~N. Vicente.
\newblock \emph{Introduction to {Derivative}-{Free} {Optimization}}.
\newblock Society for Industrial and Applied Mathematics, USA,
  2009{\natexlab{b}}.

\bibitem[Curtis and Scheinberg(2020)]{CurtSche20}
F.~E. Curtis and K.~Scheinberg.
\newblock Adaptive {Stochastic} {Optimization}: {A} {Framework} for {Analyzing}
  {Stochastic} {Optimization} {Algorithms}.
\newblock \emph{IEEE Signal Processing Magazine}, 37\penalty0 (5):\penalty0
  32--42, 2020.
\newblock \doi{10.1109/MSP.2020.3003539}.

\bibitem[Curtis et~al.(2017)Curtis, Robinson, and
  Samadi]{CurtisRobinsonSamadi2017}
F.~E. Curtis, D.~P. Robinson, and M.~Samadi.
\newblock A {Trust} {Region} {Algorithm} with a {Worst}-{Case} {Iteration}
  {Complexity} of $\mathcal{O}(\epsilon^{-3/2})$ for {Nonconvex}
  {Optimization}.
\newblock \emph{Mathematical Programming}, 162\penalty0 (1–2):\penalty0
  1--32, 2017.
\newblock \doi{10.1007/s10107-016-1026-2}.

\bibitem[Curtis et~al.(2018)Curtis, Lubberts, and
  Robinson]{CurtisLubbertsRobinson2018}
F.~E. Curtis, Z.~Lubberts, and D.~P. Robinson.
\newblock Concise complexity analyses for trust region methods.
\newblock \emph{Optimization Letters}, 12\penalty0 (8):\penalty0 1713--1724,
  2018.
\newblock \doi{10.1007/s11590-018-1286-2}.

\bibitem[Curtis et~al.(2021)Curtis, Robinson, Royer, and
  Wright]{CurtisRobinsonRoyerWright2021}
F.~E. Curtis, D.~P. Robinson, C.~W. Royer, and S.~J. Wright.
\newblock Trust-{Region} {Newton}-{CG} with {Strong} {Second}-{Order}
  {Complexity} {Guarantees} for {Nonconvex} {Optimization}.
\newblock \emph{{SIAM} Journal on Optimization}, 31\penalty0 (1):\penalty0
  518--544, 2021.
\newblock \doi{10.1137/19M130563X}.

\bibitem[Ekeland(1974)]{Ekeland74}
I.~Ekeland.
\newblock On the variational principle.
\newblock \emph{Journal of Mathematical Analysis and Applications}, 47\penalty0
  (2):\penalty0 324--353, 1974.
\newblock \doi{10.1016/0022-247X(74)90025-0}.

\bibitem[Fan(2006)]{Fan06}
J.~Fan.
\newblock Convergence {Rate} of {The} {Trust} {Region} {Method} for {Nonlinear}
  {Equations} {Under} {Local} {Error} {Bound} {Condition}.
\newblock \emph{Computational Optimization and Applications}, 34\penalty0
  (2):\penalty0 215--227, 2006.
\newblock \doi{10.1007/s10589-005-3078-8}.

\bibitem[Fan and Lu(2015)]{FanLu15}
J.~Fan and N.~Lu.
\newblock On the modified trust region algorithm for nonlinear equations.
\newblock \emph{Optimization Methods and Software}, 30\penalty0 (3):\penalty0
  478--491, 2015.
\newblock \doi{10.1080/10556788.2014.932943}.
\newblock Publisher: Taylor \& Francis.

\bibitem[Fan and Pan(2011)]{FanPan11}
J.~Fan and J.~Pan.
\newblock An improved trust region algorithm for nonlinear equations.
\newblock \emph{Computational Optimization and Applications}, 48\penalty0
  (1):\penalty0 59--70, 2011.
\newblock \doi{10.1007/s10589-009-9236-7}.

\bibitem[Fan et~al.(2016)Fan, Pan, and Hongyan]{FanPanHong16}
J.~Fan, J.~Pan, and S.~Hongyan.
\newblock A retrospective trust region algorithm with trust region converging
  to zero.
\newblock \emph{Journal of Computational Mathematics}, 34\penalty0
  (4):\penalty0 421--436, 2016.
\newblock \doi{10.4208/jcm.1601-m2015-0399}.

\bibitem[Friedlander and Schmidt(2012)]{FrieSchm12}
M.~P. Friedlander and M.~Schmidt.
\newblock Hybrid deterministic-stochastic methods for data fitting.
\newblock \emph{SIAM Journal on Scientific Computing}, 34\penalty0
  (3):\penalty0 A1380--A1405, 2012.
\newblock \doi{10.1137/110830629}.

\bibitem[Garmanjani et~al.(2016)Garmanjani, Júdice, and
  Vicente]{GarmanjaniJudiceVicente16}
R.~Garmanjani, D.~Júdice, and L.~N. Vicente.
\newblock Trust-{Region} {Methods} {Without} {Using} {Derivatives}: {Worst}
  {Case} {Complexity} and the {NonSmooth} {Case}.
\newblock \emph{SIAM Journal on Optimization}, 26\penalty0 (4):\penalty0
  1987--2011, 2016.
\newblock \doi{10.1137/151005683}.

\bibitem[Gould et~al.(2005)Gould, Orban, Sartenaer, and
  Toint]{GouldOrbanSartenaerToint05}
N.~I.~M. Gould, D.~Orban, A.~Sartenaer, and P.~L. Toint.
\newblock Sensitivity of trust-region algorithms to their parameters.
\newblock \emph{4OR}, 3\penalty0 (3):\penalty0 227--241, 2005.
\newblock \doi{10.1007/s10288-005-0065-y}.

\bibitem[Grapiglia et~al.(2015)Grapiglia, Yuan, and Yuan]{GrapigliaYuanYuan15}
G.~N. Grapiglia, J.~Yuan, and Y.-x. Yuan.
\newblock On the convergence and worst-case complexity of trust-region and
  regularization methods for unconstrained optimization.
\newblock \emph{Mathematical Programming}, 152\penalty0 (1):\penalty0 491--520,
  2015.
\newblock \doi{10.1007/s10107-014-0794-9}.

\bibitem[Grapiglia et~al.(2016)Grapiglia, Yuan, and Yuan]{GrapigliaYuanYuan16}
G.~N. Grapiglia, J.~Yuan, and Y.-x. Yuan.
\newblock Nonlinear {Stepsize} {Control} {Algorithms}: {Complexity} {Bounds}
  for {First}- and {Second}-{Order} {Optimality}.
\newblock \emph{Journal of Optimization Theory and Applications}, 171\penalty0
  (3):\penalty0 980--997, 2016.
\newblock \doi{10.1007/s10957-016-1007-x}.

\bibitem[Gratton et~al.(2008)Gratton, Sartenaer, and
  Toint]{GrattonSartenaerToint08}
S.~Gratton, A.~Sartenaer, and P.~L. Toint.
\newblock Recursive {Trust}-{Region} {Methods} for {Multiscale} {Nonlinear}
  {Optimization}.
\newblock \emph{{SIAM} Journal on Optimization}, 19\penalty0 (1):\penalty0
  414--444, 2008.
\newblock \doi{10.1137/050623012}.

\bibitem[Gratton et~al.(2017)Gratton, Royer, Vicente, and
  Zhang]{GratRoyeViceZhan17}
S.~Gratton, C.~Royer, L.~N. Vicente, and Z.~Zhang.
\newblock Complexity and global rates of trust-region methods based on
  probabilistic models.
\newblock \emph{IMA Journal of Numerical Analysis}, 38\penalty0 (3):\penalty0
  1579--1597, 2017.
\newblock \doi{10.1093/imanum/drx043}.
\newblock eprint:
  https://academic.oup.com/imajna/article-pdf/38/3/1579/25170882/drx043.pdf.

\bibitem[Hei(2003)]{Hei03}
L.~Hei.
\newblock A self-adaptive trust region algorithm.
\newblock \emph{Journal of Computational Mathematics}, 21\penalty0
  (2):\penalty0 229--236, 2003.
\newblock Publisher: Institute of Computational Mathematics and
  Scientific/Engineering Computing.

\bibitem[Jin et~al.(2024)Jin, Scheinberg, and Xie]{JinScheinbergXie2024}
B.~Jin, K.~Scheinberg, and M.~Xie.
\newblock High probability complexity bounds for adaptive step search based on
  stochastic oracles.
\newblock \emph{SIAM Journal on Optimization}, 34\penalty0 (3):\penalty0
  2411--2439, 2024.
\newblock \doi{10.1137/22M1512764}.

\bibitem[Jin et~al.(2025)Jin, Scheinberg, and Xie]{JinScheinbergXie2025}
B.~Jin, K.~Scheinberg, and M.~Xie.
\newblock Sample complexity analysis for adaptive optimization algorithms with
  stochastic oracles.
\newblock \emph{Mathematical Programming}, 209\penalty0 (1):\penalty0 651--679,
  2025.
\newblock \doi{10.1007/s10107-024-02078-z}.

\bibitem[Levenberg(1944)]{Levenberg_1944}
K.~Levenberg.
\newblock A method for the solution of certain non-linear problems in least
  squares.
\newblock \emph{Quarterly of Applied Mathematics}, 2\penalty0 (2):\penalty0
  164--168, 1944.
\newblock Publisher: Brown University.

\bibitem[Marquardt(1963)]{Marquardt_1963}
D.~W. Marquardt.
\newblock An {Algorithm} for {Least}-{Squares} {Estimation} of {Nonlinear}
  {Parameters}.
\newblock \emph{Journal of the Society for Industrial and Applied Mathematics},
  11\penalty0 (2):\penalty0 431--441, 1963.
\newblock \doi{10.1137/0111030}.

\bibitem[Moré(1983)]{More83}
J.~J. Moré.
\newblock Recent {Developments} in {Algorithms} and {Software} for {Trust}
  {Region} {Methods}.
\newblock In A.~Bachem, B.~Korte, and M.~Grötschel, editors,
  \emph{Mathematical {Programming} {The} {State} of the {Art}: {Bonn} 1982},
  pages 258--287. Springer Berlin Heidelberg, Berlin, Heidelberg, 1983.
\newblock \doi{10.1007/978-3-642-68874-4_11}.

\bibitem[Moré and Sorensen(1983)]{MoreSorensen1983}
J.~J. Moré and D.~C. Sorensen.
\newblock Computing a {Trust} {Region} {Step}.
\newblock \emph{SIAM Journal on Scientific and Statistical Computing},
  4\penalty0 (3):\penalty0 553--572, 1983.
\newblock \doi{10.1137/0904038}.

\bibitem[Nesterov(2014)]{Nest14}
Y.~Nesterov.
\newblock \emph{Introductory Lectures on Convex Optimization: A Basic Course}.
\newblock Springer Publishing Company, Incorporated, 1st edition, 2014.
\newblock ISBN 1461346916.

\bibitem[Nesterov(2018)]{Nest18}
Y.~Nesterov.
\newblock \emph{Lectures on {Convex} {Optimization}}.
\newblock Springer {Optimization} and {Its} {Applications}. Springer Cham, 2nd
  edition, 2018.
\newblock ISBN 978-3-319-91577-7.
\newblock \doi{https://doi.org/10.1007/978-3-319-91578-4}.

\bibitem[Nesterov and Polyak(2006)]{NestPoly06}
Y.~Nesterov and B.~Polyak.
\newblock Cubic regularization of {Newton} method and its global performance.
\newblock \emph{Mathematical Programming}, 108\penalty0 (1):\penalty0 177--205,
  2006.
\newblock \doi{10.1007/s10107-006-0706-8}.

\bibitem[Nocedal and Wright(2006)]{NoceWrig06}
J.~Nocedal and S.~J. Wright.
\newblock \emph{Numerical {Optimization}}.
\newblock Springer {Series} in {Operations} {Research} and {Financial}
  {Engineering}. Springer New York, NY, New York, NY, USA, second edition,
  2006.
\newblock \doi{10.1007/978-0-387-40065-5}.

\bibitem[Paquette and Scheinberg(2020)]{PaquSche2020}
C.~Paquette and K.~Scheinberg.
\newblock A stochastic {L}ine-{S}earch method with expected complexity
  analysis.
\newblock \emph{SIAM Journal on Optimization}, 30\penalty0 (1):\penalty0
  349--376, 2020.
\newblock \doi{10.1137/18M1216250}.

\bibitem[Powell(2002)]{Powell2002}
M.~Powell.
\newblock {UOBYQA}: unconstrained optimization by quadratic approximation.
\newblock \emph{Mathematical Programming}, 92\penalty0 (3):\penalty0 555--582,
  2002.
\newblock \doi{10.1007/s101070100290}.

\bibitem[Powell(1970)]{Powe70}
M.~J.~D. Powell.
\newblock A {New} {Algorithm} for {Unconstrained} {Optimization}.
\newblock In J.~Rosen, O.~Mangasarian, and K.~Ritter, editors, \emph{Nonlinear
  {Programming}}, pages 31--65. Academic Press, Jan. 1970.
\newblock \doi{10.1016/B978-0-12-597050-1.50006-3}.

\bibitem[Powell(1975)]{Powe75}
M.~J.~D. Powell.
\newblock Convergence properties of a class of minization algorithms.
\newblock In O.~Mangasarian, R.~Meyer, and S.~Robinson, editors,
  \emph{Nonlinear {Programming} 2}, pages 1--27. Academic Press, Jan. 1975.
\newblock \doi{10.1016/B978-0-12-468650-2.50005-5}.

\bibitem[Powell(1984)]{Powe84}
M.~J.~D. Powell.
\newblock On the global convergence of trust region algorithms for
  unconstrained minimization.
\newblock \emph{Mathematical Programming}, 29\penalty0 (3):\penalty0 297--303,
  1984.
\newblock \doi{10.1007/BF02591998}.

\bibitem[Robbins and Monro(1951)]{RobbMonro51}
H.~Robbins and S.~Monro.
\newblock A stochastic approximation method.
\newblock \emph{The Annals of Mathematical Statistics}, 22\penalty0
  (3):\penalty0 400--407, 1951.

\bibitem[Steihaug(1983)]{Stei83}
T.~Steihaug.
\newblock The {Conjugate} {Gradient} {Method} and {Trust} {Regions} in {Large}
  {Scale} {Optimization}.
\newblock \emph{{SIAM} Journal on Numerical Analysis}, 20\penalty0
  (3):\penalty0 626--637, 1983.
\newblock \doi{10.1137/0720042}.

\bibitem[Toint(1979)]{Toin79}
P.~L. Toint.
\newblock On the {Superlinear} {Convergence} of an {Algorithm} for {Solving} a
  {Sparse} {Minimization} {Problem}.
\newblock \emph{SIAM Journal on Numerical Analysis}, 16\penalty0 (6):\penalty0
  1036--1045, 1979.
\newblock \doi{10.1137/0716076}.

\bibitem[Toint(1988)]{Toint88}
P.~L. Toint.
\newblock Global convergence of a a of trust-region methods for nonconvex
  minimization in hilbert space.
\newblock \emph{IMA Journal of Numerical Analysis}, 8\penalty0 (2):\penalty0
  231--252, 1988.
\newblock \doi{10.1093/imanum/8.2.231}.

\bibitem[Toint(2013)]{Toin13}
P.~L. Toint.
\newblock Nonlinear stepsize control, trust regions and regularizations for
  unconstrained optimization.
\newblock \emph{Optimization Methods and Software}, 28\penalty0 (1):\penalty0
  82--95, 2013.
\newblock \doi{10.1080/10556788.2011.610458}.
\newblock Publisher: Taylor \& Francis.

\bibitem[Wang and Yuan(2022)]{WangYuan2022}
X.~Wang and Y.~Yuan.
\newblock Stochastic trust-region methods with trust-region radius depending on
  probabilistic models.
\newblock \emph{Journal of Computational Mathematics}, 40\penalty0
  (2):\penalty0 294--334, 2022.
\newblock \doi{10.4208/jcm.2012-m2020-0144}.

\bibitem[Xu et~al.(2020)Xu, Roosta, and Mahoney]{XuRoosMaho20}
P.~Xu, F.~Roosta, and M.~W. Mahoney.
\newblock Newton-type methods for non-convex optimization under inexact
  {Hessian} information.
\newblock \emph{Mathematical Programming}, 184\penalty0 (1):\penalty0 35--70,
  2020.

\bibitem[Yao et~al.(2021)Yao, Xu, Roosta, and Mahoney]{YaoXuRoosMaho21}
Z.~Yao, P.~Xu, F.~Roosta, and M.~W. Mahoney.
\newblock Inexact {Nonconvex} {Newton}-{Type} {Methods}.
\newblock \emph{INFORMS Journal on Optimization}, 3\penalty0 (2):\penalty0
  154--182, 2021.

\bibitem[Yuan(2000{\natexlab{a}})]{Yuan2000}
Y.~Yuan.
\newblock On the truncated conjugate gradient method.
\newblock \emph{Mathematical Programming}, 87\penalty0 (3):\penalty0 561--573,
  2000{\natexlab{a}}.
\newblock \doi{10.1007/s101070050012}.

\bibitem[Yuan(1985)]{Yuan85}
Y.-x. Yuan.
\newblock Conditions for convergence of trust region algorithms for nonsmooth
  optimization.
\newblock \emph{Mathematical Programming}, 31\penalty0 (2):\penalty0 220--228,
  1985.
\newblock \doi{10.1007/BF02591750}.

\bibitem[Yuan(1998)]{Yuan98}
Y.-x. Yuan.
\newblock An example of non-convergence of trust region algorithms.
\newblock In Y.-x. Yuan, editor, \emph{Advances in {Nonlinear} {Programming}:
  {Proceedings} of the 96 {International} {Conference} on {Nonlinear}
  {Programming}}, pages 205--215. Springer US, Boston, MA, 1998.
\newblock \doi{10.1007/978-1-4613-3335-7\_9}.

\bibitem[Yuan(2000{\natexlab{b}})]{Yuan00}
Y.-x. Yuan.
\newblock A review of trust region algorithms for optimization.
\newblock In J.~M. Ball and J.~C.~R. Hunt, editors, \emph{{ICIAM99}:
  {Proceedings} of the {Fourth} {International} {Congress} on {Industrial} \&
  {Applied} {Mathematics} {Edinburgh}}, page~0. Oxford University Press, Dec.
  2000{\natexlab{b}}.
\newblock \doi{10.1093/oso/9780198505143.003.0023}.

\bibitem[Yuan(2015)]{Yuan15}
Y.-x. Yuan.
\newblock Recent advances in trust region algorithms.
\newblock \emph{Mathematical Programming}, 151\penalty0 (1):\penalty0 249--281,
  2015.

\bibitem[Yuan and Fan(2001)]{YuanFan01}
Y.-x. Yuan and J.~Fan.
\newblock A new trust region algorithm with trust region radius converging to
  zero.
\newblock In \emph{Proceedings of the 5th {International} {Conference} on
  {Optimization}: {Techniques} and {Applications}}, pages 786--794, 2001.

\end{thebibliography}

\appendix

\section{Example of a R-function} \label{appendix: example of R-function}

    As an illustration inspired from the example plot given in \cite{Hei03}, one may define \( R_{\eta_1}(t) \) to interpolate smoothly across the threshold $\eta_1$,
    \[
    R_{\eta_1}(t) =
    \begin{cases}
        \gamma_1 + (\gamma_2 - \gamma_1) \expo{\lambda_1 (t - \eta_1)}, & \text{if } t < \eta_1, \\[8pt]
        1 + \gamma_2 + \big(\gamma_3 - (1 + \gamma_2)\big) \left(1 - \expo{-\lambda_2 (t - \eta_1)}\right), & \text{if } t \geq \eta_1,
    \end{cases}
    \]
    where \( \lambda_1 > 0 \) and \( \lambda_2 > 0 \) control the exponential rates of increase and decay, respectively, and \(\gamma_1, \gamma_2\) are the parameters described above.

        \begin{figure}[H]
        \centering
        \begin{tikzpicture}[scale=0.7]
        \begin{axis}[
            width=12cm, height=8cm,
            xlabel={$t$}, ylabel={$R_{\eta}(t)$},
            xmin=-4, xmax=6, ymin=0, ymax=3.5,
            grid=major,
            legend pos=north west,
            legend style={font=\small},
        ]
            % Parameters
            \pgfmathsetmacro{\eta}{0.3} % Threshold parameter
            \pgfmathsetmacro{\beta}{0.1} % Lower limit for R_eta(t) as t -> -infinity
            \pgfmathsetmacro{\gammaone}{0.1} % Reduction factor for t < eta
            \pgfmathsetmacro{\gammatwo}{0.5} % Increase factor for t >= eta
            \pgfmathsetmacro{\lambdaone}{1.0} % Exponential growth rate for t < eta
            \pgfmathsetmacro{\lambdatwo}{1.0} % Exponential decay rate for t >= eta
            \pgfmathsetmacro{\M}{3.0} % Upper limit for R_eta(t) as t -> +infinity

            % Define the R_eta function
            \addplot[
            domain=-4:\eta,
            samples=200,
            thick,
            blue
            ] { \beta + (1 - \gammaone - \beta) * exp(\lambdaone * (x - \eta)) };

            \addplot[
                        domain=\eta:6,
                        samples=200,
                        thick,
                        blue
                        ] { 1 + \gammatwo + (\M - (1 + \gammatwo)) * (1 - exp(-\lambdatwo * (x - \eta))) };

            % Add vertical line at eta
            \addplot[
            red, dashed, thick
            ] coordinates {(\eta, 0) (\eta, 11)};

            % Add horizontal line at R_eta(t) = 1
            \addplot[black, dashed, thick] coordinates {(-4, 1) (6, 1)};

            % Add circle to highlight discontinuity
            \node[circle, draw=black, fill=black, inner sep=1pt] at (axis cs:\eta,{1 + \gammatwo}) {};
            \node[circle, draw=black, fill=white, inner sep=1pt] at (axis cs:\eta,{1 - \gammaone}) {};

        \end{axis}
        \end{tikzpicture}
        \caption{Plot of $R_{\eta_1}(t)$ with parameters $\eta = 0.3$, $\beta = 0.1$, $\gamma_1 = 0.1$, $\gamma_2 = 0.5$, $\lambda_1 = 1.0$, $\lambda_2 = 1.0$, and $M = 3.0$.}
        \label{fig: R_eta_1}
    \end{figure}

\section{Proof of Theorem~\ref{th: firt-order TR CV}} \label{appendix: proof of Theorem 1}

    \begin{proof}
        The following proof is largely inspired from \citet{NoceWrig06}.
        \begin{enumerate}[(i)]
            \item Suppose there are only finitely many successful iterations. 
                Then, there exists an iteration index \( k_0 \) such that for all \( k \geq k_0 \), the iterations are unsuccessful, i.e., \( x_{k+1} = x_k \), and \( \| g_{k_0} \| = \lim_{k \to \infty} \| g_k \| = \liminf_{k \to \infty} \| g_k \| =  0 \).
        
            \item  Now suppose there are infinitely many successful iterations, and assume by contradiction that $\lim_{k \to \infty} \Vert g_k \Vert \neq 0$. 
            This implies that $\limsup_{k \to \infty} \| g_k \| \geq \varepsilon$ for some $\varepsilon > 0$, and there exists a subsequence $\{x_{k_j}\}$ such that $\Vert g(x_{k_j}) \Vert \geq \varepsilon > 0$ for all $j$.
            Consider such an iteration $m \geq 0$ of the subsequence such that $\norm{g(x_m)} \geq \varepsilon$.
            When $x$ is sufficiently close to $x_m$, then
            $
                \| x - x_m \| \leq  \frac{\norm{g(x_m)}}{2L}.
            $
            Then, from AF.\ref{as: Lipschitz continuous gradient}, we have
            \begin{equation}
                    \| g(x) \| \geq \norm{g(x_m)} - \norm{g(x_m) - g(x)}  \geq \frac{1}{2}\norm{g(x_m)} \geq \frac{\varepsilon}{2} > 0.
                    \label{eqn: gradient norm lower bound}
            \end{equation}
            As $\liminf_{k \to \infty} \norm{g(x_k)} = 0$, we can consider the first iteration $l_m > m$ such that $\norm{g(x_{l_m})} < \frac{1}{2} \| g(x_m) \|$, meaning by contraposition of \eqref{eqn: gradient norm lower bound} that
            $
                \| x_{l_m} - x_m \| > \frac{1}{2 L} \| g(x_m) \| \geq \frac{\varepsilon}{2L}.
            $
            Then, from %the Cauchy decrease condition
            AA.\ref{as: Cauchy decrease} and AM.\ref{as: model hessian},  we have
            \[ 
                f(x_{m}) - f(x_{l_m}) \geq  \sum_{\underset{\{\rho_k \geq \eta\}}{{k=m}}}^{l_m - 1} \rho_k ( m_k(0) - m_k(s_k) ) \geq
                    \sum_{\underset{\{\rho_k \geq \eta\}}{{k=m}}}^{l_m - 1} \eta \dfrac{\mykappa{mdc}}{4}
                    \varepsilon \min\left\{ \Delta_k \, , \, \dfrac{\varepsilon}{2\mykappa{umh}} \right\}.
            \]
            \begin{enumerate}[a)]
                \item First, if for all $m \leq k \leq l_m - 1$, $\Delta_k \leq \frac{\varepsilon}{2 \mykappa{umh}}$. 
                Then, 
                \[
                f(x_{m}) - f(x_{l_m})  \geq \eta \dfrac{\mykappa{mdc}}{4}\varepsilon \sum_{\underset{x_k \neq x_{k+1}}{{k=m}}}^{l_m - 1} \Delta_k
                \geq \eta \dfrac{\mykappa{mdc}}{4}\varepsilon \norm{x_{l_m } - x_m}
                > \eta \dfrac{\mykappa{mdc}}{8L} \varepsilon^2.
                \]
                \item
                Else, if there exists $k_0 \in [m, l_m - 1]$ such that $\Delta_{k_0} > \frac{\varepsilon}{2 \mykappa{umh}}$. 
                Then, 
                $$
                f(x_{m}) - f(x_{l_m})  \geq \eta \dfrac{\mykappa{mdc}}{4}\varepsilon \sum_{\underset{x_k \neq x_{k+1}}{{k=m}}}^{l_m - 1}   \Delta_k
                \geq \eta \dfrac{\mykappa{mdc}}{4}\varepsilon  \Delta_{k_0} > \eta \dfrac{\mykappa{mdc}}{2\mykappa{umh}} \varepsilon^2
                \geq  \eta \dfrac{\mykappa{mdc}}{8\mykappa{umh}} \varepsilon^2.
                $$ 
            \end{enumerate}
            Therefore, in both cases, we have that
            $ 
            f(x_{m}) - f(x_{l_m}) \geq \frac{\eta}{8} \mykappa{mdc} \min \left\{ \dfrac{1}{L}, \dfrac{1}{\mykappa{umh}} \right\} \varepsilon^2 > 0
            $. \\
            This contradicts that 
            $ \sum_{m=0}^{+ \infty} f(x_{m}) - f(x_{l_m} ) \leq \sum_{k=0}^{+ \infty} f(x_{k}) - f(x_{k+1}) \leq f(x_0) - \mykappa{lbf} < +\infty$ by monotonic decrease of the sequence $\{f(x_k)\}$ and $f$ being bounded below by $\mykappa{lbf}$.
        \end{enumerate}
    \end{proof}

\end{document}